\numberwithin{equation}{section}
\newtheorem{theorem}{Theorem}[section]
\newtheorem{corollary}[theorem]{Corollary}
\newtheorem{lemma}[theorem]{Lemma}
\newtheorem{proposition}[theorem]{Proposition}
\newtheorem{remark}[theorem]{Remark}
\def\neweq#1{\begin{equation}\label{#1}}
\def\endeq{\end{equation}}
\newcommand{\R}{\mathbb{R}}
\newcommand{\om}{\Omega}
\newcommand{\eps}{\epsilon}
\newcommand{\beq}{\begin{equation}}
\newcommand{\eeq}{\end{equation}}
\newcommand{\beqs}{\begin{equation*}}
\newcommand{\eeqs}{\end{equation*}}
\newcommand{\beqn}{\begin{eqnarray}}
\newcommand{\eeqn}{\end{eqnarray}}
\newcommand{\beqns}{\begin{eqnarray*}}
\newcommand{\eeqns}{\end{eqnarray*}}
\newcommand{\n }{\nabla }
\renewcommand{\O}{\Omega}
\newcommand{\po}{\partial\Omega_{\lambda}}
\renewcommand{\(}{\left(}
\renewcommand{\)}{\right)}
\newcommand{\lm}{\lambda}
\begin{document}

\title[Concentration along geodesics for a nonlinear Steklov problem]{Concentration along geodesics for a nonlinear Steklov problem arising in corrosion modelling}

\author{Carlo D. Pagani}
\address{Carlo D. Pagani, Istituto Lombardo, via Borgonuovo 25, 20121 Milano, Italy  }
\email{carlo.pagani@polimi.it}

\author{Dario Pierotti}
\address{Dario Pierotti, Dipartimento di Matematica, Politecnico di Milano, Piazza Leonardo da Vinci 32, 20133 Milano, Italy  }
\email{dario.pierotti@polimi.it}

\author{Angela Pistoia}
\address{Angela Pistoia, Dipartimento di Scienze di Base e Applicate per l'Ingegneria,  Sapienza Universit\`a di Roma,
via Antonio Scarpa 16, 00161 Roma, Italy}
\email{angela.pistoia@uniroma1.it}

\author{Giusi Vaira}
\address{Giusi Vaira,  Dipartimento di Scienze di Base e Applicate per l'Ingegneria,  Sapienza Universit\`a di Roma,
via Antonio Scarpa 16, 00161 Roma, Italy}
\email{vaira.giusi@gmail.com}

\subjclass[2010]{35J65 (primary),   35J05, 58E05 (secondary)}
\keywords{Steklov problem,   concentration along geodesics, corrosion modelling}

\begin{abstract}
We consider the problem of finding pairs $(\lambda, \mathfrak{u})$, with $\lambda>0$ and  $\mathfrak{u}$ a harmonic function in a three dimensional  torus-like domain $\mathcal{D}$, satisfying the nonlinear boundary condition $\partial_{\nu}\mathfrak{u}=\lambda\,\sinh \mathfrak{u}$ on $\partial \mathcal{D}$. This type of boundary condition arises in corrosion modelling (Butler Volmer condition). We prove existence of solutions which concentrate along some geodesics of the boundary   $\partial \mathcal{D}$ as the parameter $\lambda$ goes to zero.
\end{abstract}

\date{}
\maketitle

\section{Statement of the problem and main results}

In mathematical modeling of electrochemical corrosion a suitably defined galvanic potential satisfies an elliptic equation  (namely, the Laplace or Poisson equation in simplest cases) in a given domain $\mathcal{D}$, whose boundary is partly electrochemically active and partly inert. In the inactive boundary region,
the current density flow is of course zero, but in the active part it is modeled (by interpolating experimental data) by a difference of two exponentials according to the so-called Butler-Volmer formula (see \cite{vogexiu} for a detailed discussion of the model). Then the resulting mathematical problem consists in solving an elliptic equation complemented with a boundary condition of Neumann type, namely

\begin{equation}
\label{voge}
\partial_{n}\mathfrak{u}(y) = \lambda\,\mu(y) \big [e^{\alpha \mathfrak{u}(y)}-e^{-(1-\alpha)\mathfrak{u}(y)}\big ],\quad\quad y\in\partial \mathcal{D}
\end{equation}
Here $n$ is the outward unit normal to $\partial \mathcal{D}$, $\mathfrak{u}$ is the surface potential, $\alpha\in (0,1)$ is a constant depending on the constituents of the electrochemical system, $\mu(y)$ is a non negative bounded function which distinguishes between the active and the inert boundary regions (typically $\mu(y)$ is the characteristic function of some subset of $\partial\mathcal{D}$) and $\lambda$ is a positive parameter.

Due to the exponential grow of the nonlinear boundary term, this problem has beeen studied (usually for Laplace or Poisson equations) in two dimensions:  see  \cite{vogexiu}, \cite{vogekav}, \cite{PP1}, \cite{PP2}.

In the physically relevant three dimensional case, little is known about existence and properties of solutions (see \cite{PP3}, \cite{ppvz}).

A case  which presents some interest for applications arises when the body $\mathcal{D}$ is a three dimensional annular shaped domain, namely it can be represented in the form
\begin{equation}
\label{dom}
\mathcal{D} =\Big\{(y_1,y_2,y_3)\,\in\,\R^3\Big |\, \big (\sqrt{y_1^2+y_2^2},y_3\big )\in\om \Big\}
\end{equation}
and $\om$ is a smooth bounded domain in $\R^2$ such that
\begin{equation}
\label{dom2d}
\overline\om \subset \big\{(x_1,x_2)\in\R^2\,\big |\, x_1>0 \big\}
\end{equation}
Clearly, the domain $\mathcal{D}$ is $\mathcal{G}$-invariant for the action of the group $\mathcal{G}$ given by
\begin{equation}
\nonumber
g(y_1,y_2,y_3)=(\tilde g(y_1,y_2), y_3)
\end{equation}
where $\tilde g\in \mathcal{O}(2)$
(the group of linear isometries of $\R^2$).

The chosen geometry aims at modellizing the corrosion of torus-like bodies. Then, we consider the boundary value problem
\begin{equation}
\left\{\begin{aligned}
&\Delta \mathfrak{u} = 0\quad {\rm in}\,\, \mathcal{D}\\
 & \partial_{n}\mathfrak{u}= \lambda \,\sinh \mathfrak{u}\quad {\rm on}\,\, \partial \mathcal{D}
\end{aligned}\right.
\label{nlbvp}
\end{equation}
Note that we have chosen in \eqref{voge} $\alpha=1/2$, $\mu\equiv 1$, and we wrote $\mathfrak{u}$ instead of $\mathfrak{u}/2$ for the \emph{harmonic} potential.

In such situation
it is natural to look for solutions which are $\mathcal{G}$-invariant, i.e. they are \emph{axially symmetric functions} of the form
\begin{equation}
\label{solsimm}
\mathfrak{u}(y_1,y_2,y_3)=u(x_1,x_2 )
\end{equation}
where $x_1=(y_1^2+y_2^2)^{1/2}$ and $x_2=y_3$.

Now an easy computation shows that \eqref{nlbvp} is equivalent to the following problem for $u$ :
\begin{equation}
\left\{\begin{aligned}
& \mathrm{div}\big (x_1\nabla u\big ) =0\quad {\rm in}\,\, \om\\
  &\partial_{\nu}u=\lambda \,\sinh u\quad {\rm on}\,\, \partial \om
\end{aligned}\right.\label{nl2dpb}
\end{equation}

Thus, we are led to study the more general anisotropic two-dimensional problem
\begin{equation}
\left\{\begin{aligned}
&\Delta_a u  = 0\quad {\rm in}\,\, \om\\
  &\partial_{\nu}u= \lambda \,\sinh u\quad {\rm on}\,\, \partial \om
\end{aligned}\right.\label{gen2dpb}
\end{equation}
with $a\in L^{\infty}(\om)$ such that
\begin{equation}
\label{conda}
0<a_0\le a(x_1,x_2)\le a_1<\infty\quad\quad \mathrm{a.e.}\,\,\mathrm{in}\,\,\om\,,
\end{equation}
$\lambda$ is a positive parameter and $\Delta_a u$ is defined as

\begin{equation}
\label{defa}
\Delta_a u=\frac{1}{a(x)}\mathrm{div}\big (a(x)\nabla u\big )=\Delta u+\nabla\log a(x)\cdot\nabla u
\end{equation}
for every $u\in H^1(\om)$.

We remark that problem \eqref{nl2dpb} corresponds to choosing $a(x_1,x_2)\equiv x_1$. \\

The main goal of the present paper is to extend to the anisotropic problem \eqref{gen2dpb} the principal results obtained for
$a\equiv 1$ both concerning existence of multiple solutions and the limiting behaviour of the solutions for $\lambda\to 0^+$.

\smallskip
In what follows we first (section $2$) prove existence of global multiple solutions of problem \eqref{gen2dpb} and, as a consequence, of problem \eqref{nlbvp}.
This is done by a mild modification of the variational approach used in \cite{PP2}. We prove the following result:

\begin{theorem}
\label{mainth} Let $\om\subset\R^2$ be a Lipschitz domain and let $a\in L^{\infty}(\om)$ such that
\eqref{conda} is satisfied.
Then for every $\lambda>0$  there exist infinitely many solutions (in $H^1(\om)$) of problem \eqref{gen2dpb}.
\end{theorem}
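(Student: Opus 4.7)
The plan is to realize weak solutions of \eqref{gen2dpb} as critical points of the even energy functional
\begin{equation*}
J_\lambda(u) = \frac{1}{2}\int_\om a(x)|\nabla u|^2\,dx - \lambda\int_{\partial\om} a(x)\bigl(\cosh u - 1\bigr)\,d\sigma
\end{equation*}
on $H^1(\om)$. The weight $a$ appears in the boundary term because $\Delta_a u = 0$ is equivalent to $\mathrm{div}(a\nabla u) = 0$, whose weak formulation reads $\int_\om a\,\nabla u\cdot\nabla v\,dx = \lambda\int_{\partial\om} a\,\sinh u\cdot v\,d\sigma$. Both integrals are well defined and of class $C^1$ in $u$ thanks to the two-dimensional Moser--Trudinger trace inequality, which guarantees $\int_{\partial\om}\cosh u\,d\sigma < \infty$ for every $u \in H^1(\om)$; hence $J_\lambda \in C^1(H^1(\om),\R)$ with critical points coinciding with weak solutions of \eqref{gen2dpb}.

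Since $\cosh$ is even, so is $J_\lambda$, and I would apply Rabinowitz's symmetric mountain pass theorem. The first structural step is the Palais--Smale condition: the hyperbolic inequality $t\,\sinh t \ge \mu(\cosh t - 1)$, valid for $|t|$ large and any $\mu > 2$, applied to $J_\lambda(u_n) - \frac{1}{\mu}\langle J'_\lambda(u_n), u_n\rangle$ yields a uniform bound on $\int_\om a|\nabla u_n|^2\,dx$ and on $\int_{\partial\om} a\,\cosh u_n\,d\sigma$ along every Palais--Smale sequence. A trace-Poincar\'e inequality then controls $\|u_n\|_{L^2(\om)}$, so $(u_n)$ is bounded in $H^1(\om)$. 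To pass to the limit in the exponential boundary term I would combine the compactness of the trace $H^1(\om) \hookrightarrow L^p(\partial\om)$ for every finite $p$ with a Vitali equi-integrability argument, justified by the uniform bound on $\int_{\partial\om}\cosh u_n$.

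For the geometry, denote by $0 = \mu_0 < \mu_1 \le \mu_2 \le \dots \to \infty$ the eigenvalues of the linearized weighted Steklov problem $\mathrm{div}(a\nabla \phi) = 0$ in $\om$, $\partial_\nu \phi = \mu\,\phi$ on $\partial\om$, with $L^2(\partial\om,\,a\,d\sigma)$-orthonormal eigenfunctions $\{\phi_k\}$. Choose $k_*$ with $\mu_{k_*} > \lambda$; on $X_{k_*} := \overline{\mathrm{span}\{\phi_j : j \ge k_*\}}$, which has finite codimension, the quadratic part of $J_\lambda$ is coercive and Moser--Trudinger absorbs the super-quadratic remainder, giving $J_\lambda \ge \alpha > 0$ on $X_{k_*} \cap \partial B_\rho$ for suitable $\alpha, \rho$. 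On the finite-dimensional $V_n := \mathrm{span}\{\phi_0, \ldots, \phi_{n-1}\}$, orthonormality of the boundary traces forces $v|_{\partial\om} \not\equiv 0$ for every nonzero $v \in V_n$; norm equivalence in finite dimension combined with the exponential growth of $\cosh$ then gives $J_\lambda(u) \to -\infty$ uniformly as $\|u\|_{H^1} \to \infty$ in $V_n$.

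These properties feed Rabinowitz's symmetric mountain pass theorem and produce an unbounded sequence of critical values, hence infinitely many distinct weak solutions of \eqref{gen2dpb}. The main obstacle is the Palais--Smale verification: the nonlinearity $\sinh u$ grows faster than any polynomial and sits at the borderline of Moser--Trudinger integrability on $\partial\om$, so the compactness step must carefully combine the Ambrosetti--Rabinowitz type growth with a boundary equi-integrability analysis. The anisotropy enters only mildly, since the weight $a$ multiplies both integrals and its uniform two-sided bounds \eqref{conda} preserve all the relevant estimates; this is presumably where the \emph{mild modification} of the argument in \cite{PP2} takes place.
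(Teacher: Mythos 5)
Your overall strategy --- treat weak solutions as critical points of the even functional $J_\lambda$, build a symmetric mountain-pass linking out of the weighted Steklov spectrum, and verify Palais--Smale --- is exactly the paper's approach, but you deviate from its proof in two concrete places. For Palais--Smale you invoke an Ambrosetti--Rabinowitz-type inequality $t\sinh t\ge\mu(\cosh t-1)$ (valid for $|t|$ large, any $\mu>2$) and the standard computation with $J_\lambda(u_n)-\tfrac1\mu\langle J'_\lambda(u_n),u_n\rangle$; the paper instead uses the \emph{global} pointwise inequality $4(\cosh u-1)\le u\sinh u+u^2$ in a normalize-and-contradict argument (set $t_m=\|z_m\|_a\to\infty$, $u_m=z_m/t_m$, show $u_m|_{\partial\om}\to 0$ a.e., then force $\|u_m\|_a\to0$, contradiction). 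Both routes close, but yours must absorb the error $o(1)\|u_n\|_{H^1}$ through a trace--Poincar\'e step, while theirs rests on the auxiliary a.e.\ trace convergence proved in \cite{PP1}. For the finite-dimensional part you argue $J_\lambda\to-\infty$ uniformly via compactness of the unit sphere of $V_n$ plus exponential growth of $\cosh$, which feeds directly into Rabinowitz's theorem; the paper proves the weaker statement that $E_\lambda$ is bounded above on $V^-$ (by a flow argument, differentiating along $t_j\mapsto t_je^{\lambda_j s}$), and then invokes the Bartolo--Benci--Fortunato version \cite{bbf} of the symmetric mountain pass, which is designed for exactly that weaker hypothesis. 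One small but genuine defect in your write-up: $X_{k_*}:=\overline{\mathrm{span}\{\phi_j:j\ge k_*\}}$ is \emph{not} of finite codimension in $H^1(\om)$, since the Steklov eigenfunctions span only the $a$-harmonic complement of $H^1_0(\om)$; you must take $X_{k_*}:=H^1_0(\om)\oplus\overline{\mathrm{span}\{\phi_j:j\ge k_*\}}$ (coercivity on the $H^1_0$ piece is trivial because the boundary trace vanishes there), exactly as the paper's $V^+$ is defined in \eqref{v2}.
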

Then, a simple Corollary states the existence of solutions of problem \eqref{nlbvp}:

\begin{corollary}
\label{coro1}
For every $\lambda>0$ there are infinitely many axially symmetric solutions to problem \eqref{nlbvp}.
\end{corollary}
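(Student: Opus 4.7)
The plan is to deduce the Corollary directly from Theorem \ref{mainth} by applying it to the specific weight $a(x_1,x_2) = x_1$, and then transferring the resulting planar solutions back to three dimensional axially symmetric solutions via the ansatz \eqref{solsimm}.

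The first step is to verify the hypotheses of Theorem \ref{mainth}. The weight is $a(x_1,x_2) = x_1$, which is certainly in $L^\infty(\om)$ since $\om$ is bounded. Thanks to \eqref{dom2d}, the closure $\overline\om$ is a compact subset of the open half plane $\{x_1 > 0\}$, so there exist constants $0 < a_0 \le a_1 < \infty$ with $a_0 \le x_1 \le a_1$ throughout $\om$; this is exactly condition \eqref{conda}. Assuming furthermore that $\om$ is Lipschitz (which is implicit in the smoothness assumption on $\om$ made in the construction of $\mathcal{D}$), Theorem \ref{mainth} provides infinitely many solutions $u \in H^1(\om)$ of problem \eqref{nl2dpb}.

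The second step is to go back from $u$ to $\mathfrak{u}$. For each such $u$, define $\mathfrak{u}$ on $\mathcal{D}$ by \eqref{solsimm}, namely $\mathfrak{u}(y_1,y_2,y_3) = u\bigl(\sqrt{y_1^2+y_2^2},\,y_3\bigr)$. The computation referenced in the text, reversing the reduction from \eqref{nlbvp} to \eqref{nl2dpb}, shows that harmonicity of $\mathfrak{u}$ in $\mathcal{D}$ is equivalent to $\mathrm{div}(x_1 \n u) = 0$ in $\om$, and that the outward normal derivative on $\partial\mathcal{D}$ matches the conormal derivative $\partial_\nu u$ on $\partial\om$ (the rotational component of the normal having zero contribution by axial symmetry). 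Thus $\mathfrak{u}$ is an axially symmetric solution of \eqref{nlbvp} in the appropriate weak sense.

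Finally, distinct solutions $u$ of \eqref{nl2dpb} produce distinct solutions $\mathfrak{u}$ of \eqref{nlbvp} (the map $u \mapsto \mathfrak{u}$ is clearly injective), so the infinitely many solutions provided by Theorem \ref{mainth} yield infinitely many $\mathcal{G}$-invariant solutions of \eqref{nlbvp}. There is no real obstacle here; the only mildly delicate point is the equivalence between \eqref{nlbvp} restricted to $\mathcal{G}$-invariant functions and \eqref{nl2dpb}, but this is essentially a change of variables into cylindrical coordinates and has already been asserted in the paragraph preceding the statement.
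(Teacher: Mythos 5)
Your proposal is correct and follows exactly the route the paper intends: apply Theorem \ref{mainth} with $a(x_1,x_2)=x_1$ (which satisfies \eqref{conda} thanks to \eqref{dom2d}), and then transfer the planar solutions of \eqref{nl2dpb} back to axially symmetric solutions of \eqref{nlbvp} via \eqref{solsimm}, using the equivalence established in the introduction. The paper simply states that the corollary ``easily follows'' from this discussion, so your write-up fills in precisely the steps that the authors left implicit.
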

Moreover, in subsection \ref{23} a first approach to the description of the behaviour of the solutions for $\lambda\to 0^+$ is considered. For $a\equiv 1$ this study was done in \cite{vogekav}; there the authors prove, for a bounded $\mathcal{C}^{2,\alpha}$ domain $\om\subset \R^2$, that all solutions have energies that are of order $\log(1/\lambda)$; the limit of the boundary flux along subsequences is a sum of Dirac masses located at a finite set of points, potentially accompanied by a regular part of definite sign. These results are a generalization of those obtained in \cite{BV} by studying the behaviour of explicit solutions of the problem in the unit disk. We show that the same kind of results hold for the anisotropic case, and we sketch the main arguments of the proofs.

In \cite{davila}, considering again the case $a\equiv 1$,  the authors prove that in any domain $\om$ there are at least two distinct families of solutions which exhibit exactly the qualitative behaviour of the explicit solutions found in \cite{BV}, namely with limiting boundary flux given by an array of Dirac masses with weight $2\pi$ and alternate signs. We will show in section \ref{3}, that, for the anisotropic problem \eqref{gen2dpb}, the situation is substantially the same as depicted in \cite{davila}. In that case the arguments used to extend the results of \cite{davila} to the anisotropic case are in some part different from those given in \cite{davila}.

\smallskip
In order to state our result let $x=(x_1,x_2)\in\Omega$, $y=(y_1,y_2)\in\partial\Omega$   and let $G_a(x, y)$ be the Green's function for the Neumann problem

\beq
\label{Greena}
\left\{
\begin{array}{lr}
-(\Delta_a)_x G_a(x, y)=0\qquad\mbox{in}\,\,\om\\\\
\displaystyle\frac{\partial G_a}{\partial \nu_x}=2\pi\delta_y(x)-\frac{2\pi}{\int_{\partial\om}a(x)}a(y)\qquad\quad\mbox{on}
\,\,\, \partial\om\\\\
\displaystyle\int_{\partial\om}G_a(x, y)=0
\end{array}
\right.
\eeq
and define $H_a(x, y)$ as the regular part of $G_a(x, y)$, namely
\beq\label{robina}
H_a(x, y)=G_a(x, y)-\log\frac{1}{|x-y|^2}.
\eeq

We say that $\xi\in \partial\Omega$ is a $C^1-$stable critical point of  $a |_{\partial\om}$
 if $\nabla_{{\partial\Omega}}a(\xi) =0$ and  the  local Brouwer degree
 $\mathrm{deg}\left(\nabla_{{\partial\Omega}}a ,  B(\xi,\rho) \cap \partial\Omega   ,0\right)$ is well defined (for $\rho$   small enough) and it is not zero.
It is easy to see that any  strict local minimum   point
  or   strict local maximum point or non degenerate critical point of $a |_{\partial\om}$   is a $C^1-$stable critical point.
\\

Now, the main result is the following
\begin{theorem}\label{principale}
Let $\om\subset\mathbb R^2$ be a smooth domain and $a\in C^1(\overline\Omega)$. Let  $\xi_1, \xi_2\in\partial\om$ be two different $C^1-$stable critical points of $a |_{\partial\om}$. Then there is $\lambda_0>0$ such that, for $0<\lambda<\lambda_0$, there is a sign-changing solution $u_\lambda $ of the problem \eqref{gen2dpb} of the form
\beq\label{solth}
u_\lambda(x)=\log\frac{2\mu_1}{|x-(\xi_1 +\lambda\mu_1\nu_1)|^2}-\log\frac{2\mu_2}{|x-(\xi_2 +\lambda\mu_2\nu_2)|^2}+O(1)
\eeq
where $\nu_1$ and $\nu_2$ denote the unit outer normals to $\partial\om$ at the point $\xi_1$ and $\xi_2$ respectively, and the parameters $\mu_1$ and $\mu_2$ are explicitly given by
$$ \mu_1= {1\over2}e^{H_a(\xi_1, \xi_1)-G_a(\xi_1, \xi_2)}, \ \mu_2= {1\over2}e^{H_a(\xi_2, \xi_2)-G_a(\xi_2, \xi_1)}.$$
In particular, the solution $u_\lambda$ concentrates positively and negatively at the points $\xi_1$ and $\xi_2$ respectively, as $\lambda$ goes to zero.
\end{theorem}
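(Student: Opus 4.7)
I would prove Theorem \ref{principale} by a Lyapunov--Schmidt finite-dimensional reduction around a two-bubble ansatz, following the strategy of \cite{davila} but with the anisotropic Green's function $G_a$ and regular part $H_a$ from \eqref{Greena}--\eqref{robina} playing the role of the usual Neumann Green's function. The building block is the elementary bubble: for $\xi\in\partial\om$ and $\mu>0$, set $\xi':=\xi+\lambda\mu\nu$ and
$$U_{\xi,\mu}(x):=\log\frac{2\mu}{|x-\xi'|^2}.$$
This function is harmonic and, in the half-plane tangent to $\partial\om$ at $\xi$, satisfies the model equation $\partial_\nu U_{\xi,\mu}=\lambda e^{U_{\xi,\mu}}$. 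To use it globally in $\om$, I would construct its anisotropic projection $PU_{\xi,\mu}$ by solving a suitable linear Neumann problem for $\Delta_a$, obtaining the expansion
$$PU_{\xi,\mu}(x)=U_{\xi,\mu}(x)+H_a(x,\xi)-\log(2\mu)+o(1).$$
The approximate solution matching \eqref{solth} is then $V_\lambda:=PU_{\xi_1,\mu_1}-PU_{\xi_2,\mu_2}$, producing a positive bubble at $\xi_1$ and a negative one at $\xi_2$.

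Next, I would search for a true solution of the form $u_\lambda=V_\lambda+\phi$ and linearize \eqref{gen2dpb} around $V_\lambda$. Writing $\sinh u=\tfrac12(e^u-e^{-u})$, near $\xi_1$ only the term $e^u$ is of order one (reproducing the Liouville-type local model whose kernel is spanned by the dilation $\partial_\mu U_{\xi_1,\mu_1}$ and the tangential translation $\partial_\xi U_{\xi_1,\mu_1}$), near $\xi_2$ only $-e^{-u}$ contributes, and away from the two concentration points the nonlinearity is exponentially small. I would prove a uniform invertibility estimate for the linearized operator on the orthogonal complement of the approximate four-dimensional kernel in weighted $L^\infty$ (or $H^1$) norms adapted to the bubble geometry. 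A contraction-mapping argument then yields a smooth remainder $\phi=\phi(\xi_1,\mu_1,\xi_2,\mu_2)$ of size $O(\lambda^\alpha)$ for some $\alpha>0$.

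The final step is the finite-dimensional reduction. Let $E_\lambda$ be the variational energy associated with \eqref{gen2dpb} and put $J_\lambda(\xi_i,\mu_i):=E_\lambda(V_\lambda+\phi)$; critical points of $J_\lambda$ correspond to true solutions. Using the bubble asymptotics together with the expansion of $PU_{\xi_i,\mu_i}$ through $H_a$ and $G_a$, each single-bubble contribution produces a term of the form $a(\xi_i)\bigl(-\log(2\mu_i)+H_a(\xi_i,\xi_i)\bigr)$, while the bubble--bubble interaction contributes a term proportional to $G_a(\xi_1,\xi_2)$ (with an explicit $a$-weight) plus a nonlinear $a(\xi_i)$-dependent factor. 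Solving the system $\partial_{\mu_i}J_\lambda=0$ at leading order gives algebraic equations whose unique positive solutions are precisely the $\mu_1,\mu_2$ of the theorem; substituting them back leaves a reduced functional in $(\xi_1,\xi_2)\in(\partial\om)^2$ whose leading part depends only on $a|_{\partial\om}$ evaluated at the two points. The $C^1$-stability assumption on $\xi_1$ and $\xi_2$, via persistence of the local Brouwer degree under $C^1$-small perturbations, then produces a critical point $(\xi_1^\lambda,\xi_2^\lambda)$ near $(\xi_1,\xi_2)$ for every sufficiently small $\lambda>0$.

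The main obstacle I expect is the sharp expansion of $J_\lambda$ in the anisotropic setting: one must track carefully how the weight $a$ enters the bubble projection, the Green's function, and the boundary integrals, and verify that the derivatives of the reduced functional with respect to $(\xi_1,\xi_2)$ reduce at leading order precisely to $\bigl(\nabla_{\partial\om}a(\xi_1),\nabla_{\partial\om}a(\xi_2)\bigr)$, so that $C^1$-stability of $a|_{\partial\om}$ is indeed the correct non-degeneracy condition. A secondary technical issue is the behaviour of $H_a$ near the diagonal, which requires $a\in C^1(\overline\om)$ and is what makes the explicit coefficients in the formulas for $\mu_1,\mu_2$ come out.
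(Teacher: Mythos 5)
Your overall framework is right: Lyapunov--Schmidt reduction around a two-bubble ansatz, with the anisotropic Green's function $G_a$ and its regular part $H_a$ replacing the classical Neumann Green's function, and with the $C^1$-stability of $a|_{\partial\om}$ entering through a degree argument for the reduced problem. You also correctly flag the central issue: explaining why $\nabla_{\partial\om}a$, rather than the Robin function of $G_a$, is the leading forcing that localizes $\xi_1,\xi_2$.

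Where you diverge from the paper is in the architecture of the reduction, and this is more than cosmetic. You propose a \emph{variational} reduction: keep $\mu_1,\mu_2$ as free parameters, form the reduced energy $J_\lambda(\xi_i,\mu_i)=E_\lambda(V_\lambda+\phi)$, solve $\partial_{\mu_i}J_\lambda=0$ to determine $\mu_i$, then hunt critical points in $(\xi_1,\xi_2)$; this gives a four-dimensional approximate kernel (one translation and one dilation per bubble). The paper instead \emph{fixes} $\mu_1,\mu_2$ at the outset by the formulas \eqref{sceltamuj} (this is what makes the error $\mathcal R$ be $O(\lambda^\alpha)$ in Lemma \ref{errore-lem}), uses only a three-dimensional approximate cokernel (the two translations $\chi_jZ_{1j}$ plus one glued dilation mode $\chi Z$), and works at the level of the equation rather than the energy: it introduces an extra scalar $\tau$ multiplying $Z$ and shows that the three Lagrange multipliers $c_0,c_1,c_2$ can be made to vanish — first $c_0$ by the choice of $\tau$ (Lemma \ref{lemma9}), then $c_1,c_2$ by the choice of $(\xi_1,\xi_2)$ via degree theory. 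The crucial computation is Lemma \ref{cru}: the dominant contribution to $c_j$ is the interior integral $I_2=\int_{\O_\lambda}\tau\,(\nabla a_\lambda\cdot\nabla Z)\,\chi_jZ_{1j}\,dy=\tau\lambda\bigl(-\pi\nabla_{\partial\om}a(\xi_j)+o(1)\bigr)$, a term that exists only because $\Delta_a$ is anisotropic and only because the dilation parameter $\tau$ is in play. This is how $\nabla_{\partial\om}a$ emerges; it is a PDE-interior effect, not a boundary-energy effect. Your energy route would have to reproduce this: the term $\tfrac12\int_\om a|\nabla u|^2$ localizes to $a(\xi_i)$ times a universal $\log(1/\lambda)$ constant near each bubble, so $\partial_{\xi_i}J_\lambda$ does indeed pick up $\nabla_{\partial\om}a(\xi_i)$ at leading order, but you would also need to verify that the competing terms coming from $\partial_{\xi_i}H_a(\xi_i,\xi_i)$ and $\partial_{\xi_i}G_a(\xi_i,\xi_j)$ (which are the \emph{dominant} $\xi$-dependence in the isotropic case of \cite{davila}) are genuinely lower order once the anisotropy is turned on. The paper avoids having to compare these directly by the choice of fixed $\mu_j$ and the $\tau Z$ mechanism, which forces all the other contributions $I_0,I_1,I_3,\dots,I_7$ to be $o(\tau\lambda)$. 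So your plan is a plausible alternative route, but you should be aware that you are trading the paper's direct multiplier computation for an energy expansion in which you will have to make the hierarchy $\nabla a \gg \nabla H_a,\nabla G_a$ explicit, and this is exactly where the technical content lies.
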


According to the previous discussion, the corresponding result for problem \eqref{nlbvp} reads as follows:
\begin{corollary}\label{principalegen}
Let $\om\subset\mathbb R^2$ be a smooth domain and $a\in C^1(\overline\Omega)$. Let  $\xi_1, \xi_2\in\partial\om$ be two different $C^1-$stable critical points  of $a (x)=x_1$ restricted on ${\partial\om}$. Then there exists $\lambda_0>0$ such that for any $\lambda\in(0, \lambda_0)$, problem \eqref{nlbvp} has a sign-changing axially symmetric solution ${\mathfrak u}_\lambda$ which concentrates positively and negatively along two geodesics  of the boundary $\partial\mathcal D$ which are the $\mathcal G-$orbits of $\xi_1 $ and
$\xi_2$ respectively, as $\lambda $ goes to zero.
\end{corollary}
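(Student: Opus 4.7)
The plan is to deduce Corollary \ref{principalegen} directly from Theorem \ref{principale} via the symmetry reduction already established in the introduction. First I would observe that, by the ansatz \eqref{solsimm}, the three-dimensional problem \eqref{nlbvp} restricted to $\mathcal{G}$-invariant functions is equivalent to the two-dimensional anisotropic problem \eqref{nl2dpb}, which is precisely the special case of \eqref{gen2dpb} with weight $a(x_1,x_2)=x_1$. The hypothesis \eqref{dom2d} gives $\overline\om\subset\{x_1>0\}$, so this $a$ lies in $C^1(\overline\om)$ and satisfies \eqref{conda}; moreover the $C^1$-stable critical points of $a|_{\partial\om}$ coincide with those of $x_1|_{\partial\om}$. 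Hence Theorem \ref{principale} applies, producing $\lambda_0>0$ and, for every $\lambda\in(0,\lambda_0)$, a sign-changing solution $u_\lambda$ of \eqref{gen2dpb} of the form \eqref{solth} that concentrates positively at $\xi_1$ and negatively at $\xi_2$.

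Next I would lift $u_\lambda$ to $\mathcal{D}$ by setting
\beqs
\mathfrak{u}_\lambda(y_1,y_2,y_3):=u_\lambda\bigl(\sqrt{y_1^2+y_2^2},\,y_3\bigr),
\eeqs
so that the equivalence recalled above guarantees that $\mathfrak{u}_\lambda$ is a weak (hence classical, by elliptic regularity) axially symmetric sign-changing solution of \eqref{nlbvp}. Under the change of variables $x_1=\sqrt{y_1^2+y_2^2}$, $x_2=y_3$, each boundary point $\xi_i=(\xi_i^{(1)},\xi_i^{(2)})\in\partial\om$ corresponds to its $\mathcal{G}$-orbit
\beqs
\Gamma_i:=\bigl\{y\in\R^3\,\big|\,\sqrt{y_1^2+y_2^2}=\xi_i^{(1)},\ y_3=\xi_i^{(2)}\bigr\}\subset\partial\mathcal{D},
\eeqs
a horizontal circle of radius $\xi_i^{(1)}>0$. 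By rotational invariance, the concentration of $u_\lambda$ at $\xi_i$ in the sense of \eqref{solth} immediately lifts to a concentration of $\mathfrak{u}_\lambda$ along $\Gamma_i$ with the matching sign.

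The only remaining and genuinely new step is to identify $\Gamma_1,\Gamma_2$ as geodesics of the surface of revolution $\partial\mathcal{D}$. I would invoke Clairaut's relation: on a surface of revolution a parallel is a geodesic if and only if the generating curve has a vertical tangent at the corresponding height, equivalently if the radius function $r$ (here $r=x_1$) is critical with respect to the arclength parameter on the generating curve $\partial\om$. This condition is exactly $\nabla_{\partial\om}\,x_1(\xi_i)=0$, which holds by the $C^1$-stability hypothesis. I expect this geometric identification to be the only mildly delicate point of the argument; everything else is a routine transfer between the symmetric and reduced pictures.
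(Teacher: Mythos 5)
Your proposal is correct and follows exactly the route the paper takes (implicitly, since the authors merely state the corollary as a consequence of the equivalence \eqref{solsimm}, \eqref{nl2dpb} and of Theorem \ref{principale}): reduce to the two-dimensional anisotropic problem with $a(x_1,x_2)=x_1$, apply Theorem \ref{principale}, and lift back to $\mathcal{D}$. Your appeal to Clairaut's relation to identify the $\mathcal{G}$-orbits as geodesics of the surface of revolution $\partial\mathcal{D}$ supplies a justification that the paper leaves unstated, and it is correct: a parallel is a geodesic precisely when the radius function $x_1$ is critical along the profile curve $\partial\om$, which is exactly the hypothesis $\nabla_{\partial\om}x_1(\xi_i)=0$.
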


 The proof of Theorem \ref{principale} relies
  on a very well-known finite dimensional procedure and it is carried out in Section \ref{3}. We shall omit many details of this proof because they can be found up to minor modifications in \cite{davila}, where problem \eqref{gen2dpb} has been studied with $a(x)\equiv1.$ We only compute what cannot be deduced from known results. It   must be mentioned that in our case the anisotropic term $a$ allows to solve the reduced problem by finding the location of the concentration points in
terms of critical points of $a$ itself.

Our result suggests  that  it should be possible to find  solutions which concentrate along two geodesics of the boundary of a more general torus-like 3-dimensional domain which is not necessarily axially symmetric.

\section{Existence and multiplicity of solutions}
\label{2}
The content of this section almost repeats the arguments developed in \cite{PP2}: recall that we have chosen the weight function $\mu(x)$ appearing in \eqref{voge} identically $1$, and that we deal with the operator $\Delta_a$ instead of the Laplacean; therefore many statemets are simple rephrasements of the corresponding statements given in \cite{PP2} and are not given here in full details.
Let us first recall that the approach to the non linear problem relies on the solution
of a related linear Steklov eigenvalue problem on the
boundary. We summarize here (without proofs) the crucial
results about this problem.\\
Let $\om\subset \R^N$ be a bounded Lipschitz domain and consider the following
linear Steklov eigenvalue problem in $H^1(\om)$:
\begin{equation}
\left\{\begin{aligned}
& \mathrm{div}\big (a(x)\nabla u\big )= 0\quad {\rm in}\,\, \om\\
 & \partial_{\nu}u= \lambda \, u\quad {\rm on}\,\, \partial \om
\end{aligned}\right.\label{linpb}
\end{equation}
It is easily seen that for $\lambda>0$ the solutions to \eqref{linpb} belong to the subspace
$H^1_{a}\subset H^1(\om)$ defined as follows:

\begin{equation}
\label{sub1}
H^1_{a}\equiv \Big \{u\in H^1(\om),\quad \int_{\partial\om}a\,u = 0\Big\}.
\end{equation}
It can be shown (by a classical \textit{reductio ad absurdum} argument, see e.g. \cite{PP1}) that in $H^1_{a}$  the Dirichlet norm $\int_{\om}|\nabla u|^2$ is
equivalent to the $H^1$ norm and that \eqref{linpb} is
equivalent to the following variational problem:

\noindent Find $u\in H^1_{a}$, $u\neq 0$, such that
\begin{equation}
\label{eqvar}
\int_{\om}a\,\nabla u\,\nabla \varphi=\lambda\int_{\partial\om}a\,u\varphi
\end{equation}
holds for every $\varphi\in H^1$.
Furthermore, the expression
\begin{equation}
\label{eqnorm}
\|u\|_a^2=\int_{\om}a\,|\nabla u|^2+\Bigl (\int_{\partial\om}a\,u\Bigr
)^2
\end{equation}
defines an equivalent norm in $H^1(\om)$. We will consider the scalar product in
$H^1(\om)$ associated to this equivalent norm; by a slight adaptation of the proof in \cite{PP1}, theorem $2.1$, we have the following
result :

\begin{proposition}
\label{mt} Problem \eqref{eqvar} has infinitely many
eigenvalues
$$ 0<\lambda_1\le\lambda_2\le ...$$
each of finite multiplicity and such that
$|\lambda_n|\to +\infty$.
The eigenvalue $\lambda_0=0$ corresponds to the constant solutions of the
homogeneous Neumann problem. Moreover, we can take all the eigenfunctions $v_n$, $n\ge 0$  orthogonal and normalized with
respect to the scalar product associated  to the equivalent norm \eqref{eqnorm} and the following decomposition holds:
\begin{equation}
\label{orthdec}
H^1=H^1_0\oplus V_{a}
\end{equation}
where the subspace $V_{a}$ is spanned by the eigenfunctions $v_n$ (note that $V_{a}$ contains the subspace of the constant functions).
\end{proposition}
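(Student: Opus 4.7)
The plan is to recast the weak eigenvalue problem \eqref{eqvar} as an eigenvalue problem for a compact self-adjoint operator on $H^1_a$ and then apply the spectral theorem; this mirrors the argument of \cite{PP1}, Theorem~2.1, with $a$ replacing the constant weight.

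First I would endow $H^1_a$ with the inner product $(u,v)_a:=\int_{\om} a\,\nabla u\cdot\nabla v$, which is equivalent to the $H^1$ norm by the Poincar\'e-type inequality already recalled. The bilinear form $b(u,v):=\int_{\partial\om}a\,uv$ is continuous on $H^1(\om)\times H^1(\om)$ and, thanks to the compact trace embedding $H^1(\om)\embed L^2(\partial\om)$ available on Lipschitz domains, completely continuous. Riesz representation with respect to $(\cdot,\cdot)_a$ then produces a bounded operator $S:H^1_a\to H^1_a$ satisfying $(Su,v)_a=b(u,v)$; complete continuity of $b$ makes $S$ compact, while the symmetry and non-negativity of $b$ make $S$ self-adjoint and positive semi-definite. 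On $H^1_a$ the variational identity \eqref{eqvar} is equivalent to $u=\lambda\,Su$, so the Steklov eigenvalues are the reciprocals of the nonzero eigenvalues of $S$.

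Second, the Hilbert--Schmidt spectral theorem furnishes a nonincreasing sequence $\mu_n\to 0^+$ of eigenvalues of $S$, each of finite multiplicity, together with an orthonormal (in $(\cdot,\cdot)_a$) system $\{v_n\}_{n\ge1}$ of eigenfunctions spanning $(\ker S)^\perp$. Setting $\lambda_n:=1/\mu_n$ yields $0<\lambda_1\le\lambda_2\le\cdots\to+\infty$, strict positivity coming from the fact that $b$ is positive definite on $(\ker S)^\perp$. Since $a\ge a_0>0$ almost everywhere, $Su=0$ forces $u|_{\partial\om}=0$, so $\ker S=H^1_0(\om)$. A constant function $v_0$, normalized in $\|\cdot\|_a$, is the eigenfunction at $\lambda_0=0$ (corresponding to the homogeneous Neumann problem), which I append to the family $\{v_n\}_{n\ge1}$.

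To establish the decomposition \eqref{orthdec} I would split an arbitrary $u\in H^1(\om)$ as $u=(u-c)+c$ with $c=(\int_{\partial\om}a\,u)/(\int_{\partial\om}a)$ constant, so that $u-c\in H^1_a$, and then apply the $S$-eigenexpansion to $u-c$ to write it as the sum of an element of $H^1_0(\om)$ and one of $\overline{\mathrm{span}}\{v_n:n\ge1\}$. The required orthogonality with respect to $\langle\cdot,\cdot\rangle_a$ (rather than just $(\cdot,\cdot)_a$) comes for free: for $w\in H^1_0(\om)$ the boundary term in $\langle\cdot,\cdot\rangle_a$ vanishes against any $v\in V_a$ because $w|_{\partial\om}=0$, and what remains coincides with $(\cdot,\cdot)_a$-orthogonality inside $H^1_a$. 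The only genuinely nontrivial ingredients are the compactness of the trace and the Poincar\'e-type inequality on $H^1_a$ (both valid on Lipschitz domains when $a\ge a_0>0$); beyond these, the statement is a direct invocation of the spectral theorem.
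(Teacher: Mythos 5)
Your argument is correct and is essentially the same compact-operator/spectral-theorem approach that the paper invokes by citing \cite{PP1}, Theorem~2.1: you build the compact, self-adjoint, nonnegative solution operator $S$ on $H^1_a$, identify $\ker S=H^1_0(\om)$ via $b(u,u)=\int_{\partial\om}a\,u^2=0$, and read off the eigenvalues and the orthogonal decomposition from the spectral theorem. The handling of the added rank-one term in $\langle\cdot,\cdot\rangle_a$ (vanishing because either $\nabla v_0=0$ or the boundary integral of an $H^1_0$ or $H^1_a$ function is zero) is exactly the point worth spelling out, and you did.
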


\begin{remark}
\label{reg}
Global regularity of the eigenfunctions depends on the
weight $a(x)$ and on the regularity of the boundary $\partial\om$.
It can be shown that on a  Lipschitz domain $\om$
the eigenfunctions are bounded and continuous on
$\overline\om$.
\end{remark}

\subsection{Main estimates}
\label{21}

Let us consider now the non linear problem \eqref{gen2dpb};
we define the \emph{even functional}
\begin{equation}
\label{funz}
E_{\lambda}(u)=\frac{1}{2}\int_{\om}a\,|\nabla u|^2-\lambda\int_{\partial\om}a\, \big (\cosh u-1\big )
\end{equation}
where $u\in H^1(\om)$.
Note that
\begin{equation}
\label{deco}
\cosh u-1=\frac{u^2}{2}+u^4 h(u)
\end{equation}
where, for every $q>1$,
$$ h:\,H^1(\om)\rightarrow L^q(\partial\om)$$ is \emph{bounded} (see \cite{vogekav}, lemma 2.1).

Define further
\begin{equation}
\label{defS} S_R = \{u\in H^1:\,\,\|u\|_a=R\};
\end{equation}
where $\|\,\|_a$ is the equivalent norm defined in \eqref{eqnorm}.
Then, we have
\begin{lemma}
\label{link1} For every $\lambda>0$, there exists
$R>0$ and a closed subspace $V^+\subseteq H^1(\om)$ with \allowbreak codim\nobreak \,$V^+\nobreak < \nobreak\infty$
such that
$$E_{\lambda}(u)\ge c_0>0,$$ for every $u\in S_R\cap V^+$.
\end{lemma}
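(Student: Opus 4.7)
The plan is to use the Steklov eigenvalue decomposition from Proposition \ref{mt} to isolate a large subspace of $H^1(\Omega)$ on which the indefinite quadratic part of $E_\lambda$ is coercive, and then show that the quartic remainder is absorbed for small sphere radius. Since $\lambda_n \to +\infty$, I would pick an integer $N \geq 1$ with $\lambda_N > \lambda$ and set
$$V^+ := H^1_0(\Omega)\,\oplus\,\overline{\mathrm{span}\{v_n : n \geq N\}}.$$
Thanks to \eqref{orthdec}, the topological complement of $V^+$ in $H^1(\Omega)$ is the span of the finitely many eigenfunctions $v_0,\dots,v_{N-1}$, so $V^+$ is a closed subspace with $\mathrm{codim}\, V^+ = N$.

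For the main estimate I would decompose any $u \in V^+$ as $u = u_0 + w$ with $u_0 \in H^1_0(\Omega)$ and $w = \sum_{n\geq N} c_n v_n$. Testing the weak formulation \eqref{eqvar} against $u_0$ gives $\int_\Omega a\,\nabla v_n \cdot \nabla u_0 = \lambda_n \int_{\partial\Omega} a\,v_n u_0 = 0$, so the cross term vanishes; since moreover $u_0$ has zero trace and the non-constant $v_n$ satisfy $\int_{\partial\Omega} a\,v_n = 0$, one gets $\|u\|_a^2 = \int_\Omega a|\nabla u|^2$. The orthonormality of the $v_n$ in the scalar product induced by \eqref{eqnorm} together with the identity $\int_{\partial\Omega} a\,v_n v_m = \delta_{nm}/\lambda_n$ (obtained from \eqref{eqvar} with $\varphi = v_m$, diagonalising within each eigenspace if needed) yield
$$\int_{\partial\Omega} a\,u^2 = \sum_{n \geq N} \frac{c_n^2}{\lambda_n} \leq \frac{1}{\lambda_N}\sum_{n \geq N} c_n^2 \leq \frac{1}{\lambda_N}\,\|u\|_a^2,$$
so the quadratic part of $E_\lambda$ is bounded below by $\tfrac{1}{2}(1 - \lambda/\lambda_N)\|u\|_a^2$ on $V^+$.

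The remainder is controlled via \eqref{deco}. Picking any $q > 1$, Hölder's inequality, the trace embedding $H^1(\Omega) \hookrightarrow L^{4q/(q-1)}(\partial\Omega)$, and the boundedness of $h : H^1(\Omega) \to L^q(\partial\Omega)$ give
$$\Big|\lambda\int_{\partial\Omega} a\,u^4 h(u)\Big| \leq C\,\lambda\,\|u\|_a^4$$
with $C$ independent of $u$ on a fixed ball. Hence on $S_R \cap V^+$,
$$E_\lambda(u) \geq \tfrac{1}{2}(1 - \lambda/\lambda_N)R^2 - C\lambda R^4,$$
which is bounded below by some $c_0 > 0$ as soon as $R > 0$ is chosen small enough. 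No serious obstacle is expected: the only point to check carefully is the orthogonality bookkeeping between $H^1_0(\Omega)$ and the eigenfunctions and among the $v_n$ (also within repeated eigenspaces), which however is guaranteed by the variational characterisation \eqref{eqvar}.
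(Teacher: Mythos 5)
Your proof is correct and follows essentially the same strategy as the paper: split $E_\lambda$ via \eqref{deco}, choose $V^+ = H^1_0(\Omega)\oplus\overline{\operatorname{span}\{v_n : n\geq N\}}$ with $\lambda_N>\lambda$ so that the quadratic part is coercive on $V^+$ via the spectral bound $\int_{\partial\Omega}a\,u^2\le \lambda_N^{-1}\int_\Omega a|\nabla u|^2$, bound the quartic remainder by $C\lambda R^4$ using the boundedness of $h$, and take $R$ small. The only difference is that you carry out explicitly the orthogonality bookkeeping between $H^1_0(\Omega)$ and the eigenfunctions, which the paper leaves implicit.
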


\begin{proof}
By \eqref{deco} we have
\begin{equation}
\label{funz1}
E_{\lambda}(u)\ge \frac{1}{2}\int_{\om}a\,|\nabla u|^2-\frac{\lambda}{2}\int_{\partial\om}a\, u^2
-{\lambda}\int_{\partial\om}a\,|u|^{4}h(u),
\end{equation}
and for every $q>1$, $p=q/(q-1)$, the integral in the last term can be bounded as follows
\begin{equation}
\label{est1}
\Big|\int_{\partial\om}a\,|u|^{4}h(u)\Big |
\le \|a\|_{L^{\infty}(\partial\om)}\|h(u)\|_{L^{q}(\partial\om)}
\Bigl (\int_{\partial\om}|u|^{4p}\Bigr )^{1/p}
\le C\,\|u\|_a^{4}=C\,R^{4},
\end{equation}
Let us now consider the quadratic part of the functional. Since the sequence
of positive eigenvalues $\lambda_n$ of the linear problem is
unbounded, there exist non negative integers $k$, $r$, such that
\begin{equation}
\label{ass3}
\lambda_k\le\lambda<\lambda_{k+r}
\end{equation}
Then, we set
\begin{equation}
\label{v2} V^+ =H^1_0\oplus{\rm span}_{n\ge k+r}\,\{v_n\},
\end{equation}
For every $u\in S_R\cap V^+$ we get
\begin{equation}
\label{est2}
\frac{1}{2}\int_{\om}a\,|\nabla u|^2-\frac{\lambda}{2}\int_{\partial\om}a\, u^2
\ge \frac{1}{2}\Bigl (1-\frac{\lambda}{\lambda_{k+r}}  \Bigr )\int_{\om}a\,|\nabla
 u|^2=\frac{1}{2}\Bigl (1-\frac{\lambda}{\lambda_{k+r}}  \Bigr )R^2
\end{equation}
Then, the lemma follows by taking $R$ small enough.
\end{proof}
We are now going to construct closed, \emph{finite dimensional} subspaces,
$V^-\subset H^1(\om)$ such that:
\begin{itemize}
  \item dim $V^->$ codim $V^+$;
  \item $E_{\lambda}(u)\le c_{\infty}<\infty$ for every $u\in V^-$
\end{itemize}

To this aim, let $v_{n_i}$, $1\le i\le l$ be \emph{any} finite sequence
of $l$ eigenfunctions, with $l>$ codim $V^+$, corresponding to the eigenvalues
\begin{equation}
\label{leigenv}
\lambda_{n_1}\le\lambda_{n_2}\le...\le\lambda_{n_l}.
\end{equation}
Let us define:
\begin{equation}
\label{v1-}
V^-={\rm span}_{1\le i\le
l}\,\{v_{n_i}\},
\end{equation}
The next lemma provides the key estimates at infinity on the functional
\eqref{funz}.
\begin{lemma}
\label{link2} Let  $V^-$ be defined by
\eqref{v1-}. Then, for every $\lambda>0$ we have $E_{\lambda}(u)<0$ for any $u\in V^-$ with
large enough norm. As a consequence, there exists $c_{\infty}<\infty$ such that
$$E_{\lambda}(u)\le c_{\infty}\quad\quad\forall \,u\in V^-$$
\end{lemma}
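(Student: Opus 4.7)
The plan is to exploit finite-dimensionality of $V^-$ together with an exponential lower bound for the boundary integral of $\cosh u$, which will dominate the quadratic Dirichlet term as $\|u\|_a\to\infty$. First I would combine the elementary bound $\cosh u\ge \tfrac12 e^{|u|}$ with Jensen's inequality applied to the convex function $\exp$ and the probability measure $a(y)\,d\sigma(y)/|\partial\om|_a$ (where $|\partial\om|_a=\int_{\partial\om}a$) to get, for every $u\in H^1(\om)$,
\begin{equation*}
\int_{\partial\om}a\,(\cosh u-1)\ \ge\ \frac{|\partial\om|_a}{2}\exp\!\left(\frac{1}{|\partial\om|_a}\int_{\partial\om}a\,|u|\right)\ -\ |\partial\om|_a.
\end{equation*}
On the other hand, by the very definition \eqref{eqnorm} of $\|\cdot\|_a$ we have $\tfrac12\int_\om a\,|\nabla u|^2\le \tfrac12 \|u\|_a^2$. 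Thus the whole argument reduces to showing that on $V^-$ the quantity $\int_{\partial\om}a\,|u|$ grows at least linearly in $\|u\|_a$.

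Next, I would verify that $u\mapsto \int_{\partial\om}a\,|u|$ is a norm on $V^-$. Since $V^-$ is finite-dimensional, it is enough to check that $u\in V^-$ and $u|_{\partial\om}=0$ imply $u=0$. Writing $u=\sum_{i=1}^l c_i v_{n_i}$, Proposition \ref{mt} (together with the standard orthogonality of Steklov eigenfunctions in $L^2(\partial\om;a\,d\sigma)$, which follows from \eqref{eqvar} and simultaneous diagonalization on eigenspaces) yields $\int_{\partial\om}a\,u^2=\sum_i c_i^2\int_{\partial\om}a\,v_{n_i}^2$, and each factor $\int_{\partial\om}a\,v_{n_i}^2$ is strictly positive because $\lambda_{n_i}>0$ forces $v_{n_i}$ to be non-constant and nontrivial on the boundary. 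Hence $u|_{\partial\om}=0$ forces every $c_i=0$. Equivalence of norms on the finite-dimensional space $V^-$ then gives a constant $c_1>0$ such that
\begin{equation*}
\int_{\partial\om}a\,|u|\ \ge\ c_1\,\|u\|_a\qquad \forall\,u\in V^-.
\end{equation*}

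Combining these ingredients, for every $u\in V^-$,
\begin{equation*}
E_\lm(u)\ \le\ \frac{1}{2}\|u\|_a^2\ -\ \frac{\lm|\partial\om|_a}{2}\,\exp\!\left(\frac{c_1}{|\partial\om|_a}\,\|u\|_a\right)\ +\ \lm|\partial\om|_a,
\end{equation*}
and the right-hand side tends to $-\infty$ as $\|u\|_a\to\infty$. In particular $E_\lm(u)<0$ for $u\in V^-$ of sufficiently large norm. Since $E_\lm$ is continuous on the finite-dimensional space $V^-$ and goes to $-\infty$ at infinity, it attains a maximum $c_\infty<\infty$ on $V^-$, proving the claim.

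I expect the main (non-routine) point to be the verification that $\int_{\partial\om}a\,|u|$ is indeed a norm on $V^-$: this is where one must invoke the Steklov structure to exclude the pathological possibility that a nontrivial linear combination of eigenfunctions vanishes on $\partial\om$. Everything else—Jensen's inequality, the lower bound $\cosh u\ge\frac12 e^{|u|}$, and the exponential-versus-polynomial comparison—is essentially mechanical once this norm property is in hand.
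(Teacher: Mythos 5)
Your proof is correct, but it takes a genuinely different route from the paper's. The paper works directly with the eigenvalue structure: writing $u=\sum_i t_iv_{n_i}$, it uses the variational equation \eqref{eqvar} tested against $\sinh u$ to identify the derivative of $E_{\lambda}|_{V^-}$ along the curves $t_i\mapsto t_ie^{\lambda_i s}$ with $\int_{\om}a|\nabla u|^2\bigl[\lambda_{n_l}-\lambda\cosh u\bigr]$, invokes a lemma from \cite{PP1} to show this is eventually negative, and then integrates along these trajectories; the case $\lambda_{n_1}=0$ requires a separate argument (carried out in the appendix). You instead obtain an explicit exponential lower bound for the boundary term via $\cosh u\ge\frac12 e^{|u|}$ and Jensen's inequality, and reduce everything to the fact that $u\mapsto\int_{\partial\om}a|u|$ is a norm on the finite-dimensional space $V^-$, hence comparable to $\|u\|_a$ there. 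Your identification of the crux is right: the only structural input is that no nontrivial element of $V^-$ has vanishing trace, which you correctly extract from the $L^2(\partial\om;a\,d\sigma)$-orthogonality of the eigenfunctions (automatic across distinct eigenvalues by \eqref{eqvar}, and arrangeable within eigenspaces since $\|\cdot\|_a$-orthogonality on $H^1_a$ coincides with boundary orthogonality) together with $\int_{\partial\om}a\,v_{n_i}^2=\lambda_{n_i}^{-1}\int_{\om}a|\nabla v_{n_i}|^2>0$. One small point: your justification of the positivity of $\int_{\partial\om}a\,v_{n_i}^2$ is phrased only for $\lambda_{n_i}>0$, whereas \eqref{leigenv} allows $\lambda_{n_1}=0$, i.e.\ the constant eigenfunction; in that case the positivity and the boundary orthogonality to the remaining eigenfunctions (which lie in $H^1_a$, see \eqref{sub1}) are immediate, so your argument actually absorbs the degenerate case that the paper must treat separately. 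What your approach buys is a self-contained, quantitative bound $E_{\lambda}(u)\le\frac12\|u\|_a^2-c\lambda e^{c_1\|u\|_a}+C\lambda$ with no external references; what the paper's approach buys is consistency with the machinery of \cite{PP1,PP2}, which is reused elsewhere in their framework.
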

\begin{proof}
We first assume $\lambda_{n_1}>0$. For notational
simplicity, from now on we set $v_{n_i}=v_i$,
$\lambda_{n_i}=\lambda_i>0$ ($1\le i\le l$).
Thus, we can write any $u\in V^-$ in the form
$$u=\sum_{i=1}^l t_i v_i$$
Recalling remark \ref{reg}, $u$ is a bounded continuous function on
$\om$, so that $\sinh u\in H^1(\om)$ and  by proposition
\ref{mt} the variational equation
\begin{equation}
\label{eqvaruj}
\int_{\om}a\,\nabla v_j\,\nabla
\sinh u=\lambda_j\int_{\partial\om}a\,v_j\, \sinh u
\end{equation}
holds for every $j$, $1\le j\le l$.
Multiplying by $t_j$ and summing up from $j=1$ to $l$, we find
\begin{equation}
\label{eqvarsum}
\int_{\om}a\, |\nabla u |^2 \cosh(u)=\sum_{j=1}^l\lambda_j t_j\int_{\partial\om}a\,v_j\,\sinh u
\end{equation}
On the finite dimensional space $V-$ the functional \eqref{funz} takes the form
$$E_{\lambda}(u)\equiv f(t_1,...,t_l)=\sum_{i,j=1}^l \frac{t_it_j}{2}\int_{\om}a\,\nabla v_i\cdot\nabla v_j
-\lambda\int_{\partial\om}a\,\big ( \cosh u-1\big )$$
\begin{equation}
\label{ftk}
=\frac{1}{2}\sum_{i=1}^l t_i^2-\lambda\int_{\partial\om}a\,\big ( \cosh u-1\big ),
\end{equation}
where we used orthogonality and normalization of $v_i$ with respect to the
inner product defined by the equivalent norm \eqref{eqnorm}.
Then, we have
$$\Bigl (\sum_{j=1}^l\lambda_j t_j\partial_{t_j}
\Bigr)f(t_1,...,t_l)=\sum_{j=1}^l \lambda_j t_j^2
-\lambda \sum_{j=1}^l\lambda_j t_j\int_{\partial\om}a\,v_j\, \sinh u$$
\begin{equation}
\label{derdirf}
\le
\lambda_l\sum_{j=1}^l  t_j^2-\lambda\int_{\om}a\, |\nabla u |^2\cosh u=
\int_{\om} a\, |\nabla u |^2\Bigl [{\lambda_l}-\lambda\,\cosh u\Bigr ],
\end{equation}
Now, it can be proved (see \cite{PP1}, lemma 3.5) that the last term is strictly negative for $\|u\|_a=\sqrt{t_1^2+...t_l^2}$ large enough.
But the first term is the
derivative of the function $f$ along the curves
$$t_1=c_1 e^{\lambda_1 s},...,t_l=c_l e^{\lambda_l
s}\quad\quad s\in \R$$
(orthogonal to the hypersurfaces $\lambda_1t^2_1+...+\lambda_lt_l^2=$ constant); hence, for large $\sqrt{t_1^2+...t_l^2}$, the function $f$ is \emph{strictly
decreasing} along these curves.

We conclude that $f(u)<0$ for $u\in V^-$ with $\|u\|_a$ large enough; since $f$
is continuous and $V^-$ has finite dimension, we have
$$\sup_{u\in V^-}f(u)=c_{\infty}<\infty$$
We are left to show that  we may allow
$\lambda_{n_1}=0$ in \eqref{leigenv}. This can be proved by the same arguments as in the proof of lemma $3.5$ of \cite{PP2}.
\end{proof}

\subsection{Proof of theorem \ref{mainth}}
\label{22}

To prove
existence and even multiplicity of solutions to problem \eqref{gen2dpb} for every
$\lambda>0$, we will apply the Symmetric
Mountain Pass Lemma (see \cite{struwe} thm. 6.3); thus, we need to show that the functional
\eqref{funz}
satisfies the Palais-Smale condition; to this aim
the following estimate plays a key role:
\begin{equation}
\label{condps2bb}
    4(\cosh u-1)\le u\sinh u +u^2
\end{equation}

\begin{proposition}
\label{PS2} Let $z_m\in H^1(\om)$ be a sequence such that
$E_{\lambda}(z_m)\rightarrow c$ and $E_{\lambda}'(z_m)\rightarrow 0$ in
$H^1(\om)'$. Then, the sequence $z_m$ is bounded
and the functional \eqref{funz} satisfies the Palais-Smale condition.
\end{proposition}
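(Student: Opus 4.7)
The plan is to split the argument into two phases: first showing a Palais--Smale sequence $(z_m)$ is bounded in $H^1(\om)$, then extracting a strongly convergent subsequence.

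\textbf{Boundedness.} First I would test the derivative against $z_m$ itself to obtain
\begin{equation*}
E'_\lambda(z_m)[z_m] = \int_\om a|\nabla z_m|^2 - \lambda\int_{\partial\om} a\, z_m \sinh z_m = o(\|z_m\|_a),
\end{equation*}
and combine it with the energy bound $E_\lambda(z_m) = c + o(1)$ via the combination $4E_\lambda(z_m) - E'_\lambda(z_m)[z_m]$. Applying the pointwise inequality \eqref{condps2bb} this yields
\begin{equation*}
\int_\om a|\nabla z_m|^2 \le 4c + \lambda\int_{\partial\om} a\, z_m^2 + o(\|z_m\|_a) + o(1).
\end{equation*}
To absorb the boundary term, the plan is to use the Steklov spectral decomposition of Proposition \ref{mt}: choose $N$ with $\lambda_N > \lambda$ and split $z_m = P_N z_m + Q_N z_m$, where $P_N$ denotes the orthogonal projection onto the span of the first $N$ eigenfunctions. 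On the orthogonal complement, the variational characterization of $\lambda_N$ gives $\int_{\partial\om} a(Q_N z_m)^2 \le \lambda_N^{-1}\int_\om a|\nabla z_m|^2$, which is strictly absorbable into the gradient term on the left. The finite-dimensional remainder $P_N z_m$ is then controlled using Lemma \ref{link2} (the functional is asymptotically negative on finite-dimensional subspaces spanned by eigenfunctions) combined with the boundedness of $E_\lambda(z_m)$.

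\textbf{Strong convergence.} Given boundedness, along a subsequence $z_m \rightharpoonup z$ weakly in $H^1(\om)$. Since $\om \subset \R^2$, Rellich--Kondrachov and the trace embedding yield $z_m \to z$ strongly in $L^q(\om)$ and $L^q(\partial\om)$ for every finite $q$. The Moser--Trudinger inequality on the boundary, applied to a bounded sequence in $H^1$, then produces $\sinh z_m \to \sinh z$ in $L^q(\partial\om)$. Testing $E'_\lambda(z_m) - E'_\lambda(z)$ against $z_m - z$ finally gives
\begin{equation*}
\int_\om a|\nabla(z_m-z)|^2 = \lambda\int_{\partial\om} a\,(\sinh z_m - \sinh z)(z_m - z) + o(1),
\end{equation*}
and the boundary integral vanishes in the limit by the previous step, giving strong convergence in the Dirichlet seminorm, hence in $H^1(\om)$.

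\textbf{Main obstacle.} The delicate step will be the boundedness argument for arbitrary $\lambda > 0$: controlling the boundary $L^2$ contribution $\int_{\partial\om} a\, z_m^2$ requires separating the high-mode part, where the Steklov variational characterization directly absorbs the term into the gradient, from the finite-dimensional low-mode part, where the coercivity-at-infinity of Lemma \ref{link2} is needed. It is precisely for this combination that the inequality \eqref{condps2bb} is tailored, since it converts the exponential nonlinearity into a term containing a quadratic boundary integral on which the spectral split can act.
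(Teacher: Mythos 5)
Your strong-convergence step is essentially the same as the paper's (weak convergence plus a compactness argument on the boundary nonlinearity, using the boundedness of $h$ from \cite{vogekav}), but your proposed route to \emph{boundedness} is genuinely different from the paper's — and, as written, it has a gap that I don't see how to close.

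The paper argues boundedness by contradiction: assuming $t_m:=\|z_m\|_a\to\infty$, one normalizes $u_m=z_m/t_m$, uses the fact (cited from \cite{PP1}, Prop.~4.2) that along a subsequence the boundary traces satisfy $u_m|_{\partial\om}\to 0$ a.e., and then combines the conditions $E_\lambda(z_m)\to c$ and $E'_\lambda(z_m)u_m=o(1/t_m)$ together with \eqref{condps2bb} to force $\int_\om a|\nabla u_m|^2\to 0$, contradicting $\|u_m\|_a=1$. The a.e. vanishing of the rescaled boundary trace is the essential input; nothing plays that role in your sketch.

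Your plan instead aims to get an inequality of the form
\begin{equation*}
\int_\om a|\nabla z_m|^2 - \lambda\int_{\partial\om}a\,z_m^2 \le 4c + o(\|z_m\|_a) + o(1),
\end{equation*}
split $z_m$ into a high Steklov-mode part (absorbed by the gap $\lambda<\lambda_{N+1}$) and a low-mode part $P_N z_m$, and control $P_N z_m$ via Lemma~\ref{link2}. That last step does not work as stated. Lemma~\ref{link2} only says that $E_\lambda$ is bounded \emph{above} (and eventually negative) on the finite-dimensional span of low eigenfunctions; it gives no bound of $\|P_N z_m\|_a$ in terms of an upper bound on the energy. Moreover, $E_\lambda(z_m)$ being bounded does not tell you anything about $E_\lambda(P_N z_m)$, since the nonlinear boundary term $\int_{\partial\om}a(\cosh z_m -1)$ does not decouple across the orthogonal splitting. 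After absorbing the high modes you are left with $\sum_{j\le N}t_j^2(1-\lambda/\lambda_j)$, which is \emph{negative} on the modes with $\lambda_j<\lambda$, so the left-hand side of your inequality can become arbitrarily negative along $P_N z_m$, and the bound is vacuous exactly in the directions you need to control. Something beyond the quadratic split is required; the paper's remedy is the normalization and the a.e. decay of the normalized trace, which kills the troublesome quantity $\int_{\partial\om}a\,u_m^2$. If you want to make your version rigorous you would need a replacement for that ingredient, not merely Lemma~\ref{link2}.

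One more minor point: in the strong-convergence part, the claim that Moser--Trudinger on the boundary gives $\sinh z_m\to\sinh z$ in $L^q(\partial\om)$ needs an extra argument (e.g.\ Vitali plus uniform integrability coming from the exponential inequality); the paper instead invokes the compactness of the Nemytskii-type operator $T$ from \cite{vogekav}. This is a technicality, not a gap.
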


\begin{proof}
Assume by contradiction (considering a
subsequence if necessary) that $\|z_m\|_a\to +\infty$ and define
$t_m=\|z_m\|_a$, $u_m=t_m^{-1}z_m$. Substituting in the condition
$E_{\lambda}'(z_m)u=o(1)\|u\|_a$, we get

\begin{equation}
\label{succvm}
\int_{\om}a\,\nabla u_m\nabla u-\lambda \int_{\partial\om}a\,  \frac{\sinh(t_m u_m)}{t_m}u=o(1)\|u\|_a/t_m
\end{equation}
Since $u_m$ is bounded in $H^1(\om)$, there is a subsequence (still denoted by
$u_m$) such that $u_m$ converges weakly in $H^1(\om)$
and $u_m|_{\partial\om}$ converges strongly in $L^2(\partial\om)$; it can be proved (see \cite{PP1}, proposition 4.2) that
$u_m|_{\partial\om}\to 0$ a.e.


By choosing $u=u_m$ in \eqref{succvm} we get
\begin{equation}
\label{succtm1}
\int_{\om}a\,|\nabla u_m|^2-\lambda
\int_{\partial\om}a\,  \frac{\sinh(t_m u_m)}{t_m} u_m=o(1/t_m),
\end{equation}
On the other hand, from
$E_{\lambda}(z_m)\rightarrow c$ we also get
\begin{equation}
\label{succH}
\frac{1}{2}\int_{\om}a\,|\nabla u_m|^2-\lambda
\int_{\partial\om}a\, \frac{\cosh(t_m u_m)-1}{t^2_m}={ O}(1/t_m^2)
\end{equation}
By comparison of
\eqref{succtm1} and \eqref{succH} and by taking account of \eqref{condps2bb} we find
$$0=\lambda \int_{\partial\om}a\,
\Bigl[2\frac{\cosh(t_m u_m)-1}{t^2_m}-\frac{\sinh(t_m u_m)}{t_m} u_m\Bigr ]+o(1/t_m)\le$$
$$
\le -2\lambda\int_{\partial\om}a\, \frac{\cosh(t_m u_m)-1}{t^2_m}+\lambda \int_{\partial\om}a\, u_m^2+o(1/t_m)
$$
By recalling that $u_m\to 0$ in $L^2(\partial\om)$ we finally get
\begin{equation}
\label{succomp}
0\le -2\lambda\int_{\partial\om}a\, \frac{\cosh(t_m u_m)-1}{t^2_m}+o(1)
\end{equation}
By the above relation and again by \eqref{succH} we conclude
$$\int_{\om}a\,|\nabla u_m|^2\rightarrow 0$$
so that $\|u_m\|_a\rightarrow 0$,
thus contradicting $\|u_m\|_a=1$.
Then, the norm sequence $\|z_m\|_a$ is bounded and the same holds for $\|z_m\|$. We can write
$$z_m=c_m+\tilde z_m$$ where $c_m$ is a bounded sequence and $\tilde z_m$ is bounded in
$H^1_{a}(\om)$ (see definition \eqref{sub1}).

Now, the linear map $L:H^1_{a}(\om)\rightarrow H^1(\om)'$
$$L(u)\varphi=\int_{\om}a\,\nabla u\nabla \varphi$$ is boundedly invertible (by
the Lax-Milgram theorem) while the operator
$$T(u)\varphi =\int_{\partial\om}a\,\sinh u \varphi$$
maps bounded sets in $H^1(\om)$ to
relatively compact sets in $H^1(\om)'$ (the result follows by an obvious extension of the arguments in
\cite{vogekav}, Lemma 2.1 and Lemma 2.2). By standard results \cite{struwe} Proposition 2.2, it follows that $\tilde z_m$ is relatively compact in
$H^1_{a}(\om)$; then, by the above decomposition, the same holds for $z_m$ in $H^1(\om)$.
\end{proof}

\emph{{Proof of Theorem \ref{mainth}}:}
By
lemmas \ref{link1} and \ref{link2}, for any positive integer $m$ there exist
two closed subspaces $V^+$, $V^-$ of $H^1(\om)$ with dim$V^--$ codim$V^+=m$,
and positive constants $R$, $c_0$, $c_{\infty}$ (the last one depending on $m$)
such that:

$$a) \quad E_{\lambda}(u)\ge c_0\quad \forall u\in V^+,\quad
      \|u\|_a=R;\quad\quad b)\quad E_{\lambda}(u)\le c_{\infty}\quad\forall u\in V^-.$$

Then, by theorem 2.4 of \cite{bbf}, the functional $E_{\lambda}$ possesses at
least $m$ distinct pairs of critical points, corresponding to critical levels
$c_k(\lambda)$, $k=1,2,...,m$, given by
\begin{equation}
\label{minmaxlev}
c_k(\lambda)=\inf_{A\in\Sigma_k}\sup_{u\in A}E_{\lambda}(u)
\end{equation}
where $\Sigma_k$ is the set of closed, symmetric sets $A\subset H^1(\om)$ such that $\gamma^*(A)\ge k$ and
$\gamma^*$ is the \emph{pseudo-index} related to the Krasnoselski genus $\gamma$ and to the subset $S_{R}\cap V^+$ (see
\cite{bbf}, definition $2.8$).

Moreover,
$$c_0\le c_1\le c_2\le...\le c_m\le c_{\infty}.$$
Since this conclusion holds for arbitrary $m$, we get infinitely many critical
points; hence, problem \eqref{gen2dpb} has infinitely many solutions in
$H^1(\om)$. By standard regularity results \cite{cherrier}, if $\om$ is smooth
and $a\in \mathcal{C}^{\infty}(\overline\om)$, we have $u\in
\mathcal{C}^{\infty}(\overline\om)$.

\begin{remark}
In the degenerate case $c_k=...=c_{k+r}=c$ (with $k\ge 1$ and
$k+r\le m$) it was shown in \cite{bbf} that $\gamma(K_c)\ge r+1\ge 2$, where $K_c$
is the set of critical points at level $c$;
since a finite set (not containing the origin) has genus $1$, it follows that
$E_{\lambda}$ has infinitely many critical points at level $c$.
\end{remark}

\begin{remark}
The results of existence and multiplicity obtained in
\cite{vogekav} correspond to the case $a=1$ of Theorem \ref{mainth}  above.
\end{remark}

From the discussion given in the introduction, the existence of solutions of problem \eqref{nlbvp} stated in Corollary \ref{coro1} easily follows.

\subsection{Estimates and limits for the variational solutions}
\label{23}

As discussed in the introduction, we now prove lower and upper bounds for the variational solutions obtained in the previous subsection; more precisely, we show that a branch of solutions corresponding to any of the critical levels $c_k(\lambda)$  blows up in energy (as well as in the Dirichlet seminorm) at the rate $\log(1/\lambda)$ for $\lambda\to 0^+$, while the corresponding normal currents stay bounded in $L^1(\partial\om)$.

We first establish the lower bounds; by observing that a solution $u$ to \eqref{gen2dpb} satisfies
$$0=\int_{\partial\om} a\,\partial_{\nu}u=\lambda\int_{\partial\om} a\,\sinh u$$
we may write $u=u^0+s$, $s\in \R$, where $u^0$ solves the problem
\begin{equation}
\left\{\begin{aligned}
& \Delta_a u^0  = 0\quad {\rm in}\,\, \om\\
&  \partial_{\nu}u^0= \lambda \,\sinh (u^0+s)\quad {\rm on}\,\, \partial \om
\end{aligned}\right.\label{gen2dpb0}
\end{equation}
and
$$\int_{\partial\om} a \, u^0=0,\quad\quad \int_{\partial\om} a\,\sinh (u^0+s)=0$$
By elementary calculations, the second identity yields
\begin{equation}
\label{suzero}
s=s(u^0)\equiv \frac{1}{2}\log\frac{\int_{\partial\om} a\,e^{-u^0}}{\int_{\partial\om} a\,e^{u^0}}
\end{equation}

Now, following \cite{vogekav} and by recalling the bound \eqref{conda}, one first proves the inequality
$$|s(u^0)|\le C_1+C_2\|\nabla u^0\|^2$$
(with positive constants $C_1$, $C_2$ depending only on $\om$ and on $a$) and subsequenly the following \emph{lower estimates} for the solutions to \eqref{gen2dpb}
\begin{equation}
\label{lowest}
E_{\lambda}(u)\ge A\log(\frac{1}{\lambda})-B,\quad\quad \|\nabla u\|^2 \ge A\log(\frac{1}{\lambda})-B
\end{equation}
where $E_{\lambda}$ is the energy functional defined in \eqref{funz} and the positive constants $A$, $B$ are independent of $\lambda$ and $u$.

By considering the \emph{variational solutions} given by theorem \ref{mainth}, we now provide upper estimates of the critical values $c_k(\lambda)$ \eqref{minmaxlev}. We first remark that \emph{any} finite dimensional subspace $V$ with $\mathrm{dim}\,V>\mathrm{codim} V^+$ ($V^+$ being the subspace defined in \eqref{v2}) satisfies $V\in \Sigma_k$ for $k\le \mathrm{dim}\,V-\mathrm{codim} V^+$ (this is related to the so-called \emph{intersection lemma}, see \cite{struwe}, lemma $6.4$ and the proof of theorem 2.4 of \cite{bbf}).
Moreover, since for $\lambda\to 0^+$ we can take $V^+=H^1_a$, that is the subspace of codimension $1$ defined in \eqref{sub1},
we may assume $\mathrm{dim}\,V=k+1$.

Now, let us choose $k+1$ distinct points on the boundary $\partial\om$ and, for $\eps>0$, define
$$V_{\eps}\equiv \mathrm{span}\,\{-\log(\eps^2+|x-\sigma_j|^2),\quad j=1,2,...,k+1\}$$
By the estimates of lemmas $3.4$ and $3.5$ in \cite{vogekav} (with obvious modifications due to the expression \eqref{funz} of $E_{\lambda}$) one finds, for $\lambda$ sufficiently small and by taking $\eps$ of order $\lambda$ :
\begin{equation}
\nonumber
\max_{u\in V_{\eps}}E_{\lambda}(u)\le C_*\log(\frac{1}{\lambda})
\end{equation}
for some constant $C_*$ depending only on $k$, $a$ and $\om$. Since by the previous remark we have $V_{\eps}\in \Sigma_k$, the bound
\begin{equation}
\label{uppest}
c_k({\lambda})\le C_*\log(\frac{1}{\lambda})\
\end{equation}
follows.

Finally, the estimate \eqref{uppest} together with an elementary lemma in integration theory (lemma $3.6$ of \cite{vogekav})
yield the above mentioned bound on the normal current:
\begin{proposition}
Let $\lambda>0$ be small enough so that \eqref{uppest} holds. For $k\ge 1$, let $u_{k, \lambda}$ be the variational solutions obtained in theorem \ref{mainth} with $E_\lambda(u_k)=c_k({\lambda})$. Then, there exists a constant $D_*$ depending only on $k$, $a$ and $\om$, such that
\begin{equation}
\label{currest}
\int_{\partial\om}\Big |\frac{\partial u_{k, \lambda}}{\partial\nu} \Big |=
\lambda\int_{\partial\om}\big |\sinh(u_{k, \lambda})\big |\le D_*
\end{equation}
\end{proposition}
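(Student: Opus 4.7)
The plan is to combine the upper bound $c_k(\lambda)\le C_*\log(1/\lambda)$ from \eqref{uppest} with a sharp pointwise inequality relating $|\sinh t|$ to the integrand that defines $E_\lambda$ on any solution $u$. The key idea is that such an inequality carries a free parameter $M$; choosing $M$ of order $\log(1/\lambda)$ then cancels the logarithmic growth of the energy exactly and yields a bound independent of $\lambda$.

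First I would establish the elementary pointwise estimate: for every $M\ge 2$ and every $t\in\R$,
\beqs
|\sinh t|\le \sinh M+\frac{1}{M-2}\bigl[t\sinh t-2(\cosh t-1)\bigr].
\eeqs
Writing $\phi(t):=t\sinh t-2(\cosh t-1)$, one checks that $\phi(0)=\phi'(0)=0$ and $\phi''(t)=t\sinh t\ge 0$, so $\phi\ge 0$ on $\R$. For $|t|\le M$ the estimate reduces to $|\sinh t|\le\sinh M$; for $|t|\ge M\ge 2$ it is equivalent to $(|t|-2)|\sinh t|\le\phi(t)$, which, after dividing by $|\sinh t|$, amounts to the trivial bound $2e^{-|t|}\le 2$. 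This is essentially the content of Lemma~3.6 of \cite{vogekav} in the form we need.

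Next, I would exploit the variational structure. Testing the equation satisfied by $u=u_{k,\lambda}$ against $u$ itself (admissible by the regularity recalled in Remark~\ref{reg}) yields the energy identity $\int_\om a|\nabla u|^2=\lambda\int_{\partial\om}a\,u\sinh u$; inserting this into the definition \eqref{funz} of the functional collapses it into
\beqs
2E_\lambda(u)=\lambda\int_{\partial\om}a\,\phi(u).
\eeqs
Integrating the pointwise estimate against $a$ over $\partial\om$ and multiplying by $\lambda$ now produces, for every $M\ge 2$,
\beqs
\lambda\int_{\partial\om}a\,|\sinh u|\le\lambda\sinh(M)\int_{\partial\om}a+\frac{2E_\lambda(u)}{M-2}.
\eeqs

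Finally I would optimize in $M$. Using $E_\lambda(u_{k,\lambda})=c_k(\lambda)\le C_*\log(1/\lambda)$ from \eqref{uppest} and choosing $M:=\log(1/\lambda)+2$ (so that $M\ge 2$ for $\lambda<e^{-2}$), the first term on the right is controlled by $\tfrac{e^2}{2}\int_{\partial\om}a$, since $\lambda\sinh M=\tfrac12(e^2-\lambda^2 e^{-2})$, while the second is bounded by $2C_*$. Both are independent of $\lambda$, so $\lambda\int_{\partial\om}a|\sinh u_{k,\lambda}|$ is bounded; the stated estimate on $\lambda\int_{\partial\om}|\sinh u_{k,\lambda}|$ then follows from $a\ge a_0>0$ by dividing through by $a_0$. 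The only genuine difficulty is recognizing the right pointwise inequality above: once it is in hand, the choice $M\sim\log(1/\lambda)$ is essentially forced, as it is the only scale that balances the two terms on the right.
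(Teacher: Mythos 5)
Your proposal is correct and follows essentially the same route the paper intends: it reconstructs the pointwise inequality $|\sinh t|\le\sinh M+\frac{1}{M-2}\bigl[t\sinh t-2(\cosh t-1)\bigr]$ (this is precisely Lemma 3.6 of \cite{vogekav}, the "elementary lemma in integration theory" the paper cites), combines it with the identity $2E_\lambda(u)=\lambda\int_{\partial\om}a\bigl[u\sinh u-2(\cosh u-1)\bigr]$ obtained by testing the equation against $u$, and then chooses $M\sim\log(1/\lambda)$ to absorb the logarithmic energy bound \eqref{uppest}, exactly as in the proof of Corollary 3.7 of \cite{vogekav} that the paper invokes with the "trivial modification" of carrying the weight $a$ and using $a\ge a_0$ at the end. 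The argument and the final constant are sound; the only cosmetic slip is calling the step for $|t|\ge M$ an "equivalence" where it is in fact only a sufficient condition, which does not affect the conclusion.
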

The proof follows by a trivial modification of the proof of Corollary $3.7$ in \cite{vogekav}.

By exploiting the previous estimates it is possible to describe the behaviour of a sequence of solutions
$u_{\lambda_n}= u_{k, \lambda_n}$, where $k\ge 1$ is fixed and $\lambda_n\to 0$. The results, that will be stated below,
can be proved by an adaptation of the arguments of section $4$ of reference \cite{vogekav}; the only non trivial
change is the use of a \emph{representation formula} for a classical solution $w$ of the Neumann problem
\begin{equation}
\label{neupb}
\Delta_a w=0,\quad \mathrm{in}\,\,\om\quad\quad \frac{\partial w}{\partial\nu}=f\quad \mathrm{on} \,\,\partial\om,
\quad \int_{\partial\om}a\,w=0
\end{equation}
(where $f$ is such that $\int_{\partial\om}a\,f=0$) which extends the usual layer potential representation for harmonic functions. We prove here this formula.
\begin{lemma}
Let $w$ be the solution to \eqref{neupb}. Then
\begin{equation}
\label{reprfor}
w(y)=\frac{1}{2\pi a(y)}\int_{\partial\om}a(\sigma)\,G_a(\sigma,y)\,f(\sigma)\,d\sigma
\end{equation}
where $G_a$ is the Green's function defined in \eqref{Greena}.\footnote{By an abuse of notation we write here $a(\sigma)$
(and similarly $G_a(\sigma,y)$, etc.) instead of $a(x)\big |_{\partial\om}$. }
\end{lemma}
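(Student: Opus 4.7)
The plan is to apply Green's second identity for the weighted operator $\Delta_a$, namely
$$\int_{\Omega}\bigl[v\,\mathrm{div}(a\nabla u)-u\,\mathrm{div}(a\nabla v)\bigr]dx=\int_{\partial\om}a\bigl(v\partial_\nu u-u\partial_\nu v\bigr)d\sigma,$$
which is the divergence theorem applied to the vector field $a(v\nabla u-u\nabla v)$. I would take $u=w$ and $v=G_a(\cdot,y)$, but since $G_a(\cdot,y)$ carries a logarithmic singularity at $y$ (cf.\ \eqref{robina}), the identity has to be applied on the truncated domain $\Omega_\eps=\Omega\setminus\overline{B_\eps(y)}$ and then $\eps\to 0$.

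On $\Omega_\eps$ both $w$ and $G_a(\cdot,y)$ are classical $\Delta_a$-harmonic, so the left-hand side vanishes and we are left with
$$0=\int_{\partial\om\setminus B_\eps(y)}a\bigl(G_a\,\partial_\nu w-w\,\partial_\nu G_a\bigr)d\sigma+\int_{\Gamma_\eps}a\bigl(G_a\,\partial_\nu w-w\,\partial_\nu G_a\bigr)d\sigma,$$
where $\Gamma_\eps=\Omega\cap\partial B_\eps(y)$ carries the outer unit normal $\nu=-(x-y)/|x-y|$ (pointing toward $y$). The next step is to let $\eps\to 0$ and control the four pieces separately.

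On the outer boundary piece, $\partial_\nu w=f$, while by \eqref{Greena} and the fact that we are away from $y$, $\partial_\nu G_a(\cdot,y)\equiv -\frac{2\pi a(y)}{\int_{\partial\om}a}$. Using the normalization $\int_{\partial\om}aw=0$, the $w\,\partial_\nu G_a$ contribution drops out in the limit; the $G_a\,\partial_\nu w$ contribution tends to $\int_{\partial\om}a(\sigma)G_a(\sigma,y)f(\sigma)d\sigma$ thanks to the local $L^1$-integrability on $\partial\om$ of the logarithmic singularity and the boundedness of $f$. On $\Gamma_\eps$ I would use the splitting $G_a(x,y)=-2\log|x-y|+H_a(x,y)$ to see that $G_a\,\partial_\nu w=O(\eps\log\eps)$, while $\partial_\nu G_a(x,y)=\tfrac{2}{\eps}+O(1)$, so that
$$\int_{\Gamma_\eps}a\,w\,\partial_\nu G_a\,d\sigma\longrightarrow 2\pi\,a(y)\,w(y),$$
since $aw$ is continuous at $y$ and the length of $\Gamma_\eps$ tends to $\pi\eps$ (half of a full circle, because $y\in\partial\om$ and $\partial\om$ is smooth there).

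Collecting the four limits gives $0=\int_{\partial\om}a(\sigma)G_a(\sigma,y)f(\sigma)d\sigma-2\pi\,a(y)\,w(y)$, which is exactly \eqref{reprfor}. The only non-routine ingredient is the evaluation of the contribution of $\Gamma_\eps$: one must check that for a smooth boundary point $y$ the limiting aperture is indeed $\pi$ and that the regular part $H_a$, as well as the tangential components of the singular part, give negligible corrections. This is standard and reduces, after flattening $\partial\om$ near $y$, to a direct half-plane computation.
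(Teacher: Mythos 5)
Your proof is correct and rests on the same underlying identity as the paper's: Green's second identity for the divergence-form operator $\Delta_a$, combined with the boundary condition for $G_a$ and the normalization $\int_{\partial\om}a\,w=0$. The only difference is presentational: the paper treats the $2\pi\delta_y$ in the boundary condition of $G_a$ formally and applies the divergence theorem directly on $\Omega$, whereas you make the same computation rigorous by excising $B_\eps(y)$ and checking that the half-circle $\Gamma_\eps$ yields exactly the $2\pi a(y)w(y)$ contribution as $\eps\to 0$.
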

\begin{proof}
We first remark that, by the boundary condition in \eqref{Greena} and by the normalization condition for $w$ in \eqref{neupb}
above, the following holds:
$$\int_{\partial\om}a(\sigma)\, w(\sigma)\,\partial_{\nu}G_a(\sigma,y)\,d\sigma=2\pi a(y) w(y)$$
Then, we can compute
$$
\int_{\partial\om}a(\sigma)\,G_a(\sigma,y)\,f(\sigma)\,d\sigma=\int_{\partial\om}a(\sigma)\,G_a(\sigma,y)\,\partial_{\nu} w(\sigma)\,d\sigma -\int_{\partial\om}a(\sigma)\, w(\sigma)\,\partial_{\nu}G_a(\sigma,y)\,d\sigma+2\pi a(y) w(y)
$$
$$
=\int_{\om}\,\mathrm{div}  \Bigl [G_a(x,y)\,a(x)\nabla_x w(x) -w(x)\,a(x)\nabla_x G_a(x,y)\Bigr ]\, dx+2\pi a(y) w(y)
$$
$$
=\int_{\om}  \Bigl [G_a(x,y)\,\mathrm{div}\big (a(x)\nabla_x w(x)\big)
-w(x)\,\mathrm{div}\big (a(x)\nabla_x G_a(x,y)\big )\Bigr ]\, dx+2\pi a(y) w(y)
$$
$$=2\pi a(y)\,w(y)$$
Hence, formula \eqref{reprfor} follows.
\end{proof}
As discussed above, one can now reproduce all the estimates proved in \cite{vogekav} (in particular, those in  Lemma $4.2$ of \cite{vogekav}) which lead to the following result
\begin{proposition}
\label{blowupvar}
Let $u_{\lambda_n}\in H^1(\om)$,  $\lambda_n\to 0^+$ be a sequence of solutions to \eqref{gen2dpb} given by theorem \ref{mainth}. Then, there exists a subsequence, also denoted by $u_{\lambda_n}$, a regular finite measure $m$ on $\partial\om$ and a finite set of points $\{x^{(i)}\}_{i=1}^N\subset\partial\om$, $N\ge 1$ such that
$$\lambda_n\big |\sinh(u_{\lambda_n}) \big |\rightarrow m$$
on $\partial\om$ in the sense of measures and the points $x^{(i)}$, $i=1,...,N$ are exactly the points at which $m$ has point masses. The same points also represent the blow up points for the sequence
$$u_{\lambda_n}^0=u_{\lambda_n}-\frac{1}{\int_{\partial\om}a}\,\int_{\partial\om}a\,u_{\lambda_n}$$
in the sense that
$$\{x^{(i)}\}_{i=1}^N=\bigl\{x\in\bar\om\,\,:\,\, \exists\,x_n\rightarrow x,\, x_n\in\bar\om,\,\,{\rm with}\,\,
|u_{\lambda_n}^0(x_n) | \rightarrow \infty \bigr\} $$
\end{proposition}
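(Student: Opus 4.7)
The plan is to adapt Section 4 of \cite{vogekav} using the Green's function representation \eqref{reprfor} just established; the weight $a$ only shows up as a bounded factor by \eqref{conda}, so the main structural change is that the single-layer kernel is $G_a$ rather than the Neumann Green function of the Laplacian.

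\textbf{Step 1 (compactness).} First I would extract a limit of the boundary currents. By the bound \eqref{currest}, the measures $\mu_n := \lambda_n|\sinh u_{\lambda_n}|$ on $\partial\om$ satisfy $\mu_n(\partial\om)\le D_*$, so weak-$\ast$ compactness in $C(\partial\om)^\ast$ yields a subsequence (not relabelled) and a nonnegative finite Radon measure $m$ with $\mu_n\rightharpoonup m$. Fixing a threshold $\eps_0>0$ to be chosen in Step~2, I would set
\begin{equation*}
\Sigma:=\{y\in\partial\om\,:\,m(\{y\})\ge\eps_0\}=:\{x^{(1)},\dots,x^{(N)}\},
\end{equation*}
which is finite because $m(\partial\om)<\infty$.

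\textbf{Step 2 ($L^\infty$ bound off $\Sigma$).} Since $u_{\lambda_n}^0$ has mean zero with respect to $a\,d\sigma$ and satisfies $\Delta_a u_{\lambda_n}^0=0$ with $\partial_\nu u_{\lambda_n}^0=\lambda_n\sinh u_{\lambda_n}$, I would apply \eqref{reprfor} to get
\begin{equation*}
u_{\lambda_n}^0(y)=\frac{1}{2\pi a(y)}\int_{\partial\om} a(\sigma)\,G_a(\sigma,y)\,\lambda_n\sinh u_{\lambda_n}(\sigma)\,d\sigma.
\end{equation*}
For $y_0\in\partial\om\setminus\Sigma$, pick $r>0$ with $m(\overline{B_{2r}(y_0)}\cap\partial\om)<\eps_0$; weak-$\ast$ convergence forces $\mu_n(B_{2r}(y_0)\cap\partial\om)<\eps_0$ for $n$ large. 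Splitting the integral at radius $2r$ and using \eqref{robina} to isolate the singular part $\log|\sigma-y|^{-2}$ from the continuous remainder $H_a$, I would run the Brezis--Merle / Moser--Trudinger argument of Lemma 4.2 in \cite{vogekav}: for $\eps_0$ small enough, this yields an $L^q(B_r(y_0)\cap\partial\om)$ bound on $e^{|u_{\lambda_n}^0|}$ for some $q>1$, and hence a uniform $L^\infty$ bound on $u_{\lambda_n}^0$ in a neighbourhood of $y_0$ in $\overline\om$.

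\textbf{Step 3 (blow-up at each $x^{(i)}$ and non-emptiness).} For the reverse inclusion, at each $x^{(i)}\in\Sigma$ I would decompose $\lambda_n\sinh u_{\lambda_n}$ into positive and negative parts: weak-$\ast$ convergence gives that at least one of the two limit measures puts mass $\ge\eps_0/2$ at $x^{(i)}$, and inserting this into \eqref{reprfor} produces a contribution of order $\log(1/|x_n-x^{(i)}|)$ along suitable $x_n\to x^{(i)}$, which dominates the bounded remainder. Thus the blow-up set of $u_{\lambda_n}^0$ coincides with $\{x^{(i)}\}_{i=1}^N$. To see $N\ge 1$, I would argue by contradiction: if $\Sigma=\emptyset$, Step 2 globalises to a uniform $L^\infty$ bound on $u_{\lambda_n}^0$, and elliptic regularity for $\Delta_a u_{\lambda_n}^0=0$ with the now bounded Neumann data would give a uniform $H^1$ bound on $u_{\lambda_n}^0$ (hence on $u_{\lambda_n}$), contradicting \eqref{lowest}.

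\textbf{Main obstacle.} The delicate point will be the $\eps$-regularity used in Step 2, namely a Brezis--Merle type exponential integrability estimate for the anisotropic single-layer potential $G_a\ast\mu_n$ at scales where $\mu_n$ has small mass. However, \eqref{conda} lets the weight $a$ be absorbed into the constants, and \eqref{robina} reduces the singular part of the kernel to the standard $\log|\cdot|^{-2}$; this is exactly why the authors single out \eqref{reprfor} as the only non-routine new ingredient, and with it in hand the proof of Lemma 4.2 of \cite{vogekav} transfers essentially verbatim.
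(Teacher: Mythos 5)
Your proposal matches the paper's approach: the paper itself proves only the representation formula \eqref{reprfor} and otherwise defers entirely to Section~4 of \cite{vogekav}, stating that ``one can now reproduce all the estimates proved in \cite{vogekav} (in particular, those in Lemma 4.2).'' You have correctly unpacked what that deferred argument is -- weak-$\ast$ compactness of the currents via \eqref{currest}, Brezis--Merle $\eps$-regularity through the single-layer representation with kernel $G_a$, and the lower energy bound \eqref{lowest} to force $N\ge 1$ -- and you have correctly identified \eqref{reprfor} as the only genuinely new ingredient needed in the anisotropic setting.
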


\section{Blowing up solutions}
\label{3}
We now show that there are solutions to problem \eqref{gen2dpb} which concentrate at isolated critical points $\xi_1$ and $\xi_2,$ say, of $a$ constrained on ${\partial\om}$ as $\lambda\to 0$. By the previous discussion, it corresponds to build up solutions to problem \eqref{nlbvp}
that concentrate positively and negatively along two geodesics of the boundary of $\mathcal D$ which are nothing but the $\mathcal G-$orbits of $\xi_1$ and $\xi_2$, respectively.

 This section is organized as follows. In subsection \ref{apprsol}, we write an approximate solution for problem \eqref{gen2dpb}.  In subsection  \ref{linprob}  we study an associate linear problem and in subsection \ref{nonlinprob} we reduce our non linear problem to a finite dimensional one. In subsection \ref{redprob} we study the reduced problem and we prove Theorem  \ref{principale}.

\subsection{The approximate solution}
\label{apprsol}

To define an approximate solution for the problem \eqref{gen2dpb}, a key ingredient is given by the solutions of the following problem
 \beq\label{apprpb}
 \left\{
 \begin{array}{lr}
\Delta v=0\qquad\mbox{in}\,\,\ \mathbb R^2_+\\\\
\displaystyle\frac{\partial v}{\partial\nu}= e^v\quad \mbox{on}\,\,\ \partial\mathbb R^2_+
\end{array}
\right.
\eeq
where $\mathbb R^2_+$ denotes the upper half plane $\left\{(x_1, x_2)\,:\, x_2>0\right\}$ and $\nu$ is the unit exterior normal to $\partial\mathbb R^2_+$.
\\ The solutions of \eqref{apprpb} are given by
 $$w_{t, \mu}(x_1, x_2)= \log \frac{2\mu}{(x_1-t)^2+(x_2+\mu)^2}$$
where $t\in\mathbb R$ and $\mu>0$ are parameters.\\\\
Let us   provide an approximation for the solution of our problem. Let

 $$
u_j^\lambda(x)=\log\frac{2\mu_j}{|x-\xi_j-\lambda\mu_j \nu_j|^2},\qquad j=1, 2\qquad \xi_j\in\partial \O,\,\, \mu_j>0.
 $$
In order to satisfy the equation $\Delta_a u=0$, we need an additional term  $H_j^\lambda$ defined as follows:
let $H_j^\lambda(x)$ to be the unique solution of
 $$
\left\{
\begin{array}{lr}
-\Delta_a H_j^\lambda=\nabla\log a\cdot \nabla u_j^\lambda,\qquad \mbox{in}\,\,\O\\\\
\displaystyle\frac{\partial H_j^\lambda}{\partial\nu}=-\displaystyle\frac{\partial u_j^\lambda}{\partial\nu}+\lambda e^{u_j^\lambda}-\lambda\frac{1}{\int_{\partial\O} a}\int_{\partial\O}a e^{u_j^\lambda}\quad \mbox{on}\,\, \partial\O\\\\
\displaystyle\int_{\partial\O}H_j^\lambda\, dx=-\displaystyle\int_{\partial\O}u_j^\lambda\, dx.
\end{array}
\right.
 $$

Now we set
 $$
U_\lambda(x)= \left[u_1^\lambda(x)+H_1^\lambda(x)\right]-\left[u_2^\lambda(x)+H_2^\lambda(x)\right]
 $$
and look for a solution of \eqref{gen2dpb} in the form
 $$
u_\lambda(x)= U_\lambda(x)+\Phi_\lambda(x)
 $$

The higher-order term   $\Phi_\lambda$ will satisfy some suitable orthogonality conditions (see \eqref{lineareprimo} below).
\\\\
Let $G_a(x, y)$ be defined as in \eqref{Greena} and $H_a(x, y)$ be the regular part of $G_a(x, y)$ defined as in \eqref{robina}. In the following we will write
simply $G(x, y)$ instead of $G_1(x,y)$ and $H(x, y)$ instead of $H_1(x,y).$

It is immediate to see that
$H_a(x, y)$ solves the following problem
 $$
\left\{
\begin{array}{lr}
(\Delta_a)_x H_a(x, y)=2\nabla\log a\cdot \displaystyle\frac{x-y}{|x-y|^2}\qquad \mbox{in}\,\ \O\\\\
\displaystyle\frac{\partial H_a}{\partial\nu_x}(x, y)=-\frac{2\pi}{\int_{\partial \O}a(x)}a(y)+\frac{2(x-y)\cdot\nu(x)}{|x-y|^2}\qquad \mbox{on}\,\, \partial\O
\end{array}
\right.
 $$

The function $H_j^\lambda$ can be estimated in terms of $H_a(x, y)$. Indeed, the following result holds.
\begin{lemma}\label{lemma:hjlambda}
For any $\alpha\in (0, 1)$
$$
H_j^\lambda(x)=H_a(x, \xi_j)-\log2\mu_j+O(\lambda^\alpha)
$$
uniformly in $\bar\O$.

\end{lemma}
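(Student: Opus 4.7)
The plan is to set $Z_j^\lambda(x) := H_j^\lambda(x) - H_a(x,\xi_j) + \log(2\mu_j)$ and prove $\|Z_j^\lambda\|_{L^\infty(\overline\Omega)} = O(\lambda^\alpha)$ by analyzing the Neumann boundary value problem that $Z_j^\lambda$ satisfies. Subtracting the equation defining $H_a(\cdot,\xi_j)$ from that defining $H_j^\lambda$, one obtains $-\Delta_a Z_j^\lambda = F_j^\lambda$ in $\Omega$, $\partial_\nu Z_j^\lambda = G_j^\lambda$ on $\partial\Omega$, together with $\int_{\partial\Omega} Z_j^\lambda\, d\sigma = O(\lambda)$ (the last computed from $\int_{\partial\Omega} H_j^\lambda = -\int_{\partial\Omega} u_j^\lambda$ combined with the elementary expansion $\int_{\partial\Omega}\log|\sigma-\xi_j-\lambda\mu_j\nu_j|^2\, d\sigma = \int_{\partial\Omega}\log|\sigma-\xi_j|^2\, d\sigma + O(\lambda)$).

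The source term reads $F_j^\lambda = -2\nabla\log a\cdot\bigl[(x-\xi_j-\lambda\mu_j\nu_j)/|x-\xi_j-\lambda\mu_j\nu_j|^2 - (x-\xi_j)/|x-\xi_j|^2\bigr]$. Since $\mathrm{dist}(\xi_j+\lambda\mu_j\nu_j,\overline\Omega)\geq c\lambda$, an elementary estimate on the difference of two Newton-type vectors gives $\|F_j^\lambda\|_{L^p(\Omega)} = O(\lambda^\alpha)$ with $p=2/(1+\alpha)\in(1,2)$, for every $\alpha\in(0,1)$. The boundary datum $G_j^\lambda$ collects $-\partial_\nu u_j^\lambda + \lambda e^{u_j^\lambda}$, the term $-2(x-\xi_j)\cdot\nu(x)/|x-\xi_j|^2$ coming from $\partial_\nu H_a$, and the constant $2\pi a(\xi_j)/\int_{\partial\Omega}a - (\lambda/\int_{\partial\Omega}a)\int_{\partial\Omega}a e^{u_j^\lambda}$. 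The crucial observation is that $-\partial_\nu u_j^\lambda$ and $\lambda e^{u_j^\lambda}$ are individually Poisson-kernel-like of size $\lambda^{-1}$ at $\xi_j$, approximating $\mp 2\pi\delta_{\xi_j}$ in measure, yet their algebraic sum simplifies to $\bigl[2(x-\xi_j)\cdot\nu(x) + 2\lambda\mu_j(1-\nu_j\cdot\nu(x))\bigr]/|x-\xi_j-\lambda\mu_j\nu_j|^2$, which is uniformly bounded on $\partial\Omega$ by virtue of $(x-\xi_j)\cdot\nu(x) = O(|x-\xi_j|^2)$ and $1-\nu_j\cdot\nu(x) = O(|x-\xi_j|^2)$ (smoothness of $\partial\Omega$). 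Splitting $\partial\Omega$ into the subsets $\{|x-\xi_j|<\lambda\}$ and $\{|x-\xi_j|>\lambda\}$ one obtains $\|G_j^\lambda\|_{L^q(\partial\Omega)}=O(\lambda^\alpha)$ for $q=1/\alpha$; the constant piece is controlled by the quantitative estimate $\lambda\int_{\partial\Omega}a e^{u_j^\lambda} = 2\pi a(\xi_j) + O(\lambda\log(1/\lambda))$, itself a Lipschitz-level version of the weak convergence $\lambda e^{u_j^\lambda}\, d\sigma\rightharpoonup 2\pi\delta_{\xi_j}$.

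To translate these bounds into an $L^\infty$ estimate on $Z_j^\lambda$, decompose $Z_j^\lambda = Z^1+Z^2$, with $Z^1$ solving the Neumann problem in $\Omega$ with source $F_j^\lambda$ and zero boundary datum, and $Z^2$ solving the $\Delta_a$-harmonic Neumann problem with datum $G_j^\lambda$. For $Z^1$ one uses the standard interior Neumann Green's function (logarithmic singularity, hence in every $L^{p'}$) and H\"older's inequality to obtain $\|Z^1\|_{L^\infty} = O(\lambda^\alpha)$; for $Z^2$ one applies the representation formula of the preceding lemma, together with the integrability of $G_a(\cdot,y)$ in every $L^{q'}(\partial\Omega)$ uniformly in $y\in\overline\Omega$, to get $\|Z^2\|_{L^\infty} = O(\lambda^\alpha)$. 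The main technical challenge is the boundary estimate on $G_j^\lambda$: the two Poisson-kernel-type summands are each of order $\lambda^{-1}$ near $\xi_j$, and only after exploiting the geometric cancellation afforded by the smoothness of $\partial\Omega$ does one see that the small remainder is of order $\lambda^\alpha$.
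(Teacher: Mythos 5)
Your proof follows the same overall strategy as the paper: you define $z_\lambda^{\xi_j}:=H_j^\lambda-H_a(\cdot,\xi_j)+\log 2\mu_j$ (your $Z_j^\lambda$), write down the Neumann problem it satisfies, estimate the interior source and the boundary datum in appropriate $L^p$ norms, and conclude by a regularity argument. The difference is in the last step: the paper invokes an $L^p$ boundary-regularity estimate $\|z_\lambda^{\xi_j}\|_{W^{1+s,p}}\lesssim\|\partial_\nu z_\lambda^{\xi_j}\|_{L^p(\partial\Omega)}+\|\Delta_a z_\lambda^{\xi_j}\|_{L^p(\Omega)}$ followed by Morrey embedding into $C^\gamma(\bar\Omega)$, whereas you split $Z_j^\lambda=Z^1+Z^2$ and bound each piece by potential theory (Newtonian/layer potentials combined with H\"older). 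Both routes are standard; yours has the advantage of being more explicit at two points. First, you let the interior exponent $p=2/(1+\alpha)$ and the boundary exponent $q=1/\alpha$ be chosen independently, which correctly matches the scaling of $\|F_j^\lambda\|_{L^p}$ (the source involves the \emph{gradient} of the log-difference, so the exponent is $\lambda^{2/p-1}$, not $\lambda^{1/p}$; the paper's display tracks the $L^p$ norm of the log-difference itself, which is not what enters the regularity estimate, so your version is the more careful one). Second, you make the geometric cancellation on $\partial\Omega$ fully explicit---combining $-\partial_\nu u_j^\lambda+\lambda e^{u_j^\lambda}$ into a bounded quantity via $(x-\xi_j)\cdot\nu(x)=O(|x-\xi_j|^2)$ and $1-\nu_j\cdot\nu(x)=O(|x-\xi_j|^2)$, then subtracting the $\partial_\nu H_a$ piece to find a remainder $\sim\lambda/|x-\xi_j|$---whereas the paper compresses this to ``$O(\lambda\mu_j)$'' together with a reference to D\'avila--del Pino--Musso. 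Two small points to tidy up if you want a complete write-up: $\int_{\partial\Omega}Z_j^\lambda$ is actually $O(\lambda\log(1/\lambda))$ rather than $O(\lambda)$, though this does not affect the conclusion; and the splitting $Z_j^\lambda=Z^1+Z^2$ requires adding constants to make each Neumann subproblem compatible (the source $F_j^\lambda$ need not satisfy $\int_\Omega aF_j^\lambda=0$), again a harmless adjustment.
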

\begin{proof}
The boundary condition satisfied by $H_j^\lambda$ is
\begin{eqnarray*}
\frac{\partial H_j^\lambda}{\partial\nu}&=&-\frac{\partial u_j^\lambda}{\partial\nu}+\lambda e^{u_j^\lambda}-\frac{\lambda}{\int_{\partial\O}a}\int_{\partial\O} ae^{u_j^\lambda}\\
&=& 2\lambda\mu_j\frac{1-\nu(\xi_j)\cdot\nu(x)}{|x-\xi_j-\lambda\mu_j\nu(\xi_j)|^2}+2\frac{(x-\xi_j)\cdot \nu(x)}{|x-\xi_j-\lambda\mu_j\nu(\xi_j)|^2}-\frac{\lambda}{\int_{\partial\O} a}\int_{\partial \O}ae^{u_j^\lambda}.
\end{eqnarray*}
As $\lambda\to 0$ we get:
\begin{eqnarray*}
\lambda\int_{\partial\O}a e^{u_j^\lambda}&=&\lambda\int_{\partial\O}a(x)\frac{2\mu_j}{|x-\xi_j-\lambda\mu_j\nu(\xi_j)|^2}= 2\int_{\frac{\partial\O-\xi_j}{\lambda\mu_j}}\frac{a(\xi_j+\lambda\mu_j y)}{|y-\nu(0)|^2}\\
&=& 2a(\xi_j) \int_{\frac{\partial\O-\xi_j}{\lambda\mu_j}}\frac{1}{|y-\nu(0)|^2}+O\left(\lambda |\nabla a(\xi_j)|\mu_j\int_{\frac{\partial\O-\xi_j}{\lambda\mu_j}}\frac{1}{|y-\nu(0)|^2}\right)\\
&=& 2a(\xi_j)\left(\int_{-\infty}^{+\infty}\frac{1}{1+t^2}\, dt -O\left(\int_{\lambda^{-1}\mu_j^{-1}}\frac{1}{1+t^2}\, dt\right)\right)\\
&=& a(\xi_j)\left(2\pi +O\left(\arctan(\lambda\mu_j)^{-1}-\frac \pi 2\right)\right)+O(\lambda\mu_j)\\
&=& 2\pi a(\xi_j)+O(\arctan(\lambda\mu_j))+O(\lambda\mu_j)\\
&=& 2\pi a(\xi_j)+O(\lambda\mu_j).
\end{eqnarray*}
Let us consider the difference
 $$
z_\lambda^{\xi_j} (x) = H_j^\lambda(x)-H_a(x, \xi_j) +\log 2\mu_j.
 $$
Hence $z_\lambda$ solves the following problem
 $$
\left\{
\begin{array}{lr}
-\Delta_a z_\lambda^{\xi_j} =-\Delta_a H_j^\lambda+\Delta_a H_a(x, \xi_j)\\\\
\displaystyle\frac{\partial z_\lambda^{\xi_j}}{\partial\nu}=\displaystyle\frac{\partial H_j^\lambda}{\partial\nu}-\displaystyle\frac{\partial H_a}{\partial\nu}
\end{array}
\right.
 $$
namely
 $$
\left\{
\begin{array}{lr}
-\Delta_a z_\lambda^{\xi_j} =\nabla\log a(x)\nabla\left[\log\frac{2\mu_j}{|x-\xi_j-\lambda\mu_j\nu_j|^2}-\log\frac{1}{|x-\xi_j|^2}\right]\\\\
\displaystyle\frac{\partial z_\lambda^{\xi_j}}{\partial\nu}=O\left(\lambda\mu_j\right)
\end{array}
\right.
 $$
As done in   Lemma 3.1 of \cite{davila2}  it follows that for any $p>1$ $$\|\frac{\partial z_\lambda^{\xi_j}}{\partial\nu}\|_{L^p(\partial\O)}\leq c\lambda^{\frac 1 p}.$$ Moreover, again as in Lemma 3.1 of \cite{davila2}, we get
$$\left\|\log\frac{1}{|x-\xi_j|^2}-\log\frac{1}{|x-\xi_j-\lambda\mu_j\nu(\xi_j)|^2}\right\|_{L^p(\O)}^p=\int_{B_{\lambda\mu_j}(\xi_j)\cap\O}\ldots +\int_{\O\setminus B_{\lambda\mu_j}(\xi_j)}\ldots =I_1+I_2$$
Now $$|I_1|\leq C\lambda^2\left(\log\frac 1\lambda\right)^p$$ while for $p\in (1, 2)$ $$|I_2|\leq C\lambda^p.$$ In conclusion, for any $p\in (1, 2)$ $$\left\|\log\frac{1}{|x-\xi_j|^2}-\log\frac{1}{|x-\xi_j-\lambda\mu_j\nu(\xi_j)|^2}\right\|_{L^p(\O)}\leq c\lambda.$$ By $L^p-$ theory $$\|z_\lambda^{\xi_j}\|_{W^{1+s, p}(\O)}\leq C\left(\left\|\frac{\partial z_\lambda^{\xi_j}}{\partial\nu}\right\|_{L^p(\partial\O)}+\|\Delta_a z_\lambda^{\xi_j}\|_{L^p(\O)}\right)\leq C\lambda^{\frac 1p}$$ for any $0<s<\frac 1p$. By   Morrey's embedding we obtain $$\|z_\lambda^{\xi_j}\|_{C^\gamma(\bar\O)}\leq c\lambda^{\frac 1p}$$ for any $0<\gamma<\frac 12+\frac 1p$. This proves the result with $\alpha=\frac 1p$.
\end{proof}
Moreover the function $H_a(x, y)$ can be expanded in terms of $H(x, y)$. The following expansion is proved in  Lemma 2.1 of \cite{wyz}.
\begin{lemma}\label{greena}
Let $H_{a, y}(x)=H_a(x, y)$ for any $y\in\O$. Then $y\to H_{a, y}$ is a continuous map from $\O$ into $C^{0, \gamma}(\bar\O)$, for any $\gamma\in(0,1)$. It follows
$$
H_a(x, y)=H(x, y)+\nabla \log a(y)\cdot \nabla(|x-y|^2\log|x-y|)+\mathcal{H}(x, y)
$$
where $y\to \mathcal{H}(\cdot, y)$ is a continuous map from $\O$ into $C^{1, \gamma}(\bar\O)$ for all $\gamma\in(0,1)$. Furthermore, the function $(x, y) \to \mathcal{H}(x, y)\in C^1(\O\times\O)$, in particular $x\to H_a(x, x)\in C^1(\O)$.
\end{lemma}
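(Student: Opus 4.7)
The plan is to compare the anisotropic Green's function $G_a$ with the classical Neumann Green's function $G$ and extract the extra singular contribution due to the drift term $\nabla\log a\cdot\nabla$ in $\Delta_a$. Setting $R(x,y):=H_a(x,y)-H(x,y)$ and noting that, since both $H_a$ and $H$ share the same singular part $\log\frac{1}{|x-y|^2}$ at the diagonal, we have $R=G_a-G$, one subtracts the two boundary value problems defining $G_a$ and $G$ (using $\Delta_a=\Delta+\nabla\log a\cdot\nabla$ and $(\Delta_a)_x G_a=0=\Delta_x G$ away from the singularity) to get
\beqs
-\Delta_a R(x,y)=\nabla\log a(x)\cdot\nabla_x G(x,y)\qquad\text{in }\Omega,
\eeqs
together with an $x$-regular Neumann boundary condition obtained by comparing the boundary data in \eqref{Greena}.

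The key step is to identify and remove the leading singular part of this right-hand side. Using the splitting $\nabla_x G(x,y)=-2(x-y)/|x-y|^2+\nabla_x H(x,y)$ and Taylor-expanding $\nabla\log a(x)$ around $y$ (possible since $a\in C^1$), the dominant singularity at $y$ is a constant multiple of $\nabla\log a(y)\cdot(x-y)/|x-y|^2$. The explicit ansatz $\Psi(x,y):=c\,\nabla\log a(y)\cdot\nabla_x(|x-y|^2\log|x-y|)$ is designed to cancel this: a direct computation in polar coordinates gives $\Delta_x(|x-y|^2\log|x-y|)=4\log|x-y|+4$, hence $\Delta_x\Psi=4\,\nabla\log a(y)\cdot(x-y)/|x-y|^2$, and the constant $c$ can be chosen so that $-\Delta_a\Psi$ matches the singular part of $-\Delta_a R$ exactly.

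Define $\mathcal{H}(x,y):=R(x,y)-\Psi(x,y)$. By construction, $\mathcal{H}(\cdot,y)$ solves a linear problem for $\Delta_a$ whose source is in $L^p(\Omega)$ for some $p>1$ (the most delicate residual is $(\nabla\log a(x)-\nabla\log a(y))\cdot(x-y)/|x-y|^2$, controlled by the modulus of continuity of $\nabla\log a$), with Neumann data that is regular in $x$. Standard $W^{2,p}$ elliptic theory for the Neumann problem combined with Morrey's embedding then yields $\mathcal{H}(\cdot,y)\in C^{1,\gamma}(\bar\Omega)$. Continuity of $y\mapsto\mathcal{H}(\cdot,y)$ into $C^{1,\gamma}(\bar\Omega)$, and therefore of $y\mapsto H_{a,y}$ into $C^{0,\gamma}(\bar\Omega)$ (since $\Psi$ is manifestly continuous in $y$ with values in $C^{0,\gamma}$), follows from continuous dependence of the data on $y$; joint $C^1$-regularity of $\mathcal{H}$ on $\Omega\times\Omega$, and consequently of $x\mapsto H_a(x,x)$ (the diagonal value of $|x-y|^2\log|x-y|$ vanishes), is then obtained by differentiating the linear problem in $y$ and running the same argument.

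The main technical obstacle is the delicate bookkeeping in the second step: certifying that after subtracting the explicit correction $\Psi$ the residual source lies in a high enough $L^p$ space to push the $W^{2,p}$ theory up to the full $C^{1,\gamma}$ regularity claimed. This ultimately rests on combining the continuity of $\nabla\log a$ (to bound the Taylor error in the extraction of the singular part) with the $L^p$-integrability of $|x-y|^{-1}$ near the diagonal, together with uniformity of all these estimates as $y$ varies in a compact subset of $\Omega$.
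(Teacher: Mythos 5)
The paper does not prove this lemma: it is quoted verbatim from Lemma 2.1 of \cite{wyz}, so your proposal is a genuine reconstruction. The overall strategy---set $R=H_a-H=G_a-G$, derive $-\Delta_a R=\nabla\log a\cdot\nabla_x G$ by subtracting the two defining boundary value problems, peel off the leading singularity with a compensator built from $|x-y|^2\log|x-y|$, and close with $W^{2,p}$ regularity and Morrey's embedding---is the right one, and the key identity $\Delta_x\big(|x-y|^2\log|x-y|\big)=4\log|x-y|+4$ is correct.

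Two gaps need closing. First, you leave the constant $c$ in $\Psi=c\,\nabla\log a(y)\cdot\nabla_x(|x-y|^2\log|x-y|)$ undetermined, but its value is part of the claim. Carrying out the cancellation in the normalization used in this paper, where $G_a(x,y)=\log\frac{1}{|x-y|^2}+H_a(x,y)$ and hence $(\Delta_a)_x H_a=2\,\nabla\log a(x)\cdot\frac{x-y}{|x-y|^2}$, while $\Delta_x\Psi=4c\,\nabla\log a(y)\cdot\frac{x-y}{|x-y|^2}$, the singular parts of $(\Delta_a)_x(H_a-H-\Psi)$ cancel only when $4c=2$, i.e.\ $c=\frac{1}{2}$, whereas the lemma displays the coefficient $1$. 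You should resolve this: it is most plausibly a normalization mismatch with \cite{wyz}, whose interior Dirichlet Green's function carries a different leading constant, but as written your argument establishes a formula that differs from the one stated by a factor of $2$, and that factor propagates into $H_a(\xi,\xi)$ and hence into the explicit $\mu_j$ of Theorem~\ref{principale}.

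Second, the regularity step is thinner than you make it appear. After subtracting the compensator, the worst residual in $-\Delta_a\mathcal{H}(\cdot,y)$ is $2\big(\nabla\log a(x)-\nabla\log a(y)\big)\cdot\frac{x-y}{|x-y|^2}$, of size $\omega(|x-y|)/|x-y|$ with $\omega$ the modulus of continuity of $\nabla\log a$. To reach $C^{1,\gamma}(\bar\Omega)$ via $W^{2,p}\hookrightarrow C^{1,\gamma}$ in two dimensions you need $p>2$, and $\omega(r)/r\in L^p$ near the origin for some $p>2$ forces $\omega$ to decay at a genuine H\"older (or at least Dini) rate; mere continuity of $\nabla a$, i.e.\ $a\in C^1(\bar\Omega)$ as assumed in this paper, does not suffice. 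Your argument (and \cite{wyz}, which assumes more smoothness of $a$) implicitly uses $a\in C^{1,\alpha}$ for some $\alpha>0$, and you should state this explicitly rather than sweep it into the phrase ``controlled by the modulus of continuity.''
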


\smallskip
We consider now the following change of variables $$x=\lambda y,\qquad y\in\O_\lambda\equiv\frac{\O}{\lambda},\qquad v(y)= u(\lambda y).$$ Then $u$ is a solution to problem \eqref{gen2dpb} if and only if $v$ solves the problem
\beq\label{problemarisc}
\left\{
\begin{array}{lr}
\Delta_{a_\lambda} v=0\qquad\qquad \mbox{in}\,\, \O_\lambda\\\\
\displaystyle\frac{\partial v}{\partial \nu}=2\lambda^2 \sinh v \qquad \mbox{on}\,\, \partial\O_\lambda,
\end{array}
\right.
\eeq
where $a_\lambda(y):=a(\lambda y).$
In the expanded domain $\O_\lambda$, $U_{\lambda}(x)$ becomes
$$
V(y)=\sum_{j=1}^2(-1)^{j-1}\left[\underbrace{\log\frac{2\mu_j}{|y-\xi'_j-\mu_j\nu_j'|^2}-2\log\lambda}_{=u_j^\lambda(\lambda y)}+H_j^\lambda(\lambda y)\right]
$$
where $\xi_j'=\lambda^{-1}\xi_j$ and $\nu_j'=\nu(\xi_j')$. Therefore,
  \beq\label{vscaled}v(y)=V(y)+\phi (y),\qquad y\in\O_\lambda.\eeq
  will be a solution of \eqref{problemarisc} provided $\phi$ solves
$$
\left\{
\begin{array}{lr}
\Delta_{  a_\lambda} \phi=0\qquad\mbox{in}\,\, \O_\lambda\\\\
\displaystyle\frac{\partial\phi}{\partial\nu}- \mathcal W \phi= \mathcal R+ \mathcal N(\phi)\qquad \mbox{on}\,\, \partial\O_\lambda
\end{array}
\right.
$$
where we set
\beq\label{W}
\mathcal W(y): = 2\lambda^2\cosh V(y)
\eeq\beq\label{errore}
\mathcal R(y) :=-\left[\frac{\partial V}{\partial \nu}-2\lambda^2\sinh V\right](y)
\eeq
and
\beq\label{N}
\mathcal N(\phi)=2\lambda^2\left[\sinh (V+\phi)-\sinh V -(\cosh V)\phi\right].
\eeq

First of all, we prove that $V$ is a good approximation for a solution to  \eqref{problemarisc} provided the parameters $\mu_1$ and $\mu_2$ are suitably  choosen.

\begin{lemma}\label{errore-lem}
Assume
\beq\label{sceltamuj}
\log2\mu_1= H_a(\xi_1, \xi_1)-G_a(\xi_1, \xi_2)\ \hbox{and}\ \log2\mu_2= H_a(\xi_2, \xi_2)-G_a(\xi_2, \xi_1).
\eeq
 Then, for any $\alpha\in(0,1)$, there exists a positive constant $C$ independent of $\lambda$ such that, for any $y\in \O_\lambda$,
\beq\label{R}
|\mathcal R(y)|\leq \lambda^\alpha \sum_{j=1}^2\frac{1}{1+|y-\xi_j'|},\qquad \forall\,y\in \O_\lambda,
\eeq
and
\beq\label{W-esti}
\mathcal W(y)=\sum_{j=1}^2\frac{2\mu_j}{|y-\xi'_j-\mu_j\nu_j'|^2}(1+\theta_\lambda(y)),
\quad \hbox{with}\quad
|\theta_\lambda(y)|\leq C \lambda^\alpha + C\lambda \sum_{j=1}^2|y-\xi_j'|.
\eeq
\end{lemma}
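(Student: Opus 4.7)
\emph{Plan.} The proof is a direct expansion of the two quantities $\mathcal R$ and $\mathcal W$ defined in \eqref{errore} and \eqref{W}, relying on three ingredients: (i) the defining boundary condition of $H_j^\lambda$; (ii) the rescaling identity $\lambda^2 e^{u_j^\lambda(\lambda y)} = e^{\tilde w_j(y)}$ with $\tilde w_j(y)=\log\frac{2\mu_j}{|y-\xi_j'-\mu_j\nu_j'|^2}$; and (iii) the expansion of $H_j^\lambda$ given by Lemma \ref{lemma:hjlambda} combined with the decomposition of $H_a$ in Lemma \ref{greena}. The special choice \eqref{sceltamuj} is forced precisely by the algebraic matching in step 2 below.

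\textbf{Step 1 (normal derivative of $V$ on $\partial\O_\lambda$).} The chain rule gives $\partial_{\nu_y}[u_j^\lambda(\lambda y)+H_j^\lambda(\lambda y)] = \lambda\,\partial_{\nu_x}(u_j^\lambda+H_j^\lambda)(\lambda y)$. Substituting the boundary condition satisfied by $H_j^\lambda$ and using the rescaling identity (ii), one gets on $\partial\O_\lambda$ the identity $\partial_\nu V(y) = \sum_{j=1}^2 (-1)^{j-1}\bigl[e^{\tilde w_j(y)}-\lambda\gamma_j(\lambda)\bigr]$, where $\gamma_j(\lambda)=\lambda(\int_{\partial\O}a)^{-1}\!\int_{\partial\O} a e^{u_j^\lambda}$ is bounded since $\lambda\int a e^{u_j^\lambda}\to 2\pi a(\xi_j)$ (as computed in the proof of Lemma \ref{lemma:hjlambda}).

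\textbf{Step 2 (expansion of $\lambda^2 e^{\pm V}$).} Writing $V_j(y):=u_j^\lambda(\lambda y)+H_j^\lambda(\lambda y)$, one factors $\lambda^2 e^V = e^{\tilde w_1(y)}\cdot e^{H_1^\lambda(\lambda y) - V_2(y)}$ and similarly $\lambda^2 e^{-V}=e^{\tilde w_2(y)}\cdot e^{H_2^\lambda(\lambda y)-V_1(y)}$. Near $\xi_1'$ we expand the correction exponent: by Lemma \ref{lemma:hjlambda} one has $H_j^\lambda(\lambda y)=H_a(\lambda y,\xi_j)-\log 2\mu_j + O(\lambda^\alpha)$, while the logarithmic singularity of $u_2^\lambda(\lambda y)$ contributes $-\log|\lambda y-\xi_2-\lambda\mu_2\nu_2|^2 + \log 2\mu_2$. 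Using $H_a(\lambda y,\xi_2)=G_a(\lambda y,\xi_2)+\log|\lambda y-\xi_2|^2$ and the $C^1$ regularity of $H_a(\cdot,\xi_j)$ at $\xi_j$ (Lemma \ref{greena}) to Taylor expand to first order with remainder $O(\lambda|y-\xi_1'|)$, the bulk of the terms collapse and the exponent becomes $H_a(\xi_1,\xi_1)-G_a(\xi_1,\xi_2)-\log 2\mu_1 + O(\lambda^\alpha + \lambda|y-\xi_1'|)$. The choice \eqref{sceltamuj} is exactly what is required for this to equal $O(\lambda^\alpha+\lambda|y-\xi_1'|)$, so $\lambda^2 e^V = e^{\tilde w_1(y)}(1+\theta_\lambda^{(1)}(y))$ with the error bound claimed in \eqref{W-esti}. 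The symmetric manipulation near $\xi_2'$ produces $\lambda^2 e^{-V} = e^{\tilde w_2(y)}(1+\theta_\lambda^{(2)}(y))$, fixing $\mu_2$ as in \eqref{sceltamuj}.

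\textbf{Step 3 (assembly).} Adding the two expansions of Step 2 gives $\mathcal W = 2\lambda^2\cosh V = e^{\tilde w_1}+e^{\tilde w_2}+\text{errors}$, which is \eqref{W-esti}. For $\mathcal R = -\partial_\nu V + \lambda^2 e^V - \lambda^2 e^{-V}$, the leading $\pm e^{\tilde w_j}$ coming from Steps 1 and 2 cancel, leaving $\mathcal R(y) = -\sum (-1)^{j-1} e^{\tilde w_j(y)}\theta_\lambda^{(j)}(y) + O(\lambda)$. Using $e^{\tilde w_j(y)}\le C(1+|y-\xi_j'|)^{-2}$ and $(\lambda^\alpha+\lambda|y-\xi_j'|)(1+|y-\xi_j'|)^{-2}\le C\lambda^\alpha(1+|y-\xi_j'|)^{-1}$ valid on $\O_\lambda$ (where $|y-\xi_j'|\le C/\lambda$), the bound \eqref{R} follows. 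The main obstacle is the algebraic bookkeeping in Step 2: one must combine the logarithmic singularity of $u_{3-j}^\lambda$ at $\xi_j$ with the regular parts of $H_1^\lambda$ and $H_2^\lambda$ to produce exactly $H_a(\xi_j,\xi_j)-G_a(\xi_j,\xi_{3-j})$ (and not some other mix of $H_a$'s), and this grouping is what singles out the precise normalization \eqref{sceltamuj}.
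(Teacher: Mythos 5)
Your proposal follows essentially the same route as the paper: the heart of the paper's proof is precisely your Step 2 computation, namely expanding $H_1^\lambda(\lambda y)-\bigl(u_2^\lambda(\lambda y)+H_2^\lambda(\lambda y)\bigr)$ via Lemma \ref{lemma:hjlambda} and the relation $G_a=H_a+\log|x-y|^{-2}$ to obtain $H_a(\xi_1,\xi_1)-\log 2\mu_1-G_a(\xi_1,\xi_2)+O(\lambda^\alpha)+O(\lambda|y-\xi_1'|)$, which vanishes at leading order exactly by the choice \eqref{sceltamuj}. Your Steps 1 and 3 correctly supply the assembly that the paper delegates to Lemma 3 of \cite{davila}, so the argument is correct and not genuinely different.
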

\begin{proof}
By Lemma \ref{lemma:hjlambda} we deduce that if
  $|y-\xi_j'|\leq \frac{\delta}{\lambda}$
\begin{eqnarray*}
&& H_1^\lambda(\lambda y)-\left(\log\frac{2\mu_2}{\lambda^2|y-\xi_2'-\mu_2\nu(\xi_2')|^2}+H_2^\lambda(\lambda y)\right)\quad \hbox{(setting $z:=y-\xi'_1$)}\\
&&=(H_a(\lambda z+\xi_1,\xi_1)-\log2\mu_1)-\\
&&-\left(\log2\mu_2+\log\frac{1}{|\lambda z+(\xi_1-\xi_2)-\lambda\mu_2\nu(\xi_2')|^2}+H_a(\lambda z+\xi_1,\xi_2)-\log2\mu_2\right)+O(\lambda^\alpha)\\
&&= \underbrace{H_a(\xi_1,\xi_1)-\log2\mu_1 -G_a(\xi_1,\xi_2)}_{=0\ \hbox{because of}\ \eqref{sceltamuj}}+O(\lambda^\alpha)+O(\lambda|z|)\\
&&=O(\lambda^\alpha)+O(\lambda|y-\xi'_1|)
\end{eqnarray*}
and in a similar way
\begin{eqnarray*}
&&H_2^\lambda(\lambda y)-\left(\log\frac{2\mu_1}{\lambda^2|y-\xi_1'-\mu_1\nu(\xi_1')|^2}+H_1^\lambda(\lambda y)\right)=O(\lambda^\alpha)+O(\lambda|y-\xi'_2|).
\end{eqnarray*}
Therefore, the proof follows exactly as in  Lemma 3 of \cite{davila}.

\end{proof}

\subsection{A linear  problem}
\label{linprob}

The key ingredient in this section is
  the linearization of problem \eqref{apprpb} around the solution $w_{0, \mu}$, namely the problem

\beq\label{lineare}
\left\{
\begin{array}{lr}
\Delta\phi=0\qquad \qquad\mbox{in}\,\,\ \mathbb R^2_+\\\\
\displaystyle\frac{\partial\phi}{\partial\nu}=\displaystyle\frac{2\mu}{x_1^2+\mu^2}\phi\qquad \mbox{on}\,\,\partial\mathbb R^2_+.
\end{array}
\right.
\eeq

In \cite{davila} it has been proved the following result.
\begin{lemma}\label{boundedlin}
Any bounded solution of \eqref{lineare} is a linear combination of the functions
$$z_0(x)=1-2\mu\frac{x_2+\mu}{x_1^2+(x_2+\mu)^2}$$ and $$z_1(x)=-2\frac{x_1}{x_1^2+(x_2+\mu)^2}.$$
\end{lemma}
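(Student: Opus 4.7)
The plan is to conformally reduce \eqref{lineare} to the classical Steklov eigenvalue problem on the unit disk and then use Fourier analysis. The two explicit bounded solutions $z_0$ and $z_1$ are easy to recognise: differentiating the two-parameter family $w_{t,\mu}(x_1,x_2)=\log\frac{2\mu}{(x_1-t)^2+(x_2+\mu)^2}$ of solutions of \eqref{apprpb} with respect to its symmetries gives $z_0=\mu\,\partial_\mu w_{0,\mu}$ and $z_1=-\partial_t w_{t,\mu}|_{t=0}$, both manifestly bounded on $\R^2_+$. The content of the lemma is that any bounded solution is a linear combination of these two.

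To prove this, I would use the M\"obius map $\Phi(z):=\frac{z-i\mu}{z+i\mu}$, which sends $\R^2_+$ (identified with the complex upper half-plane) conformally onto the unit disk $D$. A bounded harmonic $\phi$ on $\R^2_+$ pulls back to a bounded harmonic $\tilde\phi:=\phi\circ\Phi^{-1}$ on $D$, and by the conformal scaling of normal derivatives,
\[
\partial_{\nu_{\R^2_+}}\phi(x_1,0)=|\Phi'(x_1)|\,\partial_{\nu_D}\tilde\phi(\Phi(x_1)).
\]
A direct computation gives $\Phi'(z)=2i\mu/(z+i\mu)^2$, whence $|\Phi'(x_1)|=\frac{2\mu}{x_1^2+\mu^2}$, which is precisely the weight in the boundary condition of \eqref{lineare}. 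Dividing out this common factor, the problem on $D$ reduces to the pure Steklov eigenvalue problem $\Delta\tilde\phi=0$ in $D$ with $\partial_r\tilde\phi=\tilde\phi$ on $\partial D$.

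Writing the bounded harmonic $\tilde\phi$ on $D$ as $\tilde\phi(r,\theta)=\sum_{n\in\Z}a_n r^{|n|}e^{in\theta}$, the Steklov condition at $r=1$ becomes $(|n|-1)a_n=0$ for every $n$, so $a_n=0$ unless $|n|=1$. Hence the space of bounded solutions on $D$ is two-dimensional, spanned by $r\cos\theta$ and $r\sin\theta$. Pulling back through $\Phi^{-1}$ gives a two-dimensional space of bounded solutions of \eqref{lineare}, which must therefore coincide with the span of the linearly independent pair $\{z_0,z_1\}$ produced in the first step.

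The one non-algebraic point is the boundary regularity needed to legitimately equate the two Fourier series at $r=1$. Once the problem is recast as the Steklov problem with \emph{constant} weight on the smooth domain $D$, standard Schauder theory yields $\tilde\phi\in C^1(\overline D)$ (in fact smoother), and the identification of Fourier coefficients goes through. This is the step I expect to require the most care; everything else is a direct verification of the conformal identity $|\Phi'(x_1)|=\frac{2\mu}{x_1^2+\mu^2}$ and bookkeeping.
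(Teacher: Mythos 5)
Your overall strategy is exactly the standard one: the paper itself offers no proof of this lemma (it only cites \cite{davila}), and the argument in that reference (and in \cite{davila2}) is precisely the conformal reduction to the Steklov problem on the disk followed by Fourier analysis. Your identification of $z_0=\mu\,\partial_\mu w_{0,\mu}$ and $z_1=-\partial_t w_{t,\mu}|_{t=0}$ is correct, the computation $|\Phi'(x_1)|=2\mu/(x_1^2+\mu^2)$ is correct, and in fact one can check directly that $z_0=\mathrm{Re}\,\Phi$ and $\mu z_1=\mathrm{Im}\,\Phi$, so your two explicit solutions are exactly the pullbacks of the modes $r\cos\theta$ and $r\sin\theta$; the dimension count at the end is therefore sound once the disk problem is fully justified.

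There is, however, one genuine gap, and it is not where you located it. The M\"obius map $\Phi$ sends $\partial\mathbb R^2_+$ onto $\partial D\setminus\{1\}$, the missing point $1=\Phi(\infty)$ being the image of infinity. Consequently the reduction only yields $\partial_r\tilde\phi=\tilde\phi$ on $\partial D\setminus\{1\}$: no boundary condition is inherited at $1$, because \eqref{lineare} imposes nothing "at infinity" beyond boundedness. Your appeal to "standard Schauder theory" to get $\tilde\phi\in C^1(\overline D)$ therefore begs the question precisely at that point — Schauder estimates for the Robin problem apply only where the Robin condition is already known to hold, i.e.\ away from $1$. What is actually needed is a removable-singularity argument: one must show that a bounded harmonic function on $D$, smooth up to $\partial D\setminus\{1\}$ and satisfying $\partial_r\tilde\phi=\tilde\phi$ there, satisfies the same condition across the point $1$ (equivalently, that $\phi$ and $\nabla\phi$ are sufficiently controlled as $|x|\to\infty$ in the half-plane — e.g.\ uniform $C^{1,\alpha}$ bounds up to $\partial\mathbb R^2_+$ from the Robin coefficient $2\mu/(x_1^2+\mu^2)$, which decays, plus the Poisson representation of bounded harmonic functions). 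This removability is the real analytic content of the lemma and is where the cited proof does its work; without it, the passage from the Fourier series on $\{r<1\}$ to the coefficient identities $(|n|-1)a_n=0$ is not justified. Once that step is supplied, the rest of your argument is complete.
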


Now, let us assume that the points $\xi_1,\xi_2\in\partial\O$ are uniformly separated, namely  $|\xi_1-\xi_2|\geq d$  for some $d>0$ which does not depend on $\lambda.$
We have to redefine $z_0$ and $z_1$ in a neighbourhood of $\xi_1$ and $\xi_2$ in a suitable way. So, let $F_j: B_\rho(\xi_j)\to N_0$ a diffeomorphism, where $\rho>0$ is fixed and $N_0$ is an open neighborhood of $0\in\R^2$ such that $$F_j(\O\cap B_\rho(\xi_j))=\mathbb R^2_+\cap  N_0,\qquad F_j(\partial\O\cap B_\rho(\xi_j))=\partial\mathbb R^2_+\cap N_0$$ and such that $F_j$ preserves area.


For $y\in\O_\lambda\cap B_{\rho/\lambda}(\xi'_j)$ we define
$$
F^\lambda_j(y)=\frac{1}{\lambda}F_j(\lambda y)
\quad\hbox{and}\quad
Z_{ij}(y)=z_{ij}(F_j^\lambda(y)),\  j=1, 2,\  i=0, 1$$ where $z_{ij}$ denotes the function $z_i$ with parameter $\mu_j$, namely:
$$z_{0j}=1-2\mu_j\frac{x_2+\mu_j}{x_1^2+(x_2+\mu_j)^2},\qquad z_{1j}=-2\frac{x_1}{x_1^2+(x_2+\mu_j)^2}.$$
Let $\tilde\chi:\mathbb R\to\mathbb R$ be a non-negative smooth function with $\tilde\chi(r)=1$ for $r\leq R_0$ and $\tilde\chi(r)=0$ for $r\geq R_0+1$, $0\leq \tilde\chi\leq 1$ (with $R_0$ a large positive constant).
Then, we set
$$\chi_j(y):=\tilde\chi(|F_j^\lambda(y)|),\,\,j=1,2\quad\quad \mathrm{and}\quad\quad \chi(y):=\chi_1(y)+\chi_2(y)$$\

We will assume that $\lambda$ is small enough to satisfy
\beq\nonumber
|F^{\lambda}_j(y)|\ge R_0+1,\quad\quad \forall\,y\in\O_{\lambda}\cap \partial B_{\rho/{\lambda}}(\xi'_j)
\eeq
Hence, the products $\chi_j Z_{1j}$ can be defined in the whole domain $\O_{\lambda}$ by continuation by
zero in $\O_\lambda\backslash B_{\rho/\lambda}(\xi'_j)$.
Moreover, by the definition of $Z_{0j}$ we may also assume that, for fixed $0<b<1$ and suitable chosen $\delta$,
$$Z_{0j}(y)\ge 1-\lambda^b,\quad\quad \forall\,y\in\O_{\lambda}\cap \partial B_{\delta/{\lambda}}(\xi'_j) $$ \\

We now define:
\begin{equation}\label{Zy}
Z(y)=\left\{
\begin{array}{lr}
\min(1-\lambda^b, Z_{0j}(y))\qquad \mbox{if}\,\, |y-\xi_j'|<\frac{\delta}{\lambda}\\\\
1-\lambda^b\qquad\qquad\qquad \mbox{if}\,\, |y-\xi_j'|\geq \frac{\delta}{\lambda}\ \hbox{for}\ j=1,2.
\end{array}
\right.
\end{equation}
\\\\

We want to solve   the following linear problem: {\it given $f\in L^\infty(\Omega_\lambda)$ and $h\in L^\infty(\partial\O_\lambda)$, find $\phi\in L^\infty(\O_\lambda)$ and $c_j\in\mathbb R$, $j=0, 1, 2$ such that}
\begin{equation}\label{lineareprimo}
\left\{
\begin{array}{lr}
-\Delta_{a_\lambda  } \phi= f\qquad\qquad\mbox{in}\,\, \O_\lambda\\\\
\displaystyle\frac{\partial\phi}{\partial\nu}-\mathcal W\phi=h+\sum_{j=1}^2 c_j \chi_j Z_{1j}+c_0\chi Z\qquad \mbox{on}\,\, \partial\O_\lambda\\\\
\displaystyle\int_{\O_\lambda}a\chi Z\phi=0\ \hbox{and}\ \displaystyle\int_{\O_\lambda}a\chi_j Z_{1j}\phi=0\ \hbox{for}\ j=1, 2.
\end{array}
\right.
\end{equation}

It is necessary to introduce some $L^\infty$-weighted norms:
 if $h\in L^\infty(\partial\O_\lambda)$ and $f\in L^\infty(\O_\lambda)$, let $$\|h\|_*=\sup_{y\in\partial\O_\lambda}\frac{|h(y)|}{\sum_{j=1}^2(1+|y-\xi_j'|)^{-1-\sigma}}\quad\hbox{and}\quad \|f\|_{**}=\sup_{y\in\O_\lambda}\frac{|f(y)|}{\sum_{j=1}^2 (1+|y-\xi_j'|)^{-2-\sigma}}$$ where $\sigma>0$ is a fixed and small number.\\
The following result holds.

 \begin{proposition}\label{esistenzaphi}
For any  $d>0,$ there exist  $\lambda_0>0$ and $C>0$ such that for any $\lambda\in (0,\lambda_0),$  for any $\xi_1, \xi_2\in\partial\O$ with $|\xi_1-\xi_2|\geq d$, for any $h\in L^\infty(\partial\O_\lambda)$ and $f\in L^\infty(\O_\lambda)$ there is a unique solution $\phi\in L^\infty(\Omega_\lambda)$ and $c_0, c_1, c_2\in\mathbb R$ to the problem \eqref{lineareprimo}.\\ Moreover,
$$\|\phi\|_{L^\infty(\O_\lambda)}\leq C \log\frac{1}{\lambda}\left(\|h\|_{*}+\|f\|_{**}\right)\quad\hbox{and}\quad\max\{|c_0|, |c_1|, |c_2|\}\leq C \left(\|h\|_*+\|f\|_{**}\right).$$
\end{proposition}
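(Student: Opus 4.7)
The argument follows the standard Lyapunov–Schmidt scheme for concentration problems. I would split the proof into three steps: (i) an a priori weighted estimate for any solution of \eqref{lineareprimo}; (ii) a bound on the multipliers $c_0,c_1,c_2$ consistent with the statement; (iii) existence via the Fredholm alternative.

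\textit{Step 1 (a priori estimate by contradiction and blow–up).} Suppose the first bound fails: there exist sequences $\lambda_n\to 0^+$, points $\xi_1^n,\xi_2^n\in\partial\om$ with $|\xi_1^n-\xi_2^n|\ge d$, data $(f_n,h_n)$ with $\|f_n\|_{**}+\|h_n\|_*\to 0$, and solutions $(\phi_n,c_0^n,c_1^n,c_2^n)$ with $\|\phi_n\|_\infty=\log(1/\lambda_n)$. Set $\tilde\phi_n:=\phi_n/\log(1/\lambda_n)$, so $\|\tilde\phi_n\|_\infty=1$. In a neighborhood of each $\xi_j^{n\prime}$ I would use the diffeomorphism $F_j^{\lambda_n}$ to flatten the boundary and then pass to the limit: by Lemma \ref{errore-lem} the weight $\mathcal W$ converges locally uniformly to $\frac{2\mu_j}{x_1^2+\mu_j^2}$ on $\partial\mathbb R^2_+$, and standard elliptic regularity up to the boundary together with Arzelà–Ascoli yield a bounded limit $\phi_j^\infty$ solving \eqref{lineare}. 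Lemma \ref{boundedlin} forces $\phi_j^\infty=\alpha_j z_{0j}+\beta_j z_{1j}$, and the two orthogonality conditions in \eqref{lineareprimo}, combined with the facts that $\chi_j Z_{1j}\to z_{1j}$ and $\chi Z\to z_{0j}$ in $L^1_{\mathrm{loc}}$ near $\xi_j'$, imply $\alpha_j=\beta_j=0$ after computing the (finite, nonzero) $L^2$–type normalizing integrals of $z_{0j}^2$ and $z_{1j}^2$ on the relevant half–spaces.

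\textit{Outer region.} Far from the concentration points, $\mathcal W(y)=O(|y|^{-2})$ and is therefore negligible against the weight $(1+|y-\xi_j'|)^{-1-\sigma}$ defining $\|\cdot\|_*$. Writing $\tilde\phi_n$ through the representation formula \eqref{reprfor} applied to $\Delta_{a_{\lambda_n}}$ on $\om_{\lambda_n}$, the integral of $a\,G_{a_{\lambda_n}}(\cdot,y)$ against $f_n$ and against the boundary data (including the trace of $\tilde\phi_n$ on small circles around the $\xi_j^{n\prime}$, which by the inner analysis tends to $0$) is estimated by a constant times $\|f_n\|_{**}+\|h_n\|_*+o(1)$. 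This yields $\tilde\phi_n\to 0$ uniformly on $\om_{\lambda_n}$, contradicting $\|\tilde\phi_n\|_\infty=1$. The factor $\log(1/\lambda)$ in the final estimate appears naturally from the logarithmic singularity of $G_{a_{\lambda}}$ scaled to $\om_\lambda$.

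\textit{Step 2 (multipliers) and Step 3 (existence).} To bound $c_j$, test the boundary equation in \eqref{lineareprimo} against $\chi_j Z_{1j}$ (resp.\ $\chi Z$) and integrate: the principal part equals $c_j\int_{\partial\om_\lambda}\chi_j Z_{1j}^2+o(1)$ because $Z_{1j}$ is, up to terms of order $\lambda^\alpha$, an eigenfunction of the linearized operator on the half-plane by Lemma \ref{boundedlin}; the right-hand side is $O(\|h\|_*+\|f\|_{**})$ thanks to the decay of $Z_{1j}$ and $Z$. Since $\int \chi_j Z_{1j}^2$ is bounded from below by an absolute positive constant, one gets the second inequality of the proposition. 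For existence, recast \eqref{lineareprimo} weakly on the Hilbert space
\[
H_\lambda:=\Bigl\{\phi\in H^1(\om_\lambda):\int_{\om_\lambda}a\,\chi Z\,\phi=0,\ \int_{\om_\lambda}a\,\chi_j Z_{1j}\,\phi=0,\ j=1,2\Bigr\},
\]
as a compact perturbation of the identity (the operator $\phi\mapsto(-\Delta_{a_\lambda})^{-1}$ with trace coupling to $\mathcal W\phi$ is compact by the trace embedding and the decay of $\mathcal W$). The uniqueness obtained in Step 1 then yields existence by the Fredholm alternative.

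\textit{Main obstacle.} The delicate point is the outer-region analysis in Step 1: extracting the precise logarithmic loss requires uniform pointwise control on $G_{a_\lambda}(x,y)$ over the expanding domains $\om_\lambda$, and a careful treatment of the coupling between the two concentration points, since the contributions of each $\xi_j^{n\prime}$ to the representation formula must be shown to decouple in the limit. The interplay between the orthogonality conditions (which are defined with cutoffs depending on $\lambda$) and the limit orthogonality against $z_{0j},z_{1j}$ on $\mathbb R^2_+$ also demands some care in choosing the cutoff radius $R_0$.
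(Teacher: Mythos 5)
Your overall strategy (contradiction, normalization, inner blow-up + outer potential-theoretic estimate) is indeed the one the paper takes, following D\'avila--del Pino--Musso--Wei. But the inner step contains a genuine gap that the paper's proof (in the Appendix) is specifically designed to avoid.

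The gap is in the claim that ``the orthogonality conditions \ldots imply $\alpha_j=\beta_j=0$.'' The two conditions $\int_{\Omega_\lambda} a\,\chi_j Z_{1j}\phi=0$, $j=1,2$, do kill the $z_1$ directions at each point. But there is only \emph{one} condition involving $Z$, namely $\int_{\Omega_\lambda} a\,\chi Z\,\phi=0$, where $\chi=\chi_1+\chi_2$; after blow-up around $\xi_1'$ and $\xi_2'$ it produces a single linear relation of the form $\alpha_1\,a(\xi_1)\int\tilde\chi\,z_{01}^2+\alpha_2\,a(\xi_2)\int\tilde\chi\,z_{02}^2=0$, not two. You cannot conclude $\alpha_1=\alpha_2=0$ from it. Moreover, the heuristic ``$\chi Z\to z_{0j}$ in $L^1_{\mathrm{loc}}$'' is not enough even morally, because $z_{0}$ does \emph{not} decay: $z_{0}(x)\to 1$ as $|x|\to\infty$, so a compactly supported cutoff $\chi_j$ gives a test function whose action on $\phi$ is not stable under the scale of the problem, and the radius $R_0$ does not tend to infinity with $\lambda$.

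This is exactly why the paper introduces the slowly expanding cutoff $p(r)$ (equal to $1$ for $r<\lambda^{-\gamma}$, decaying logarithmically to $0$ at $r=\lambda^{-\beta}$), sets $\tilde Z_{0j}(y)=z_{0j}(F_j^\lambda(y))\,p(|F_j^\lambda(y)|)$, and works with the corrected function $\tilde\phi=\phi-\sum_{j=1}^2 d_j\tilde Z_{0j}$, where the $d_j$ are chosen so that $\int_{\Omega_\lambda}a_\lambda\chi_j Z_{0j}\tilde\phi=0$ for \emph{each} $j$ — two independent orthogonalities. The contradiction argument is then split into Claim 1 ($\tilde\phi\to 0$ uniformly, using these two genuine orthogonalities in the inner blow-up) and Claim 2 ($d_j\to 0$, where the original $\int a\chi Z\phi=0$ and the $\log$-scale of the cutoff $p$ finally enter). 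Neither claim follows from your argument as written: without the correction $\tilde Z_{0j}$, the $z_0$ directions at the two points are not controlled, and the logarithmic loss does not emerge from a clean estimate.

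Your Step~2 (testing against $\chi_j Z_{1j}$ and $\chi Z$ to bound the multipliers) and Step~3 (Fredholm alternative once uniqueness is known) are in the right spirit and essentially match the scheme in D\'avila et al., so the main thing to repair is the inner analysis: adopt the decomposition $\tilde\phi=\phi-\sum_j d_j\tilde Z_{0j}$ with the $\lambda$-dependent cutoff, and prove the two claims separately.
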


\begin{proof}
We argue as in the proof of Proposition 1 and Proposition 2 of \cite{davila}. We only point out
$$\Delta_{a_\lambda}\phi(y)=\Delta\phi(y)+\lambda{\nabla a(\lambda y)\over a(\lambda y)}\phi(y),\ y\in{\Omega/\lambda}.$$
Moreover,   the proof exploits a potential theory argument where Green's function for the Laplacian is replaced by Green's function $G_a$ whose regular part is studied in Lemma \ref{greena}.

\end{proof}

\subsection{The non linear problem with constraints}
\label{nonlinprob}
In order to solve our problem we need to split the error term $\phi$ in \eqref{vscaled} as
 $\phi(y)=\tau Z(y)+\phi_1(y)$  where the function $Z$ is defined in \eqref{Zy},
$\tau=\tau(\lambda)$ is a small parameter and $\phi_1$ satisfies the orthogonal conditions
$$\displaystyle\int_{\O_\lambda}a\chi Z\phi_1=0\ \hbox{and}\ \displaystyle\int_{\O_\lambda}a\chi_j Z_{1j}\phi_1=0\ \hbox{for}\ j=1, 2.
$$
Therefore, the function $v$ in \eqref{vscaled} reads as
$$
v(y)=V_1(y)+\phi_1(y),\quad \hbox{where}\ V_1(y)=V(y)+\tau Z(y)\qquad y\in\O_\lambda.
$$
Moreover, $ v$ is a solution for \eqref{problemarisc} if and only if $ \phi_1$ solves
$$
\left\{
\begin{array}{lr}
-\Delta_{a_\lambda}  \phi_1=\tau \nabla \log a_\lambda\cdot\nabla Z\qquad \mbox{in}\,\,\ \O_\lambda\\\\
\displaystyle\frac{\partial  \phi_1}{\partial\nu}-\mathcal W_1  \phi_1= \mathcal R_1 +\mathcal N_1( \phi_1)\qquad \mbox{on}\,\,\partial\O_\lambda
\end{array}
\right.
$$
where (see also  \eqref{W}, \eqref{errore} and \eqref{N})
\beq\label{w1}
\mathcal W_1(y):=2\lambda^2 \cosh V_1(y)
\eeq
\beq\label{r1}
\mathcal R_1(y): =-\left[\frac{\partial V_1}{\partial\nu}-2\lambda^2 \sinh V_1\right](y)
\eeq
and
\beq\label{n1}
\mathcal N_1( \phi_1)=2\lambda^2\left[\sinh(V_1+ \phi_1)-\sinh V_1 -\cosh(V_1) \phi_1\right].
\eeq
It is important to point out that, since $Z(y)=O(1)$ on all $\Omega_\lambda,$ it follows that  $V_1(y)= V(y)+ O(|\tau|)$ for any $ y\in\O_\lambda. $
\\

Let us consider first the following auxiliary problem
\begin{equation}\label{nonlinaux}
\left\{
\begin{array}{lr}
-\Delta_{a_\lambda} \phi_1= \tau\nabla\log a_\lambda\cdot\nabla Z\qquad \qquad\qquad\qquad\qquad \mbox{in}\,\,\ \O_\lambda\\\\
\displaystyle\frac{\partial\phi_1}{\partial\nu}-\mathcal W_1 \phi_1= \mathcal R_1+\mathcal N_1(\phi_1) + c_0\chi Z+ c_1\chi_1 Z_{11}+c_2\chi_2 Z_{12}\qquad \mbox{on}\,\,\ \partial\O_\lambda\\\\
\displaystyle\int_{\O_\lambda}a\chi_j Z_{1j}\phi_1\, dx=0\  j=1,2, \quad \displaystyle\int_{\O_\lambda}a\chi Z\phi_1\, dx=0.
\end{array}
\right.
\end{equation}
where  $\mathcal W_1$, $\mathcal R_1$ and $\mathcal N_1$ are defined in \eqref{w1}, \eqref{r1} and \eqref{n1} respectively.\\
\begin{lemma}\label{lemma8}
Let $\alpha\in (0, 1)$, $d>0$   and $\tau=O(\lambda^\beta)$ with $\beta>\frac{\alpha}{2}$. Then there is $\lambda_0>0$ and $C>0$ such that for any $\lambda\in (0, \lambda_0) $ and for any $\xi_1, \xi_2\in\partial\O$ with $|\xi_1-\xi_2|\geq d$, problem \eqref{nonlinaux} has a unique solution $\phi_1\in L^\infty(\Omega_\lambda)$ and $ c_0, c_1, c_2\in\mathbb R$ such that
$$
\|\phi_1\|_{L^\infty(\O_\lambda)}\leq C \lambda^\alpha.
$$
Furthermore, the function $(\tau, \xi'_1,\xi'_2) \to \phi_1(\tau,\xi'_1,\xi'_2)\in L^\infty( \O_\lambda)$ is $ C^1$ and
$$
\|D_{ (\xi'_1,\xi'_2)}\phi_1\|_{L^\infty(\O_\lambda)}\leq C \lambda^\alpha\ \hbox{and}\ \|D_{\tau}\phi_1\|_{L^\infty(\O_\lambda)}\leq C \lambda^{\beta_1}\ \hbox{for some}\ \beta_1<\beta.
$$

\end{lemma}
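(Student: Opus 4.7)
\emph{Proof plan.} The strategy is to recast \eqref{nonlinaux} as a fixed-point equation for a contraction on a small ball in $L^\infty(\Omega_\lambda)$, using Proposition \ref{esistenzaphi} to invert the linear part, and then to appeal to the implicit function theorem for the parameter dependence.

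For fixed $(\tau,\xi'_1,\xi'_2)$ and $\phi\in L^\infty(\Omega_\lambda)$, let $\mathcal{T}(\phi)$ denote the unique solution (together with multipliers $c_0,c_1,c_2$) of the linear problem \eqref{lineareprimo} with interior source $f=\tau\nabla\log a_\lambda\cdot\nabla Z$ and boundary datum $h=\mathcal R_1+\mathcal N_1(\phi)$. A fixed point of $\mathcal T$ then solves \eqref{nonlinaux}, and Proposition \ref{esistenzaphi} provides the \emph{a priori} bound
$$
\|\mathcal{T}(\phi)\|_{L^\infty(\Omega_\lambda)}\le C\log(1/\lambda)\bigl(\|\mathcal R_1\|_{*}+\|\mathcal N_1(\phi)\|_{*}+\|f\|_{**}\bigr).
$$
The ingredients would be controlled as follows: since $\nabla\log a_\lambda(y)=\lambda(\nabla a/a)(\lambda y)$ and $\nabla Z$ is bounded with compact support in small neighbourhoods of the $\xi'_j$, one has $\|f\|_{**}\le C\lambda|\tau|$. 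For $\mathcal R_1$, Taylor-expanding $\sinh V_1=\sinh(V+\tau Z)$, invoking Lemma \ref{errore-lem} for the $\tau$-independent piece, and using the assumption $\beta>\alpha/2$ to absorb the $\tau^2$-correction, yields $\|\mathcal R_1\|_{*}\le C\lambda^\alpha$. Finally, the integral form of Taylor's remainder for $\sinh$ gives
$$
|\mathcal N_1(\phi)(y)|\le \lambda^2\cosh(V_1+|\phi|)\,\phi^2\le C\,\mathcal W(y)\,\|\phi\|_\infty^2,
$$
so \eqref{W-esti} together with $0<\sigma<1$ produces $\|\mathcal N_1(\phi)\|_{*}\le C\|\phi\|_\infty^2$.

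With these estimates in hand, on the ball $\mathcal B_\lambda=\{\phi:\|\phi\|_\infty\le\lambda^\alpha\}$ (after, if necessary, replacing $\alpha$ by a slightly smaller exponent to absorb the $\log(1/\lambda)$ loss from the linear estimate) the operator $\mathcal T$ is a self-map. An analogous Taylor computation yields $\|\mathcal N_1(\phi)-\mathcal N_1(\tilde\phi)\|_{*}\le C\lambda^\alpha\|\phi-\tilde\phi\|_\infty$, so $\mathcal T$ is a contraction on $\mathcal B_\lambda$ for $\lambda$ small, and Banach's theorem provides the unique fixed point $\phi_1$ with $\|\phi_1\|_\infty\le C\lambda^\alpha$.

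For the $C^1$ dependence on $(\tau,\xi'_1,\xi'_2)$, I would apply the implicit function theorem to $F(\phi;\tau,\xi'_1,\xi'_2):=\phi-\mathcal T(\phi;\tau,\xi'_1,\xi'_2)$; since $\mathcal T$ is a contraction, $\partial_\phi F$ is boundedly invertible, so $\phi_1$ depends in a $C^1$ fashion on the parameters. Differentiating the fixed-point equation in $\xi'_j$ and re-applying Proposition \ref{esistenzaphi} to the resulting linear problem gives $\|D_{\xi'_j}\phi_1\|_\infty\le C\lambda^\alpha$ with essentially the same scaling as for $\phi_1$ itself. The delicate point, and the main technical obstacle, is the $\tau$-derivative: a direct estimate of $\partial_\tau\mathcal R_1=-\partial_\nu Z+\mathcal W_1 Z+\text{higher order}$ would only yield $\|D_\tau\phi_1\|_\infty\lesssim\log(1/\lambda)$. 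To recover a decaying factor $\lambda^{\beta_1}$ with $\beta_1<\beta$ one must exploit that $Z$ solves the model linearized equation up to a small error (cf.\ Lemma \ref{boundedlin}), so that $\mathcal W Z-\partial_\nu Z$ is negligible and $\partial_\tau\mathcal R_1$ reduces to $(\mathcal W_1-\mathcal W)Z$ plus lower-order terms, carrying an extra factor of $\tau=O(\lambda^\beta)$; the gap between $\beta$ and $\beta_1$ accommodates precisely the $\log(1/\lambda)$ loss in the linear estimate.
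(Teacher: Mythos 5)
Your overall strategy coincides with the paper's: both recast \eqref{nonlinaux} as a fixed--point problem for the solution operator of the linear problem \eqref{lineareprimo} supplied by Proposition \ref{esistenzaphi}, run a contraction argument on the ball $\{\phi:\ \|\phi\|_{L^\infty(\O_\lambda)}\le\lambda^\alpha\}$, and obtain the $C^1$ dependence on $(\tau,\xi_1',\xi_2')$ from the implicit function theorem. There is, however, one concrete slip in your setup. The linear problem \eqref{lineareprimo} carries the potential $\mathcal W$ of \eqref{W}, built from $V$, whereas the boundary equation in \eqref{nonlinaux} carries $\mathcal W_1=2\lambda^2\cosh V_1$ with $V_1=V+\tau Z$. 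A fixed point of your operator $\mathcal T$ (defined with boundary datum $h=\mathcal R_1+\mathcal N_1(\phi)$) therefore satisfies $\partial_\nu\phi-\mathcal W\phi=\mathcal R_1+\mathcal N_1(\phi)+\cdots$, which is \emph{not} the equation in \eqref{nonlinaux}: the term $(\mathcal W_1-\mathcal W)\phi$ is unaccounted for. The paper repairs exactly this point by writing $\mathcal W_1=\mathcal W+\tau B$ with $\tau B=2\lambda^2\sinh(V)\tau Z+\tau^2\lambda^2\cosh(V+\bar\tau Z)Z^2$ and $\|B\|_*\le C$, and by feeding $h=\tau B\phi_1+\mathcal R_1+\mathcal N_1(\phi_1)$ into Proposition \ref{esistenzaphi}. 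Since $|\tau|\,\|B\phi_1\|_*\le C\lambda^{\beta}\|\phi_1\|_{L^\infty(\O_\lambda)}=O(\lambda^{\alpha+\beta})$ on the ball, this extra term is harmless for both the self--map and the Lipschitz estimates, so your argument goes through once it is restored.

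The remaining ingredients match the paper's up to bookkeeping: the paper bounds $\|f\|_{**}=O(\tau\lambda^{a-\sigma})$ and $\|\mathcal R_1\|_*+\|\mathcal N_1(\phi_1)\|_*+|\tau|\|B\phi_1\|_*\le C(\lambda^{a-\sigma}+\lambda^{2\beta}\lambda^{\alpha+\beta}+\lambda^{2\alpha})$, where the loss of $\lambda^{-\sigma}$ you gloss over is precisely why one must take $a-\sigma>\alpha$; your parenthetical remark about shrinking the exponent to absorb the $\log(1/\lambda)$ factor covers the same issue and is acceptable. For the $C^1$ statement and in particular the bound $\|D_\tau\phi_1\|_{L^\infty(\O_\lambda)}\le C\lambda^{\beta_1}$ with $\beta_1<\beta$, the paper gives no details and simply defers to Lemma 8 of the reference of D\'avila--del Pino--Musso--Wei, so there is nothing in the text to check your heuristic against; your observation that one must use the near--kernel property of $Z$ (so that $\partial_\nu Z-\mathcal W Z$ is small) rather than a crude bound on $\partial_\tau\mathcal R_1$ is the right mechanism.
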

\begin{proof}
We argue  as in the proof of Lemma 8 of \cite{davila}.  The only difference is due to the presence of the R.H.S. $f=\tau\nabla\log a_\lambda\cdot\nabla Z$ in  \eqref{nonlinaux}. \\ Indeed, first, we point out that
 $$\mathcal W_1(y)=\mathcal W(y)+\underbrace{2\lambda^2\sinh (V)\tau Z +\tau^2\lambda^2\cosh (V+\bar \tau Z)Z^2}_{:=\tau B},$$ where
 $\mathcal W$ is defined in \eqref{W} and $|\bar\tau|\leq |\tau|$. It is easy to check that  $\|B\|_*\leq C.$  Then we write the problem \eqref{nonlinaux} in terms of the operator $\mathcal A$ that associates to any $\phi_1\in L^\infty(\O_\lambda)$ the unique solution given by Proposition \ref{esistenzaphi} with $h=\tau B\phi_1+\mathcal R_1+\mathcal N_1(\phi_1)$ and $f=\tau\nabla\log a_\lambda\cdot\nabla Z$. In terms of $\mathcal A$, the problem \eqref{nonlinaux} is equivalent to the fixed point problem  $\phi_1=\mathcal A(\phi_1).$ Therefore, we are going to prove that $\mathcal A$ is a contraction mapping of   the set $$\mathcal C\equiv\left\{\phi\in C(\bar\O_\lambda)\,\,:\,\, \|\phi\|_{L^\infty(\O_\lambda)}\leq \lambda^\alpha\right\}.$$
From Proposition \ref{esistenzaphi} we get
$$\|\mathcal A(\phi_1)\|_{L^\infty(\O_\lambda)}\leq C |\log\lambda|\left[\underbrace{|\tau|\|B\phi_1\|_*+\|\mathcal N_1(\phi_1)\|_*+\|\mathcal R_1\|_*}_{:=\mathcal D}+\|f\|_{**}\right].$$
Arguing as in  \cite{davila} we get that $$\|\mathcal D\|_*\leq C\left(\lambda^{a-\sigma}+\lambda^{2\beta}\lambda^{\alpha+\beta}+\lambda^{2\alpha}\right)$$ for some $a\in(0,1)$ and $\sigma>0$ small so that $a-\sigma>\alpha$ ($\sigma$ is the number in the definition of $\|\cdot\|_*$, $\|\cdot\|_{**}$  and $\beta$  is such that $\tau=O(\lambda^\beta)$). On the other hand it is easy to check that
$$|f(y)|=O\left(\lambda \tau |\nabla Z(y)|\right)=O\left(\lambda^{1+a} \tau  \sum\limits_{j=1}^2 (1+|y-\xi_j'|)^{-1}\right)\ \hbox{for any}\ y\in \O_\lambda$$
and so
 $$\|f\|_{**}=O(\tau\lambda^{a-\sigma})=O(\lambda^{\beta+a-\sigma}).$$
 Then the proof  follows exactly as in   Lemma 8 of \cite{davila}.
\end{proof}

Next, we have to choose the parameter $\tau$ so that
 the nonlinear problem \eqref{nonlinaux} has a solution with $c_0=0.$ This is the result of next lemma whose proof can be carried out exactly as the proof of Lemma 9 in \cite{davila}.
\begin{lemma}\label{lemma9}
Let $d>0$. For any $\alpha\in (0, 1)$, there exist $\lambda_0>0$ and $C>0$ such that for $\lambda\in (0, \lambda_0)$, and any $\xi_1, \xi_2\in\partial\O$ with $|\xi_1-\xi_2|\geq d$, there exists a unique $\tau$ with $|\tau|< C\lambda^{\alpha-b/2}$ ($b$ is given in \eqref{Zy}), such that problem \eqref{nonlinaux} admits a unique solution $\phi_1\in L^\infty(\O_\lambda),$   $c_0=0$ and $ c_1, c_2\in\mathbb R.$ Moreover
\beq\label{stimaphi}
\|\phi\|_{L^\infty(\O_\lambda)}\leq C \lambda^\alpha
\eeq
and the function $(\xi'_1,\xi'_2) \to \phi_1(\xi'_1,\xi'_2)$ is $C^1$ and
\beq\label{stimederphi}
\|D_{ (\xi'_1,\xi'_2)}\phi_1\|_{L^\infty(\O_\lambda)}\leq C \lambda^\alpha.
\eeq
\end{lemma}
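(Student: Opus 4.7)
The plan is to view problem \eqref{nonlinaux} as a one-parameter family indexed by $\tau$, with $(\phi_1^\tau,c_0^\tau,c_1^\tau,c_2^\tau)$ the solution supplied by Lemma \ref{lemma8}, and to choose $\tau=\tau(\lambda)$ so that $c_0^\tau=0$. Since the sole purpose of $Z$ is to absorb the single ``almost kernel'' direction of the linearized operator that is not already killed by the two genuine kernel modes $\chi_j Z_{1j}$, this reduces to a scalar fixed-point equation in $\tau$, following the scheme of Lemma 9 of \cite{davila}.

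\emph{Extracting $c_0^\tau$.} Multiplying the boundary equation in \eqref{nonlinaux} by $a\chi Z$, integrating on $\partial\O_\lambda$ and using (i) the orthogonality $\int_{\O_\lambda}a\chi Z\phi_1^\tau=0$, (ii) the near-orthogonality of $\chi Z$ with $\chi_jZ_{1j}$, and (iii) integration by parts together with $\Delta_{a_\lambda}\phi_1^\tau=-\tau\nabla\log a_\lambda\cdot\nabla Z$, one obtains a representation
$$
c_0^\tau\,\Lambda_\lambda = \int_{\partial\O_\lambda}a\chi Z\bigl[-\mathcal R_1-\mathcal N_1(\phi_1^\tau)+\mathcal W_1\phi_1^\tau\bigr]\,d\sigma+\tau\int_{\O_\lambda}a\chi Z\,\nabla\log a_\lambda\cdot\nabla Z\,dx+o(1),
$$
with $\Lambda_\lambda=\int_{\partial\O_\lambda}a(\chi Z)^2\,d\sigma$ of order $1$. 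Using $V_1=V+\tau Z$ and Taylor expansion yields
$$
\mathcal R_1=\mathcal R-\tau\bigl(\partial_\nu Z-\mathcal W Z\bigr)+\tau^2 E(\tau,\lambda),\qquad \|E(\tau,\lambda)\|_*\le C\log\tfrac1\lambda,
$$
which isolates the dominant $\tau$-contribution in the representation of $c_0^\tau$.

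\emph{The key lower bound.} Thanks to the truncation $1-\lambda^b$ in the definition \eqref{Zy}, the function $Z$ is \emph{not} exactly a kernel element of the linearization around $w_{0,\mu}$: the residual $\partial_\nu Z-\mathcal W Z$, tested against $a\chi Z$, yields a coefficient of $\tau$ bounded below by $c_*\lambda^{b}$ for some $c_*>0$, uniformly in $\lambda$ and in $(\xi_1,\xi_2)$ with $|\xi_1-\xi_2|\ge d$. This is the same computation carried out in the unweighted case in Lemma 9 of \cite{davila}; the anisotropic weight $a_\lambda=a(\lambda\cdot)$ affects it only through lower-order terms, since $a_\lambda\to a(\xi_j)>0$ on the relevant scale, and the additional volume contribution above is of size $O(\lambda\tau)$ by Lemma \ref{greena}.

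\emph{Fixed point for $\tau$ and regularity.} Combining the two previous steps, the equation $c_0^\tau=0$ becomes
$$
\tau\bigl(\alpha_\lambda+o(1)\bigr)=\Psi(\tau,\lambda),\qquad \alpha_\lambda\ge c_*\lambda^{b},
$$
where, by \eqref{R}, Lemma \ref{errore-lem}, the estimate $\|\phi_1^\tau\|_{L^\infty}\le C\lambda^\alpha$ and the Lipschitz control on $\mathcal N_1$, one has $|\Psi(\tau,\lambda)|\le C\lambda^{\alpha+b/2}$, while the bound $\|D_\tau\phi_1\|_{L^\infty}\le C\lambda^{\beta_1}$ from Lemma \ref{lemma8} makes $\Psi(\cdot,\lambda)$ Lipschitz with small constant on $\{|\tau|\le C\lambda^{\alpha-b/2}\}$. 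The Banach fixed-point theorem then provides a unique $\tau^*$ with $|\tau^*|\le C\lambda^{\alpha-b/2}$, and the implicit function theorem applied to $c_0(\tau,\xi_1',\xi_2')=0$---invertibility in $\tau$ being ensured by $\alpha_\lambda\gtrsim\lambda^{b}$---combined with the $C^1$ dependence asserted in Lemma \ref{lemma8}, yields $\tau^*\in C^1$ in $(\xi_1',\xi_2')$; the estimates \eqref{stimaphi}--\eqref{stimederphi} then follow by the chain rule. The principal difficulty of the whole argument is the uniform lower bound $\alpha_\lambda\gtrsim\lambda^{b}$: without it, $c_0^\tau$ is not invertible in $\tau$ and the reduction scheme collapses, which is precisely what motivates the specific form of the truncation in \eqref{Zy}.
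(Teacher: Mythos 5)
Your overall architecture is the right one and is essentially what the paper intends (the paper itself gives no proof here, deferring entirely to Lemma 9 of \cite{davila}): regard \eqref{nonlinaux} as a family parametrized by $\tau$, extract $c_0(\tau)$ by projection, and solve the scalar equation $c_0(\tau)=0$ by a fixed point, the whole argument hinging on a sign-definite lower bound for the coefficient of $\tau$. However, the justification you give for that pivotal lower bound --- which you yourself single out as the principal difficulty --- does not work. You claim that $\int_{\partial\O_\lambda}a\chi Z\left(\partial_\nu Z-\mathcal W Z\right)$ is bounded below by $c_*\lambda^{b}$ ``thanks to the truncation $1-\lambda^b$''. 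But the truncation in \eqref{Zy} is active only where $Z_{0j}\ge 1-\lambda^b$, i.e. at distances $|y-\xi_j'|\gtrsim\mu_j\lambda^{-b/2}$, whereas $\chi_j$ is supported in the fixed bounded set $\{|F_j^\lambda(y)|\le R_0+1\}$. Hence on the support of $\chi$ one has $Z\equiv Z_{0j}$ for $\lambda$ small, and there $\partial_\nu Z-\mathcal W Z$ is nothing but the transplantation error of the exact kernel element $z_{0j}$ (coming from $F_j^\lambda$ not being the identity and from $\theta_\lambda$ in \eqref{W-esti}); it is $O(\lambda^{\alpha})$ pointwise with no definite sign, so no lower bound of the form $c_*\lambda^{b}$ can be extracted from it.

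The invertibility in $\tau$ has to come from the opposite region. Testing with $Z$ on all of $\partial\O_\lambda$ (rather than with the localized $\chi Z$), the set $\{|y-\xi_j'|>\mu_j\lambda^{-b/2}\}$, where $Z\equiv1-\lambda^b$ is constant and $\partial_\nu Z=0$, contributes $-\int (1-\lambda^b)^2\,a\,\mathcal W\sim -c\,\lambda^{b/2}$ with a definite sign, since $\mathcal W\sim 2\mu_j|y-\xi_j'|^{-2}$ there. This is also what makes the size of $\tau$ come out right: the projection of $\mathcal R_1+\mathcal N_1(\phi_1)+\mathcal W_1\phi_1$ onto this direction is $O(\lambda^\alpha\log\frac1\lambda)$ --- your bound $|\Psi(\tau,\lambda)|\le C\lambda^{\alpha+b/2}$ is not justified either, since already $\int a\chi Z\,\mathcal R=O(\lambda^\alpha)$ by \eqref{R} --- and dividing by a coefficient of order $\lambda^{b/2}$ gives $|\tau|\lesssim\lambda^{\alpha-b/2}$ as stated. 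With your orders ($\lambda^{\alpha+b/2}$ over $\lambda^{b}$) the numerology coincidentally matches, but numerator and denominator are each off by $\lambda^{b/2}$, and, more importantly, the sign and lower bound of the denominator are unproven as you have set things up. You need to redo the extraction of $c_0$ with the correct test function and identify the far-field contribution of $\mathcal W Z$ as the leading, sign-definite coefficient of $\tau$.
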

\subsection{The reduced problem and proof of Theorem \ref{principale}}
\label{redprob}

For any $ (\xi_1, \xi_2)\in\partial\O\times\partial\O$ with $
\xi_1\not=\xi_2$, we define $ \phi (\xi_1, \xi_2) $ and $ c_j
(\xi_1, \xi_2)$ for $j=1,2, $ to be the unique solution to
\eqref{nonlinaux} with $c_0=0$ satisfying \eqref{stimaphi} and
\eqref{stimederphi}.  In this section we shall find the points
$\xi_1$ and $\xi_2$ on the boundary $\partial\O$ such that
$c_1=c_2=0.$ That choice will provide a solution to  our
problem.\\\\

\begin{lemma}\label{cru}
Let $\alpha\in (\frac 12, \frac 34)$ and $b\in (2(1-\alpha), 1)$  ($b$ is given in \eqref{Zy}).
It holds true that
$$c_i=\tau\lambda\left[-{\mu_i\over2 a(\xi_i)} \nabla_{{\partial\Omega}}a(\xi_i)+o(1)\right],\ i=1,2$$
uniformly with respect to   $(\xi_1, \xi_2)\in
\partial\O\times\partial\O$ with $\xi_1\not=\xi_2.$
\end{lemma}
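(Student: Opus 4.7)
I would test the system \eqref{nonlinaux} against the approximate translation mode $\chi_i Z_{1i}$. Multiplying the interior equation $-\Delta_{a_\lambda}\phi_1=\tau\,\nabla\log a_\lambda\cdot\nabla Z$ by $\chi_i Z_{1i}$, using the identity $a_\lambda\Delta_{a_\lambda}(\cdot)=\mathrm{div}(a_\lambda\nabla(\cdot))$, integrating by parts twice, and substituting the boundary condition, the Lagrange multipliers decouple because $\mathrm{supp}\,\chi_1$ and $\mathrm{supp}\,\chi_2$ are disjoint for small $\lambda$ (their supports lie in $O(1)$-neighbourhoods of $\xi'_1,\xi'_2$ whereas $|\xi'_1-\xi'_2|\geq d/\lambda$). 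One is led to the identity
\[
c_i\int_{\partial\Omega_\lambda}\!a_\lambda\chi_i^2 Z_{1i}^2\,ds = -\tau\int_{\Omega_\lambda}\chi_i Z_{1i}\,\nabla a_\lambda\cdot\nabla Z\,dy+\mathcal E_i,
\]
where
\[
\mathcal E_i = \int_{\partial\Omega_\lambda}\!a_\lambda\phi_1\bigl[\partial_\nu(\chi_i Z_{1i})-\mathcal W_1\chi_i Z_{1i}\bigr]ds - \int_{\Omega_\lambda}\!\phi_1\,\mathrm{div}\bigl(a_\lambda\nabla(\chi_i Z_{1i})\bigr)dy - \int_{\partial\Omega_\lambda}\!a_\lambda\chi_i Z_{1i}[\mathcal R_1+\mathcal N_1(\phi_1)]\,ds.
\]

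Next I would evaluate the principal term through the area-preserving change of variable $w=F_i^\lambda(y)$, which locally identifies $\Omega_\lambda$ near $\xi'_i$ with $\mathbb R^2_+$. On $\mathrm{supp}\,\chi_i$ one has $\nabla a_\lambda(y)=\lambda\nabla a(\xi_i)+O(\lambda^2)$ and, since $Z=Z_{0i}$ there, $\nabla_y Z = DF_i(\xi_i)^{\top}\nabla_w z_{0i}(w)+O(\lambda)$. Writing $DF_i(\xi_i)^{\top}\nabla a(\xi_i)$ in the frame where the $w_1$-axis is tangent to $\partial\mathbb R^2_+$, the normal component multiplies $\int_{\mathbb R^2_+}z_{1i}\,\partial_{w_2}z_{0i}=0$ by odd symmetry in $w_1$, while the tangential component multiplies the explicit integral
\[
\int_{\mathbb R^2_+} z_{1i}(w)\,\partial_{w_1}z_{0i}(w)\,dw=-\pi,
\]
obtained via the substitution $w_1=(w_2+\mu_i)\tan\theta$. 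Combined with $\int_{\partial\Omega_\lambda}a_\lambda\chi_i^2 Z_{1i}^2\,ds\to a(\xi_i)\cdot 2\pi/\mu_i$ (which follows from $\int_{\partial\mathbb R^2_+} z_{1i}^2\,dw_1=2\pi/\mu_i$), one extracts the stated leading coefficient $-\mu_i/(2a(\xi_i))\,\nabla_{\partial\Omega} a(\xi_i)$, with the overall sign fixed by the orientation convention of $F_i$.

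The hard part is to show $\mathcal E_i=o(\tau\lambda)$. A naive application of $\|\phi_1\|_\infty=O(\lambda^\alpha)$ (Lemma \ref{lemma9}) together with the pointwise estimate $|\mathcal R_1|\leq C\lambda^\alpha/(1+|y-\xi'_i|)$ coming from Lemma \ref{errore-lem} only produces residuals of order $\lambda^\alpha$, which are far too large. The crucial gain comes from parity: to leading order the approximate solution $V_1$ is radially symmetric about each bubble centre, so $\mathcal R_1$, $\mathcal W_1$ and $(\partial_\nu-\mathcal W_1)(\chi_i Z_{1i})$ are even functions of the local tangent variable $w_1$, while $Z_{1i}=z_{1i}(F_i^\lambda(\cdot))$ is odd in $w_1$; hence these pairings vanish at leading order and contribute only through symmetry-breaking terms (coming from $\nabla a\neq 0$ and from the curvature of $\partial\Omega$) which carry an additional factor $O(\lambda)$. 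Combined with the quadratic control $\mathcal N_1(\phi_1)=O(\mathcal W_1\phi_1^2)$, the annular control of the cutoff contributions produced by $\nabla\tilde\chi$, and the parameter constraints $\alpha\in(1/2,3/4)$, $b>2(1-\alpha)$, $|\tau|\leq C\lambda^{\alpha-b/2}$, every summand of $\mathcal E_i$ ends up dominated by $\tau\lambda$. This bookkeeping parallels the corresponding step in \cite{davila} and is the only genuinely delicate point.
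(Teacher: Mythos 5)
Your overall strategy is the same as the paper's: testing \eqref{nonlinaux} against $a_\lambda\chi_i Z_{1i}$, integrating by parts, and recognizing that the leading contribution comes from the interior source term $\tau\,\nabla\log a_\lambda\cdot\nabla Z$ (the paper's $I_2$). Your evaluation of the leading term — the change of variables $F_i^\lambda$, the expansion $\nabla a_\lambda=\lambda\nabla a(\xi_i)+O(\lambda^2)$, the vanishing of the normal component against $\int_{\mathbb R^2_+}z_{1i}\,\partial_{w_2}z_{0i}=0$ by oddness in $w_1$, the identity $\int_{\mathbb R^2_+}z_{1i}\,\partial_{w_1}z_{0i}=-\pi$, and the normalization $\int a_\lambda\chi_i^2Z_{1i}^2\to 2\pi a(\xi_i)/\mu_i$ — matches the paper's computation of $I_0$ and $I_2$ exactly, and gives the correct coefficient $-\mu_i/(2a(\xi_i))\,\nabla_{\partial\Omega}a(\xi_i)$.

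The one place where your argument departs from (and slightly misreads) the paper is the bookkeeping of $\mathcal E_i$. You attribute the entire gain to "parity", asserting that $\mathcal R_1$, $\mathcal W_1$ \emph{and} $(\partial_\nu-\mathcal W_1)(\chi_iZ_{1i})$ are even in the tangent variable $w_1$. Two corrections are in order. First, $(\partial_\nu-\mathcal W_1)(\chi_iZ_{1i})$ is \emph{odd} in $w_1$ (the odd factor $Z_{1i}$ is not cancelled by the even $\mathcal W_1$ or by $\partial_\nu$); more importantly, this term is paired with $\phi_1$, not with $Z_{1i}$, and $\phi_1$ has no useful symmetry, so parity is unavailable for it. The paper handles those contributions (the terms the paper calls $I_1,I_3,I_7$, and the $\tau$-dependent $I_5,I_6$) by brute-force size estimates: $\|\phi_1\|_\infty\le C\lambda^\alpha$ combined with $(\partial_\nu-\mathcal W_1)(\chi_jZ_{1j})=O(\lambda^\alpha/(1+r))$ gives a \emph{quadratic} gain $O(\lambda^{2\alpha}\log\frac1\lambda)$, which is $o(\lambda^{1+\alpha-b/2})=o(\tau\lambda)$ precisely because of the constraint $b>2(1-\alpha)$ — no oddness is needed there. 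Parity (in the guise of an integration by parts that exploits the oddness of $z_{1j}$ and the slow variation of $z^{\xi_j}_\lambda(\lambda\cdot)$) is used in the paper only for the single term $\int a_\lambda\chi_jZ_{1j}\mathcal R_1$ (the paper's $I_4$), because that term carries no factor of $\phi_1$ and a naive bound would indeed give only $O(\lambda^\alpha)$, as you observe. So your instinct about which term is dangerous is right, but the way you present the mechanism conflates it with terms where the quadratic $\phi_1$-estimate, not symmetry, does the job.
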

\begin{proof}
We multiply equation in \eqref{nonlinaux} by $a _\lambda
 \chi_jZ_{1j}$, $j=1, 2$ and we integrate in $y$. We take into account that $V_1=V+\tau Z$ and $\tau$ is choosen so that $c_0=0$ (see Lemma \ref{lemma9}).
Therefore we get
\begin{eqnarray*}
\int_{\O_\lambda}\tau\(\nabla a_\lambda\nabla Z\)\chi_jZ_{1j}dy&=&-\int_{\O_\lambda}a_\lambda \Delta_{a_\lambda}\(\chi_jZ_{1j}\)\phi_1dy\\
& &+\int_{\po}a_\lambda\phi_1\partial_\nu\(\chi_jZ_{1j}\)dy-\int_{\po}a_\lambda \chi_jZ_{1j}\partial_\nu\phi_1 dy
\end{eqnarray*}
and so
\begin{eqnarray}\label{RHS}
& &\sum\limits_{i=1}^2 c_i \underbrace{\int_{\po}a(\lambda y)\chi_i\chi_j Z_{1i}Z_{1j}dy}_{I_0} \\ & &=-\int_{\O_\lambda}a_\lambda \Delta_{a_\lambda}\(\chi_jZ_{1j}\)\phi_1dy-\int_{\O_\lambda}\tau\(\nabla a_\lambda\nabla Z\)\chi_jZ_{1j}dy\nonumber\\
& &+\int_{\po}a_\lambda\phi_1\partial_\nu\(\chi_jZ_{1j}\)dy-\int_{\po}a_\lambda \chi_jZ_{1j}\(\mathcal W_1\phi_1+\mathcal R_1+\mathcal N_1(\phi_1)\) dy\nonumber\\
&&=-\underbrace{\int_{\O_\lm} {a_\lm}\Delta_{a_\lambda}\(\chi_jZ_{1j}\)\phi_1dy}_{ I_1 }-\underbrace{ \int_{\O_\lambda}\tau\(\nabla a_\lambda\nabla Z\)\chi_jZ_{1j}dy}_{ I_2 }\nonumber\\ & &+
\underbrace{\int_{\po}a_\lambda\phi_1\left[\partial_\nu\(\chi_jZ_{1j}\) -\mathcal W_1\(\chi_jZ_{1j}\)\right]dy}_{I_3}\nonumber\\ &&
+\underbrace{\int_{\po}\left[\frac{\partial V}{\partial\nu}-\lm^2\sinh(V )\right]a_\lm \chi_j
Z_{1j}dy}_{I_4}+\underbrace{\tau\int_{\po}\left(\frac{\partial
Z}{\partial\nu}-\mathcal W Z\right)a_\lm\chi_j
Z_{1j}dy}_{I_5}\nonumber\\ & &-\underbrace{2\lm^2\int_{\po}a_\lm \chi_j
Z_{1j}\left[\sinh(V+\tau Z)-\sinh(V)-\tau \cosh(V)Z\right]dy}_{I_6}\nonumber\\ && -\underbrace{\int_{\po}\mathcal
N_1(\phi_1)a_\lm \chi_j Z_{1j}dy.}_{I_7}\nonumber
\end{eqnarray}
Now let us estimate each integral $I_i$'s of \eqref{RHS}.
It is immediate to check that
$$I_0={2\pi\over\mu_i} a(\xi_i)+o(1)\ \hbox{if}\ i=j\quad\hbox{and}\quad I_0=0\ \hbox{if}\ i\not=j.$$
Indeed if $i=j$
$$\int_{\po}a(\lambda y)\chi_i\chi_j Z_{1i}Z_{1j}dy =a(\xi_i)\int_{\mathbb R}{4x_1^2\over \(x_1^2+\mu_i^2\)^2}dx_1+o(1)=\frac{2\pi a(\xi_i)}{\mu_i}+o(1)$$
We remark that
$$Z_{1j}=O\left(\frac{1}{1+r}\right)\ \hbox{and}\ \nabla Z_{1j}=O\left(\frac{1}{1+r^2}\right)\ \hbox{as}\ r\to\infty .$$
We have
\begin{eqnarray*}
 {a_\lm}\Delta_{a_\lm}(\chi_j Z_{1j})&=&\Delta\chi_j \cdot Z_{1j}+2\n\chi_j\n Z_{1j}+\chi_j\Delta Z_{1j}+\n a_\lm \n \chi_j Z_{1j}+\chi_j \n a_\lm\n Z_{1j}\\
&=&O\left(\frac{\lm^2}{1+r}\right)+O\left(\frac{\lm}{1+r^2}\right)+\chi_j\Delta Z_{1j}+O\left(\frac{\lm^2}{1+r}\right)+O\left(\frac{\lm}{1+r^2}\right)
\end{eqnarray*}
However, $$\Delta_y Z_{1j}=\Delta_x z_1+O(\lm|x||\n^2z_1|)+O(\lm|\n z_1|)$$ and hence $$\Delta Z_{1j}=O\left(\frac{\lm}{1+r^2}\right)+O\left(\frac{\lm^2}{1+r}\right)$$
hence
$$|I_1|\leq C\lm\|\phi\|_{L^\infty(\O_\lm)}\leq C \lm^{1+\alpha}=o(\lm^{1+\alpha-\frac b2}).$$

Moreover,
\begin{eqnarray*}
|I_3|& &\le \left|\int_{\po}a_\lambda\phi_1 \partial_\nu\(\chi_j\)Z_{1j}dy\right|+\left| \int_{\po}a_\lambda\phi_1\chi_j
 \left(\partial_\nu Z_{1j}  -\mathcal W_1 Z_{1j} \right)dy\right|\\
 && \le C \lm \|\phi_1\|_{L^\infty(\O_\lm)}\log\frac 1\lm+C \lm^\alpha \|\phi_1\|_{L^\infty(\O_\lm)}\log\frac 1\lm\\
 & &\le C\(\lm^{1+\alpha}\log\frac
1\lm+\lm^{2\alpha}\log\frac
1\lm\)=o(\lm^{1+\alpha-\frac b2}) ,\end{eqnarray*}
 because $\n \chi_j=O(\lm)$ and (see also (3.39) in \cite{davila})
 $$\chi_j\( \partial_\nu Z_{1j} -\mathcal W_1
Z_{1j}\)=O\left(\frac{\lm^\alpha}{1+|y-\xi_j'|}\right)$$ and since $b>2(1-\alpha)$. \\
 By \eqref{R}  we get that
$$
 \mathcal R= (z^{\xi_1}_\lm(\lambda y)-z^{\xi_2}_\lm(\lm y))e^{H_a(\lm y, \xi_j)-H_a(\xi_j, \xi_j)}+O(\lm^2)
$$
 and hence by making the change of variable $x=F_j^\lm(y)$ and by observing that $(F_j^\lm)^{-1}(x)=x+\xi_j'+O(\lm|x|)$ we get
  \begin{eqnarray*}
I_4& = &2\mu_j\int_{-\frac{\rho}{\lm}}^{\frac \rho \lm}z^{\xi_j}_\lm(\lm x+\xi_j)a(\lm x+\xi_j)e^{\lm x+ O(\lm^2|x|^2)}\partial_{x_1}\frac{1}{x_1^2+\mu_j^2}\\
&=& 2\mu_j\int_{-\frac{\rho}{\lm}}^{\frac \rho \lm}\partial_{x_1}z^{\xi_j}_\lm(\lm x+\xi_j)a(\lm x+\xi_j)e^{\lm x+ O(\lm^2|x|^2)}\frac{1}{x_1^2+\mu_j^2}\\
&&-2\mu_j\lm \int_{-\frac{\rho}{\lm}}^{\frac \rho \lm}\partial_{x_1}a(\xi_j+\lm x+O(\lm^2|x|))z^{\xi_j}_\lm(\lm x+\xi_j)e^{\lm x+ O(\lm^2|x|^2)}\frac{1}{x_1^2+\mu_j^2}\\
&&-2\mu_j\lm \int_{-\frac{\rho}{\lm}}^{\frac \rho \lm}\partial_{x_1}z^{\xi_j}_\lm(\xi_j+\lm x+O(\lm^2|x|))a(\lm x+\xi_j)e^{\lm x+ O(\lm^2|x|^2)}\frac{1}{x_1^2+\mu_j^2}\\
&&-2\mu_j \lm \int_{-\frac{\rho}{\lm}}^{\frac \rho \lm}z^{\xi_j}_\lm(\xi_j+\lm x+O(\lm^2|x|))a(\lm x+\xi_j)\partial_{x_1}e^{\lm x+ O(\lm^2|x|^2)}\frac{1}{x_1^2+\mu_j^2}\end{eqnarray*}
and so
$$|I_4|\le \lm^{1+\alpha}\log\frac{1}{\lm} =o(\lm^{1+\alpha-\frac b2}).
$$
As done in the estimates  proved in p. 211, \cite{davila} we get that
$$\chi_j\( \partial_\nu Z_{} -\mathcal W
Z_{}\)= (z^{\xi_1}_\lm(\lambda y)-z^{\xi_2}_\lm(\lm y))e^{H_a(\lm y, \xi_j)-H_a(\xi_j, \xi_j)}+O(\lm^2)$$
and so, by making computations as before we get that
 \begin{eqnarray*}
|I_5|\le  C   \lm^{1+\alpha}\tau\log\frac 1\lm=o( \lm^{1+\alpha-\frac b2}).\end{eqnarray*}
Moreover, since by mean value theorem
$$\chi_j\lm^2\left[\sinh(V+\tau Z)-\sinh(V)-\tau \cosh(V)Z\right]=\chi_j\tau^2\sinh(V+\bar\tau Z)Z^2$$
by making again the same computations as before we get we also have
\begin{eqnarray*}
|I_6|\le  C   \lm\tau^2 \log\frac 1\lm =o(\lm^{1+\alpha- \frac b 2 }) .\end{eqnarray*}
Finally, we have
$$|I_7|\leq C \|\phi\|^2_{L^\infty(\O_\lm)}\log\frac 1\lm\leq C \lm^{2\alpha}\log\frac 1\lm.$$

Therefore, it remains to estimate the leading term $I_2$ of the R.H.S. of \eqref{RHS}.
We observe that in the function $Z$ is constant except in the regions $|y-\xi_j'|<\mu_j\lambda^{-b/2},$ $j=1,2.$
For sake of simplicity, we can also assume that   the boundary of $\O$ in a neighbourhood of the point $\xi_j$ can be described as a graph of a smooth function $\varphi_j$
defined in a neighbourhood of 0 such that $\varphi_j(0)=\varphi_j'(0)=0 $ so that
$$F_j(s_1,s_2)=\(s_1,s_2-\varphi_j(s_1)\)\ \hbox{and}\ F_j^{-1}(t_1,t_2)=\(t_1,t_2+\varphi_j(t_1)\)^.$$
Let us remind that $F_j^\lm(y)=\frac{F_j(\lm y)}\lm$, then
\begin{eqnarray*}
&&I_2= \int_{\O_\lambda}\tau\(\nabla a_\lambda(y)\nabla Z(y)\)\chi_j(y)Z_{1j}(y)dy\\ & &=\tau  \int_{\O_\lm}\nabla a(\lambda y)\nabla \(z_{0j}\left(  F_j^\lm (  y)\right)\)\tilde\chi \left( \left| F_j^\lm (  y)\right|\right)z_{1j}\left(  F_j^\lm (  y)\right)dy\\
 && =\tau\int_{\O_\lm}\left\{\frac{\partial a_\lm}{\partial y_1}\left[\frac{\partial z_{0j}}{\partial x_1} \frac{\partial(F_j^\lambda)_1}{\partial y_1} +\frac{\partial z_{0j}}{\partial x_2}\frac{\partial(F_j^\lambda)_2}{\partial y_1} \right]+\frac{\partial a_\lm}{\partial y_2}\left[\frac{\partial z_{0j}}{\partial x_1} \frac{\partial(F_j^\lambda)_1}{\partial y_2} +\frac{\partial z_{0j}}{\partial x_2} \frac{\partial(F_j^\lambda)_2}{\partial y_2} \right]\right\}\times\\ &&\hskip3truecm\times
\tilde\chi \left( \left| F_j^\lm (  y)\right|\right)z_{1j}\left(  F_j^\lm (  y)\right)dy\\
&&=\tau\lambda\int_{\O_\lm}\left\{\frac{\partial a}{\partial y_1}(\lm y)\left[\frac{\partial z_{0j}}{\partial x_1}\left(  F_j^\lm (  y)\right)-\frac{\partial z_{0j}}{\partial x_2}\left(  F_j^\lm (  y)\right)\varphi'_j(\lambda y_1) \right]+\frac{\partial a}{\partial y_2}(\lm y)\frac{\partial z_{0j}}{\partial x_2} \left(  F_j^\lm (  y)\right)\right\}\times\\ &&\hskip3truecm\times
\tilde\chi \left( \left| F_j^\lm (  y)\right|\right)z_{1j}\left(  F_j^\lm (  y)\right)dy\\
&&\hbox{(we set $ F_j^\lm (  y)=x$, i.e. $y=\frac{F_j^{-1}(\lm x)}\lm$)}\\
& &\\
&&=\tau\lambda\int_{B(0,\rho)\cap\mathbb R^2_+}\left\{\frac{\partial a}{\partial y_1}(\lm x_1,\lm x_2+\varphi_j(\lm x_1))\left[\frac{\partial z_{0j}}{\partial x_1}(x)-\frac{\partial z_{0j}}{\partial x_2}\left( x\right)\varphi'_j(\lambda x_1) \right]\right\}\times
\tilde\chi \left( \left|x\right|\right)z_{1j}\left(  x\right)dx\\
&&+\tau\lm\int_{B(0,\rho)\cap\mathbb R^2_+}\left\{ \frac{\partial a}{\partial y_2}(\lm x_1,\lm x_2+\varphi_j(\lm x_1))\frac{\partial z_{0j}}{\partial x_2} (x)\right\}\times\tilde\chi \left( \left|x\right|\right)z_{1j}\left(  x\right)dx\\
&&=\tau\lambda\left(\frac{\partial a}{\partial y_1}(0)\int_{\mathbb R^2_+}\frac{\partial z_{0j}}{\partial x_1}(x)z_{1j}\left(  x\right)dx+o(1)\right)=\tau\lambda\left(\frac{\partial a}{\partial y_1}(0)\int_{\mathbb R^2_+}\(-8\mu_j\frac{x_1^2(x_2+\mu_j)}{\(x_1^2+(x_2+\mu_j)^2\)^3}\right)dx+o(1)\right)\\
& &=\tau\lambda\(-\pi\nabla_{\partial\Omega} a(\xi_j)+o(1)\)
 \end{eqnarray*}
 We point out that this is the lower order term of the R.H.S. of \eqref{RHS}, because its rate is of order $\lambda^{1+\alpha-b/2}$ because of the  choice of $\tau.$ \\

 The claim follows collecting all the previous estimates.

\end{proof}
Finally, we conclude the proof of Theorem  \ref{principale}.\\

\begin{proof}[Theorems \ref{principale}  completed ]
Let $\xi_1$ and $ \xi_2$ be  two different $C^1-$stable critical points
 of $a $ restricted on $\partial\Omega,$  namely  the  local Brouwer degree
$$\mathrm{deg}\left(\nabla_{{\partial\Omega}}a ,  B(\xi_i,\rho) \cap \partial\Omega   ,0\right)\not=0\ \hbox{for}\ i=1,2$$  provided $\rho$ is small enough. Then by the product property we deduce
$$\mathrm{deg}\left(\left(\nabla_{{\partial\Omega}}a,\nabla_{{\partial\Omega}}a\right),  \left(B(\xi_1,\rho)\times  B(\xi_2,\rho)\right)\cap \left(\partial\Omega \times\partial\Omega\right) ,0\right) \not=0, $$ which implies together with
Lemma \eqref{cru}  that if
$\lambda$ is small enough there exists $(\xi_1^\lambda,\xi_2^\lambda)$ which approaches  $(\xi_1, \xi_2)$
as $\lambda$ goes to zero such that
$c_1=c_2=0.$ Therefore $v_\lambda(y)=V(y) + \phi(y)  $
$y\in\O_\lambda,$ turns out to be a solution to the  problem \eqref{problemarisc}. It is clear that
the scaled function $u_\lambda(x)=v_\lambda(x/\lambda),$ $x\in\O$ is
the solution to problem \eqref{gen2dpb}  which satisfies \eqref{solth} and  concentrates at the
points $\xi_1$ and $\xi_2$ as $\lambda$ goes to zero.
 \end{proof}

\bigskip

\end{document}